\documentclass[12pt]{article}
\usepackage{tikz}
\usetikzlibrary{arrows}
\usepackage[framemethod=tikz]{mdframed}
\usepackage{wrapfig}
\usepackage{amsmath,amssymb}
\usepackage{type1cm}
\usepackage{amsthm}
\usepackage{mathrsfs}
\usepackage{mathtools}
\usepackage{enumerate}
\usepackage[all]{xy} 
\usepackage[vcentermath,enableskew]{youngtab}
\usepackage{ytableau}
\usepackage{diagbox}
\AtBeginDocument{%
   \def\MR#1{}
}

%%%%%%%%%% margin
%%%%% horizontal
\setlength{\paperwidth}{210mm}
\setlength{\hoffset}{0pt}
\setlength{\oddsidemargin}{30mm}
\setlength{\textwidth}{\paperwidth}
\addtolength{\textwidth}{-2\oddsidemargin}
\setlength{\marginparwidth}{0pt}
\setlength{\marginparsep}{0pt}
\addtolength{\oddsidemargin}{-1in}
\setlength{\evensidemargin}{\oddsidemargin}
%%%%% vertical
\setlength{\topskip}{10pt plus 2pt}
\setlength{\voffset}{0pt}
\setlength{\topmargin}{30mm}
\setlength{\textheight}{\paperheight}
\addtolength{\textheight}{-2\topmargin}
\addtolength{\topmargin}{-\headheight}
\addtolength{\topmargin}{-\headsep}
\addtolength{\topmargin}{-1in}
\addtolength{\topmargin}{10mm}
\addtolength{\textheight}{-15mm}
\addtolength{\footskip}{15pt}
\addtolength{\skip\footins}{5pt}

\DeclareMathOperator{\GL}{GL}

\DeclareMathOperator{\ch}{ch}

\DeclareMathOperator{\Ad}{Ad}
\DeclareMathOperator{\Adm}{Adm}

\DeclareMathOperator{\Irr}{Irr}
\DeclareMathOperator{\pr}{pr}

\DeclareMathOperator{\pfn}{pfn}

\DeclareMathOperator{\en}{end}

\DeclareMathOperator{\supp}{supp}
\DeclareMathOperator{\de}{def}
\DeclareMathOperator{\LP}{LP}

\newcommand\F{\mathbb{F}}

\newcommand\Fq{\mathbb F_q}
\newcommand\aFq{\overline{\mathbb F}_q}

\newcommand\cB{\mathcal B}
\newcommand\cO{\mathcal O}

\newcommand\cG{\mathcal G}
\newcommand\cF{\mathcal F}
\newcommand\cT{\mathcal T}

\newcommand\Gm{\mathbb G_m}

\newcommand\N{\mathbb N}
\newcommand\Q{\mathbb Q}

\newcommand\A{\mathbb A}

\newcommand\G{\mathbb G}
\newcommand\J{\mathbb J}

\newcommand\Z{\mathbb Z}
\newcommand\tW{\tilde W}

\newcommand\SW{{^S\tilde W}}
\newcommand\SAdm{{^S\mathrm{Adm}}}

\newcommand\tS{\tilde S}
\newcommand\inv{\mathrm{inv}}

\newcommand\ld{\lambda}

\newcommand\unp{\underline p}

\newcommand\vp{\varpi}
\newcommand\Y{X_*(T)}

\newcommand\mn{\lfloor \frac{m}{n} \rfloor}
\newcommand\la{\langle}
\newcommand\ra{\rangle}
\newcommand\pc{\preceq}

\theoremstyle{definition}
\newtheorem{theo}{Theorem}[section]
\newtheorem{prop}[theo]{Proposition}

\newtheorem{lemm}[theo]{Lemma}
\newtheorem{coro}[theo]{Corollary}

\newtheorem{rema}[theo]{Remark}

\newtheorem{thm}{Theorem}[section]

\begin{document}
\title{Basic Loci of Positive Coxeter Type for $\GL_n$}
\author{Ryosuke Shimada}
\date{}
\maketitle

\begin{abstract}
Motivated by the problem of giving an explicit description of the basic locus in the reduction of Shimura varieties, G\"{o}rtz, He and Nie \cite{GHN22} studied the cases where the basic affine Deligne-Lusztig variety, which serves as its group-theoretic model, is a union of classical Deligne-Lusztig varieties associated to Coxeter elements.
In this paper, we study a natural generalization of this stratification in the case of $\GL_n$.
\end{abstract}

\section{Introduction}
\label{introduction}
The affine Deligne-Lusztig variety was introduced by Rapoport in \cite{Rapoport05}, which plays an important role in understanding geometric and arithmetic properties of Shimura varieties.
The uniformization theorem by Rapoport and Zink \cite{RZ96} allows us to describe the Newton strata of Shimura varieties in terms of Rapoport-Zink spaces, whose underlying spaces are special cases of affine Deligne-Lusztig varieties.

Let $F$ be a non-archimedean local field with finite residue field $\F_q$ of prime characteristic $p$, and let $L$ be the completion of the maximal unramified extension of $F$.
Let $\sigma$ denote the Frobenius automorphism of $L/F$.
Further, we write $\cO$ (resp.\ $\cO_F$) for the valuation ring of $L$ (resp.\ $F$).
Finally, we denote by $\vp$ a uniformizer of $F$ (and $L$) and by $v_L$ the valuation of $L$ such that $v_L(\vp)=1$.

Let $G$ be an unramified connected reductive group over $\cO_F$.
Let $B\subset G$ be a Borel subgroup and $T\subset B$ a maximal torus in $B$, both defined over $\cO_F$.
For $\mu,\mu'\in X_*(T)$ (resp.\ $X_*(T)_{\Q}$), we write $\mu'\pc \mu$ if $\mu-\mu'$ is a non-negative integral (resp.\ rational) linear combination of positive coroots.
For a cocharacter $\mu\in X_*(T)$, let $\vp^{\mu}$ be the image of $\vp\in \mathbb G_m(F)$ under the homomorphism $\mu\colon\mathbb G_m\rightarrow T$.

Set $K=G(\cO)$.
We fix a dominant cocharacter $\mu\in X_*(T)_+$ and $b\in G(L)$.
Then the affine Deligne-Lusztig variety $X_{\mu}(b)$ is the locally closed reduced $\aFq$-subscheme of the affine Grassmannian $\cG r=G(L)/K$ defined as
$$X_{\mu}(b)=\{xK\in \cG r\mid x^{-1}b\sigma(x)\in K\vp^{\mu}K\}.$$
The closed affine Deligne-Lusztig variety is the closed reduced $\aFq$-subscheme of $\cG r$ defined as
$$X_{\pc\mu}(b)=\bigcup_{\mu'\pc \mu}X_{\mu'}(b).$$
Both $X_{\mu}(b)$ and $X_{\pc\mu}(b)$ are locally of finite type in the equal characteristic case and locally perfectly of finite type in the mixed characteristic case (cf.\ \cite[Corollary 6.5]{HV11}, \cite[Lemma 1.1]{HV18}).
Also, the affine Deligne-Lusztig varieties $X_{\mu}(b)$ and $X_{\pc\mu}(b)$ carry a natural action (by left multiplication) by the $\sigma$-centralizer of $b$
$$\J=\J_b=\{g\in G(L)\mid g^{-1}b\sigma(g)=b\}.$$
(Since $b$ is usually fixed in the discussion, we mostly omit it from the notation.)

The geometric properties of affine Deligne-Lusztig varieties have been studied by many people.
For example, the non-emptiness criterion and the dimension formula are already known for the affine Deligne-Lusztig varieties in the affine Grassmannian (see \cite{Gashi10}, \cite{Viehmann06} and \cite{Hamacher15}).
Let $B(G)$ denote the set of $\sigma$-conjugacy classes of $G(L)$. 
Thanks to Kottwitz \cite{Kottwitz85}, a $\sigma$-conjugacy class $[b]\in B(G)$ is uniquely determined by two invariants: the Kottwitz point $\kappa(b)\in \pi_1(G)/((1-\sigma)\pi_1(G))$ and the Newton point $\nu_b\in X_*(T)_{\Q,+}$.
Set $B(G,\mu)=\{[b]\in B(G)\mid \kappa(b)=\kappa(\vp^\mu), \nu_b\pc \mu^{\diamond}\}$, where $\mu^{\diamond}$ denotes the $\sigma$-average of $\mu$.
Then $X_\mu(b)\neq \emptyset$ if and only if $[b]\in B(G,\mu)$.
If this is the case, then we have
$$\dim X_\mu(b)=\la\rho, \mu-\nu_b\ra-\frac{1}{2}\de(b),$$
where $\rho$ is the half sum of positive roots and $\de(b)$ is the defect of $b$.
It is also known that (closed) affine Deligne-Lusztig varieties are equidimensional (cf.\ \cite{HV12}, \cite{Takaya22}).
Note that the dimension formula and the equidimensionality imply that $X_{\pc\mu}(b)$ is actually the closure of $X_{\mu}(b)$ in $\cG r$ (cf.\ \cite[Corollary 1.3]{HV11}).

Via the relationship to Shimura varieties, or more directly to Rapoport-Zink spaces, the results on the geometry of affine Deligne-Lusztig varieties have numerous applications to number theory (e.g., the Kudla-Rapoport program \cite{KR11}, Zhang's Arithmetic Fundamental Lemma \cite{Zhang12}, $\ldots$).
Many of these applications make use of the nice cases where the basic locus of $X_{\pc\mu}(b)$ (or the special fiber of the corresponding Shimura variety) admits a simple description.
The fully Hodge-Newton decomposable case, introduced by G\"{o}rtz, He and Nie in \cite{GHN19}, is one of such cases.
They proved that if $(G, \mu)$  is fully Hodge-Newton decomposable, then $X_{\pc \mu}(\tau_\mu)$ is naturally a union of (classical) Deligne-Lusztig varieties, where $\tau_\mu$ is (a representative in $G(L)$ of) a length $0$ element in the Iwahori-Weyl group $\tW$ of $T$ whose $\sigma$-conjugacy class in $G(L)$ is the unique basic element in $B(G,\mu)$.
This stratification is the so-called weak Bruhat-Tits stratification, a stratification indexed in terms of the Bruhat-Tits building of $\J$ (which exists only in the fully Hodge-Newton decomposable case).
The case of Coxeter type is a special case of this case such that each Deligne-Lusztig variety appearing in this stratification is of Coxeter type (cf.\ \cite[\S 2.3]{GHN22}).
In this case, we drop the ``weak'' above.
For example, the cases of Coxeter type include the case of certain unitary groups of signature $(1, n-1)$ studied in \cite{VW11} by Vollaard-Wedhorn, which has been used in \cite{KR11} and \cite{Zhang12}.

To give a conceptual way to explain the relationship between the geometry of affine Deligne-Lusztig varieties and the Bruhat-Tits building of $\J$ indicated by above examples, Chen and Viehmann \cite{CV18} introduced the $\J$-stratification.
The $\J$-strata are locally closed subsets of $\cG r$.
By intersecting each $\J$-stratum with $X_{\pc\mu}(\tau_\mu)$, we obtain the $\J$-stratification of $X_{\pc\mu}(\tau_\mu)$ (see \S\ref{J-str} for details).
In \cite{Gortz19}, G\"{o}rtz showed that the Bruhat-Tits stratification coincides with the $\J$-stratification.
In fact the Bruhat-Tits stratification is a refinement of the Ekedahl-Oort stratification (see \S\ref{ADLV} for the latter).
So the $\J$-stratification is also a refinement of the Ekedahl-Oort stratification when $(G,\mu)$ is of Coxeter type.
In \cite{Shimada4}, the author classified the cases where the $\J$-stratification of $X_{\pc\mu}(\tau_\mu)$ is a refinement of the Ekedahl-Oort stratification under the assumption that $G=\GL_n$ and $b$ is superbasic.
As a result, it turns out that such cases are characterized by ``positive Coxeter'' condition (which works for general $G$, see \S\ref{LP}).
In this paper, we study this condition for $G=\GL_n$.
We also study the geometric structure of $X_{\pc\mu}(\tau_\mu)$ for $(\GL_n,\mu)$ of positive Coxeter type.
The main results of this paper are summarized below.

\begin{thm}[See Theorem \ref{classification theorem} and Theorem \ref{geometric structure}]
\label{main thm}
Let $G=\GL_n$ and let $\mu\in \Y_+$.
Then the following assertions are equivalent.
\begin{enumerate}[(i)]
\item The pair $(\GL_n,\mu)$ is of positive Coxeter type.
\item The cocharacter $\mu$ is central or one of the following forms modulo $\Z\omega_n$:
\begin{align*}
&\omega_1,\quad \omega_{n-1},\ &(n\geq 1),\\
&\omega_1+\omega_{n-1},\quad \omega_2,\quad 2\omega_1,\quad \omega_{n-2},\quad 2\omega_{n-1},\\
& \omega_2+\omega_{n-1},\quad 2\omega_1+\omega_{n-1},\quad \omega_1+\omega_{n-2},\quad\omega_1+2\omega_{n-1},\ &(n\geq 3),\\
&\omega_3,\quad\omega_{n-3},\ &(n=6,7,8),\\
&3\omega_1,\quad 3\omega_{n-1},\ &(n=3,4,5),\\
&\omega_1+\omega_2,\quad\omega_3+\omega_4,\  &(n=5),\\
&4\omega_1,\quad \omega_1+3\omega_2,\quad 4\omega_2,\quad 3\omega_1+\omega_2, &(n=3),\\
&m\omega_1\ \text{with $m\in \Z_{>0}$},  &(n=2).
\end{align*}
\end{enumerate}
Here $\omega_k$ denotes the cocharacter of the form $(1,\ldots,1,0,\ldots,0)$ in which $1$ is repeated $k$ times.
If $\mu$ satisfies the equivalent conditions above, then $X_{\pc\mu}(\tau_\mu)$ is universally homeomorphic to a union of the product of a Deligne-Lusztig variety of Coxeter type and a finite-dimensional affine space.
Moreover, this stratification coincides with the $\J_{\tau_\mu}$-stratification of $X_{\pc\mu}(\tau_\mu)$.
\end{thm}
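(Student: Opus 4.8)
The argument naturally splits into the combinatorial classification $(\mathrm{i})\Leftrightarrow(\mathrm{ii})$ and, granting the positive Coxeter condition, the geometric description of $X_{\pc\mu}(\tau_\mu)$ together with the identification of the resulting decomposition with the $\J_{\tau_\mu}$-stratification. I would begin the classification by normalizing $\mu$ modulo the central cocharacter $\Z\omega_n$, which affects neither the notion of positive Coxeter type from \S\ref{LP} nor $X_{\pc\mu}(\tau_\mu)$ up to universal homeomorphism, and then by rewriting ``positive Coxeter type'' (see \S\ref{LP} for the precise formulation) as an explicit condition on the partition underlying $\mu$: it requires that for each element $w$ of $\Adm(\mu)$ indexing a nonempty Ekedahl--Oort stratum of $X_{\pc\mu}(\tau_\mu)$, the set $\LP(w)$ of length-positive directions is a singleton and the attached finite Weyl group element is a $\sigma$-Coxeter element of a standard Levi subgroup, following the combinatorial framework of \cite{Shimada4}. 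That the cocharacters listed in (ii) have this property is a finite check: uniform in $n$ for the infinite families, and by direct computation in $\tW$ for the sporadic entries. The converse is the substantive part: I would prove a monotonicity lemma to the effect that enlarging $\mu$ -- adding a box to the underlying partition, or increasing $n$ -- forces, for some admissible $w$, either a second length-positive direction or a non-Coxeter finite part, thereby bounding both $n$ and the size of $\mu$ for which positive Coxeter type can hold and reducing the problem to a finite, explicit search matched against (ii). I expect the main obstacle here to be making the bounding lemma sharp enough that the exceptional range ($n\le 8$ with small $\mu$) is provably exhausted rather than merely searched; the interplay between $\LP$ and the admissible set for $\GL_n$ is subtle near the boundary.

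For the geometric structure I would use the Ekedahl--Oort stratification of $X_{\pc\mu}(\tau_\mu)$ (\S\ref{ADLV}) and analyze each stratum by the Deligne--Lusztig reduction method in the manner of \cite{GHN19,GHN22}. Fixing a reduced word for a representative in $\Adm(\mu)$ of the indexing element $w\in\SW$ and removing simple reflections one at a time, the positive Coxeter hypothesis guarantees that each step is a length-preserving conjugation, a shortening that eventually exhibits a $\sigma$-Coxeter ``core'', or a $\Ga$- or $\Gm$-fibration which in the relevant configuration integrates to an $\A^1$-bundle; running this to the end shows that each Ekedahl--Oort stratum is universally homeomorphic to the product of a Deligne--Lusztig variety of Coxeter type for a standard Levi and a finite-dimensional affine space, the affine factor recording exactly the simple reflections by which $w$ exceeds a Coxeter element. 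Assembling the strata and verifying that the closure relations among them follow the same pattern as in the Coxeter case -- pieces glued along boundary Deligne--Lusztig varieties -- yields the presentation of $X_{\pc\mu}(\tau_\mu)$ as a union of such products.

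Finally, to identify this decomposition with the $\J_{\tau_\mu}$-stratification I would combine the result of \cite{Shimada4} -- that precisely under the positive Coxeter condition the $\J$-stratification of $X_{\pc\mu}(\tau_\mu)$ refines the Ekedahl--Oort stratification -- with an explicit description, obtained from the lattice model of $\cG r$ for $\GL_n$, of the $\J_{\tau_\mu}$-orbits inside a given Ekedahl--Oort stratum. The goal is to show that the affine directions produced by the Deligne--Lusztig reduction are swept out by a single $\J_{\tau_\mu}$-orbit while the transverse Coxeter-variety factor is cut by the $\J$-strata exactly as in G\"{o}rtz's identification of the Bruhat--Tits and $\J$-stratifications in the Coxeter case \cite{Gortz19}; matching the two pictures gives the coincidence. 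Pinning down the $\J_{\tau_\mu}$-action on the affine directions precisely enough to see that it neither refines nor coarsens the stratification by the $\mathrm{DLV}\times\A^N$ pieces is, together with the bounding lemma, the crux of the argument, and I expect to handle it through explicit coordinates on $\cG r$ coming from the superbasic model.
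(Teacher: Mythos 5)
There are two genuine gaps. First, your reformulation of the positive Coxeter condition is not the one the paper uses: $(G,\mu)$ is of positive Coxeter type when every $w\in\SAdm(\mu)_0$ has positive Coxeter part, i.e.\ there \emph{exists} $v\in\LP(w)$ with $v^{-1}p(w)v$ a Coxeter element. Requiring $\LP(w)$ to be a singleton is a strictly different (and stronger) condition, and running the classification against it would produce a different list. The correct mechanism for the converse direction is also more concrete than a ``monotonicity lemma'': one uses $\Adm(\mu')\subseteq\Adm(\mu)$ for $\mu'\pc\mu$ and exhibits, for a short list of minimal non-listed cocharacters (e.g.\ $\omega_2+\omega_{n-2}$ for $n\geq 4$, and a few sporadic ones for $n=3,4,6$), an explicit $w\in\SAdm(\mu')_0$ with $X_w(b)\neq\emptyset$ such that $v^{-1}p(w)v$ is non-Coxeter for \emph{every} $v\in\LP(w)$; this is checked via Lemma \ref{LPr} and the non-emptiness criterion of Theorem \ref{empty}, not by bounding $n$ and searching.

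Second, your identification of the decomposition with the $\J_{\tau_\mu}$-stratification leans on \cite{Shimada4}, but that result is proved only for \emph{superbasic} $b$, where the $\J$-stratification admits the semi-module description $\bigsqcup_\ld I\vp^\ld K/K$. Several cases in the list are not superbasic: $\omega_2$ and $2\omega_1$ for even $n$, $\omega_3$ for $n=6$, $3\omega_1$ for $n=3$, and $m\omega_1$ for $n=2$ with $m$ even. For these the paper must prove directly that each piece $j\pi(Y(w))$ is a $\J$-stratum, by showing $Y(w)$ lands in a single Iwahori double coset $IvI/I$ with $\ell(v)$ additive along the reduction path (via Lusztig's result that a Coxeter Deligne--Lusztig variety lies in one Bruhat cell, plus length computations in $W_a^{\tau\sigma}$), and then computing $\inv(j,-)$ on $Y(w)$ via the ``gate'' property of buildings as in \cite{Gortz19}. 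None of this is supplied by your plan, and it is where most of the paper's work lies; the lattice-model coordinates you invoke at the end would only recover the superbasic cases already handled in \cite{Shimada4}.
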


The condition (i) is actually a natural generalization of Coxeter type.
The classification (ii) tells us that it contains many new cases not of Coxeter type (see \cite[Theorem 1.4]{GHN22} for the classification of Coxeter type).
The description of $X_{\pc\mu}(\tau_\mu)$ for $(\GL_n,\mu)$ of positive Coxeter type can be also considered as a generalization of the Bruhat-Tits stratification.
Further, we will prove that this stratification satisfies a nice closure relation in many cases (including all minuscule cases).
This closure relation again is a generalization of the closure relation which the Bruhat-Tits stratification satisfies (see \S\ref{J-str}).

Recently, Chen-Tong \cite{CT22} introduced the weak full Hodge-Newton decomposability in the context of $p$-adic Hodge theory and studied it under the minuscule condition.
They also classified the weakly fully Hodge-Newton decomposable cases.
The classification tells us that for minuscule $\mu$, $(\GL_n,\mu)$ is weakly fully Hodge-Newton decomposable if and only if $\tau_\mu$ is superbasic or $(\GL_n,\mu)$ satisfies the equivalent conditions in Theorem \ref{main thm} (see \cite[Remark 2.13]{CT22}).
In \cite[Remark 2.16]{CT22}, they pointed out that it will be an interesting question to investigate the basic affine Deligne-Lusztig varieties associated to a weakly fully Hodge-Newton decomposable pair.
The main result of this paper contains the answer to this question.
In particular, the case of $(\GL_n, \omega_2)$ is a vast generalization of the case of $(\GL_4, \omega_2)$ in \cite{Fox22} (up to perfection).

More recently, Schremmer informed the author that there is an upcoming work by He, Schremmer and Viehmann which also aims at generalizing the fully Hodge-Newton decomposable case.
For a pair $(G,\mu)$, they define a non-negative rational number $\mathrm{depth}(G,\mu)$.
Then it is known that $(G,\mu)$ is fully Hodge-Newton decomposable if and only if $\mathrm{depth}(G,\mu)\le 1$ (cf.\ \cite[Definition 3.2]{GHN19}).
They classified the cases where $1<\mathrm{depth}(G,\mu)<2$.
The classification can be reduced to the case where $G$ is simple.
If $G$ is a simple group with $1<\mathrm{depth}(G,\mu)<2$ for some $\mu$, then $G$ is a split group over $F$ of Dynkin type $A_{n-1}$, $C_3$ or $D_5$.
If $G=\GL_n$, then their case is contained in our case (both cases completely coincide if $n\geq 6$).
They also studied some geometric properties of the corresponding Ekedahl-Oort strata in a different point of view from this paper.

We focus on the case of $\GL_n$ in this paper.
However, it is natural to expect that the same description as our result would hold for general $(G,\mu)$ of positive Coxeter type.
We hope to generalize our approach for $\GL_n$ in the future to study these cases.

The paper is organized as follows.
In \S\ref{preliminaries} we introduce the affine Deligne-Lusztig variety and some relevant notion including the length positive elements and the $\J$-stratification.
In \S\ref{classification} we summarize the main results.
We first prove that the cocharacters outside the list in Theorem \ref{main thm} are not of positive Coxeter type.
After that, we finish the proof of Theorem \ref{main thm} by case-by-case analysis starting from \S\ref{omega2 2omega1}.
The main ingredients here are the techniques in \cite{Gortz19} by G\"{o}rtz and some facts on the usual Grassmannian.

\textbf{Acknowledgments:}
The author would like to thank Ulrich G\"{o}rtz for helpful discussion and Felix Schremmer for telling him about their recent work.
The author is also grateful to his advisor Yoichi Mieda for his constant support.

This work was supported by the WINGS-FMSP program at the Graduate School of Mathematical Sciences, the University of Tokyo. 
This work was also supported by JSPS KAKENHI Grant number JP21J22427.
Part of this paper was written during a stay at Universit\"{a}t Duisburg-Essen which was supported by the JSPS Overseas Challenge Program for Young Researchers.
He would like to thank the university and the host Ulrich G\"{o}rtz for their hospitality.

\section{Preliminaries}
\label{preliminaries}
Keep the notation in \S\ref{introduction}.
From now on, we sometimes drop the adjective ``perfect'' for notational convenience even in the mixed characteristic case.
Also $\cong$ always means a universal homeomorphism.
\subsection{Notation}
\label{notation}
Let $\Phi=\Phi(G,T)$ denote the set of roots of $T$ in $G$.
We denote by $\Phi_+$ (resp.\ $\Phi_-$) the set of positive (resp.\ negative) roots distinguished by $B$.
Let $\Delta$ be the set of simple roots and $\Delta^\vee$ be the corresponding set of simple coroots.
Let $X_*(T)$ be the set of cocharacters, and let $X_*(T)_+$ be the set of dominant cocharacters.

The Iwahori-Weyl group $\tW$ is defined as the quotient $N_{G(L)}T(L)/T(\cO)$.
This can be identified with the semi-direct product $W_0\ltimes X_{*}(T)$, where $W_0$ is the finite Weyl group of $G$.
We denote the projection $\tW\rightarrow W_0$ by $p$.
Let $S\subset W_0$ denote the subset of simple reflections, and let $\tS\subset \tW$ denote the subset of simple affine reflections.
We often identify $\Delta$ and $S$.
The affine Weyl group $W_a$ is the subgroup of $\tW$ generated by $\tS$.
Then we can write the Iwahori-Weyl group as a semi-direct product $\tW=W_a\rtimes \Omega$, where $\Omega\subset \tW$ is the subgroup of length $0$ elements.
Moreover, $(W_a, \tS)$ is a Coxeter system.
We denote by $\le$ the Bruhat order on $\tW$.
For any $J\subseteq \tS$, let $^J\tW$ be the set of minimal length elements for the cosets in $W_J\backslash \tW$, where $W_J$ denotes the subgroup of $\tW$ generated by $J$.
We also have a length function $\ell\colon \tW\rightarrow \Z_{\geq 0}$ given as
$$\ell(w_0\vp^{\lambda})=\sum_{\alpha\in \Phi_+, w_0\alpha\in \Phi_-}|\langle \alpha, \lambda\rangle+1|+\sum_{\alpha\in \Phi_+, w_0\alpha\in \Phi_+}|\langle \alpha, \lambda\rangle|,$$
where $w_0\in W_0$ and $\lambda\in \Y$.

For $w\in W_a$, we denote by $\supp(w)\subseteq \tS$ the set of simple affine reflections occurring in every (equivalently, some) reduced expression of $w$.
Note that $\tau\in \Omega$ acts on $\tS$ by conjugation.
We define the $\sigma$-support $\supp_\sigma(w\tau)$ of $w\tau$ as the smallest $\tau\sigma$-stable subset of $\tS$.
We call an element $w\tau\in W_a\tau$ a $\sigma$-Coxeter element if exactly one simple reflection from each $\tau\sigma$-orbit on $\supp_\sigma(w\tau)$ occurs in every (equivalently, any) reduced expression of $w$.

For $\tau\in \Omega$, let $\Sigma$ be an orbit of $\tau\sigma$ on $\tS$ and suppose that $W_\Sigma$ is finite.
We denote by $s_\Sigma$ the unique longest element of $W_\Sigma$.
Then $s_\Sigma$ is fixed by $\tau\sigma$.
The fixed point group $W_a^{\tau \sigma}\coloneqq \{w\in W_a\mid \tau\sigma(w)\tau^{-1}=w\}$ is the Weyl group whose simple reflections are the elements $s_\Sigma$ such that $\Sigma$ is a $\tau\sigma$-orbit on $\tS$ with $W_\Sigma$ finite.
%We write $\ell^{\tau \sigma}(w)$ for the length of $w$ as element of $W_a^{\tau\sigma}$.
For any reduced decomposition $w=s_{\Sigma_1}\cdots s_{\Sigma_{r}}$ as an element of $W^{\tau\sigma}_a$, we have $$\ell(w)=\ell(s_{\Sigma_1})+\cdots +\ell(s_{\Sigma_r}).$$
See \cite{Steinberg68} or \cite[\S2]{KR00} for these facts.

For $w,w'\in \tW$ and $s\in \tS$, we write $w\xrightarrow{s}_\sigma w'$ if $w'=sw\sigma(s)$ and $\ell(w')\le \ell(w)$.
We write $w\rightarrow_\sigma w'$ if there is a sequence $w=w_0,w_1,\ldots, w_k=w'$ of elements in $\tW$ such that for any $i$, $w_{i-1}\xrightarrow{s_i}_\sigma w_i$ for some $s_i\in \tS$.
If $w\rightarrow_\sigma w'$ and $w'\rightarrow_\sigma w$, we write $w\approx_\sigma w'$.

For $\alpha\in \Phi$, let $U_\alpha\subseteq G$ denote the corresponding root subgroup.
We set $$I=T(\cO)\prod_{\alpha\in \Phi_+}U_{\alpha}(\vp\cO)\prod_{\beta\in \Phi_-}U_{\beta}(\cO)\subseteq G(L),$$
which is called the standard Iwahori subgroup associated to the triple $T\subset B\subset G$.

In the case $G=\GL_n$, we will use the following description.
Let $\chi_{ij}$ be the character $T\rightarrow \Gm$ defined by $\mathrm{diag}(t_1,t_2,\ldots, t_n)\mapsto t_i{t_j}^{-1}$.
Then we have $\Phi=\{\chi_{ij}\mid i\neq j\}$, $\Phi_+=\{\chi_{ij}\mid i< j\}$, $\Phi_-=\{\chi_{ij}\mid i> j\}$ and $\Delta=\{\chi_{i,i+1}\mid 1\le i <n\}$.
Through the isomorphism $X_*(T)\cong \Z^n$, ${X_*(T)}_+$ can be identified with the set $\{(m_1,\cdots, m_n)\in \Z^n\mid m_1\geq \cdots \geq m_n\}$.
Let us write $s_1=(1\ 2), s_2=(2\ 3), \ldots, s_{n-1}=(n-1\ n)$.
Set $s_0=\vp^{\chi_{1,n}^{\vee}}(1\ n)$, where $\chi_{1,n}$ is the unique highest root.
Then $S=\{s_1,s_2,\ldots, s_{n-1}\}$ and $\tS=S\cup\{s_0\}$.
The Iwahori subgroup $I\subset K$ is the inverse image of the lower triangular matrices under the projection $G(\cO)\rightarrow G(\aFq),\  \vp\mapsto 0$.
Set $\tau={\begin{pmatrix}
0 & \vp \\
1_{n-1} & 0\\
\end{pmatrix}}$.
We often regard $\tau$ as an element of $\tW$, which is a generator of $\Omega\cong \Z$.
Note that $b\in \GL_n(L)$ is superbasic if and only if $[b]=[\tau^m]$ in $B(\GL_n)$ for some $m$ coprime to $n$.

\subsection{Affine Deligne-Lusztig Varieties}
\label{ADLV}
For $w\in \tW$ and $b\in G(L)$, the affine Deligne-Lusztig variety $X_w(b)$ in the affine flag variety $\cF l=\cF l_G=G(L)/I$ is defined as
$$X_w(b)=\{xI\in G(L)/I\mid x^{-1}b\sigma(x)\in IwI\}.$$
For $\mu\in \Y_+$ and $b\in G(L)$, the affine Deligne-Lusztig variety $X_{\mu}(b)$ in the affine Grassmannian $\cG r=\cG r_G=G(L)/K$ is defined as
$$X_{\mu}(b)=\{xK\in \cG r\mid x^{-1}b\sigma(x)\in K\vp^{\mu}K\}.$$
The closed affine Deligne-Lusztig variety is the closed reduced $\aFq$-subscheme of $\cG r$ defined as
$$X_{\pc\mu}(b)=\bigcup_{\mu'\pc \mu}X_{\mu'}(b).$$
Left multiplication by $g^{-1}\in G(L)$ induces an isomorphism between $X_\mu(b)$ and $X_\mu(g^{-1}b\sigma(g))$.
Thus the isomorphism class of the affine Deligne-Lusztig variety only depends on the $\sigma$-conjugacy class of $b$.
Moreover, we have $X_\mu(b)=X_{\mu+\ld}(\vp^{\ld}b)$ for each central $\ld \in \Y$.

The admissible subset of $\tW$ associated to $\mu$ is defined as
$$\Adm(\mu)=\{w\in \tW\mid w\le \vp^{w_0\mu}\ \text{for some}\ w_0\in W_0\}.$$
Note that $\Adm(\mu')\subseteq \Adm(\mu)$ if $\mu'\pc \mu$ (see \cite[Lemma 4.5]{Haines01}).
Set $\SAdm(\mu)=\Adm(\mu)\cap \SW$.
Then, by \cite[Theorem 3.2.1]{GH15} (see also \cite[\S2.5]{GHR20}), we have
$$X_{\pc\mu}(b)=\bigsqcup_{w\in\SAdm(\mu)}\pi(X_w(b)),$$
where $\pi\colon G(L)/I\rightarrow G(L)/K$ is the projection.
This is the so-called Ekedahl-Oort stratification.
In the sequel, we set $\SAdm(\mu)_0\coloneqq\{w\in \SAdm(\mu)\mid X_w(\tau_\mu)\neq \emptyset\}$, where $\tau_\mu\in \Omega$ such that $[\tau_\mu]\in B(G,\mu)$ is the unique basic element.

The following proposition is the key to the explicit description of the affine Deligne-Lusztig varieties.
\begin{prop}
\label{spherical}
Let $\tau\in \Omega$.
Let $w\in W_a\tau$ such that $W_{\supp_\sigma(w)}$ is finite.
Then $$X_w(\tau)=\bigsqcup_{j\in \J_\tau/\J_\tau\cap P_{\supp_\sigma(w)}} jY(w),$$
where $P_{\supp_\sigma(w)}$ is the parahoric subgroup corresponding to $\supp_\sigma(w)$ and $Y(w)=\{gI\in P_{\supp_\sigma(w)}/I\mid g^{-1}\tau \sigma(g)\in IwI\}$ is a classical Deligne-Lusztig variety in the finite-dimensional flag variety $P_{\supp_\sigma(w)}/I$.
\end{prop}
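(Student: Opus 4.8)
The plan is to prove the asserted equality by establishing, on one hand, that $\bigsqcup_j jY(w)$ is a disjoint union contained in $X_w(\tau)$, and on the other hand the reverse inclusion. Throughout write $J=\supp_\sigma(w)$, $P=P_J$, and $u=w\tau^{-1}\in W_a$; then $\supp(u)\subseteq J$, so $u\in W_J$, and since $\tau$ normalizes $I$ one has $IwI=IuI\,\tau\subseteq P\tau$. Because $J$ is $\tau\sigma$-stable by definition, the twisted Frobenius $\sigma_\tau:=\mathrm{Ad}(\tau)\circ\sigma$, i.e.\ $g\mapsto\tau\sigma(g)\tau^{-1}$, satisfies $\sigma_\tau(P)=\tau\,\sigma(P_J)\,\tau^{-1}=P_{(\tau\sigma)(J)}=P$, so $\sigma_\tau$ restricts to an endomorphism of $P$; moreover $\J_\tau=\{g\mid g^{-1}\tau\sigma(g)=\tau\}=G(L)^{\sigma_\tau}$, and $\J_\tau\cap P$ is a compact open subgroup, so the index set $\J_\tau/(\J_\tau\cap P)$ is discrete. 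With this set-up the easy half is routine: if $gI\in Y(w)$ then $g^{-1}\tau\sigma(g)\in IwI$, so $Y(w)\subseteq X_w(\tau)$; for $j\in\J_\tau\cap P$ one computes $(jg)^{-1}\tau\sigma(jg)=g^{-1}(j^{-1}\tau\sigma(j))\sigma(g)=g^{-1}\tau\sigma(g)$, so $Y(w)$ is left $(\J_\tau\cap P)$-stable and each $jY(w)\subseteq X_w(\tau)$ depends only on the coset $j(\J_\tau\cap P)$; and if $jgI=j'g'I$ with $g,g'\in P$ and $j,j'\in\J_\tau$, then $(j')^{-1}j=g'ig^{-1}\in P$ for some $i\in I$, hence $(j')^{-1}j\in\J_\tau\cap P$, which gives the disjointness.

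The substance is the reverse inclusion. Let $xI\in X_w(\tau)$, so $x^{-1}\tau\sigma(x)\in IwI\subseteq P\tau$, hence $x^{-1}\sigma_\tau(x)=x^{-1}\tau\sigma(x)\tau^{-1}\in P$; since $\sigma_\tau(P)=P$ this says precisely that the coset $xP$ is a $\sigma_\tau$-fixed point of the partial affine flag variety $G(L)/P$. The claim that drives everything is that such a fixed point necessarily lies in the $\J_\tau$-orbit of the base point: writing $x^{-1}\sigma_\tau(x)=p\in P$, it suffices to produce $q\in P$ with $p=q\,\sigma_\tau(q)^{-1}$, for then $xq\in G(L)^{\sigma_\tau}=\J_\tau$ while $xq\,P=xP$, so with $j:=xq$ we get $g:=j^{-1}x=q^{-1}\in P$ and $g^{-1}\tau\sigma(g)=x^{-1}\bigl(j\tau\sigma(j)^{-1}\bigr)\sigma(x)=x^{-1}\tau\sigma(x)\in IwI$ (using $j\tau\sigma(j)^{-1}=\tau$, valid for $j\in\J_\tau$), i.e.\ $gI\in Y(w)$ and $xI\in jY(w)$. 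The existence of $q$ is Lang--Steinberg for the parahoric $P$: since $W_J$ is finite, $P$ has pro-unipotent radical $P^+$ with reductive quotient $\bar P=P/P^+$ connected over $\aFq$, and $\sigma_\tau$ preserves $P^+$ and induces on $\bar P$ a Steinberg endomorphism (it has finite fixed-point group, being $\sigma$ composed with the finite-order twist coming from $\tau\sigma$). Classical Lang--Steinberg makes $\bar q\mapsto\bar q\,\sigma_\tau(\bar q)^{-1}$ surjective on $\bar P$; surjectivity of $q\mapsto q\,\sigma_\tau(q)^{-1}$ on $P^+$ follows by successive approximation along the congruence filtration of $P^+$, whose graded pieces are vector groups on which the map is Artin--Schreier surjective, together with completeness of $P^+$; combining the two via $1\to P^+\to P\to\bar P\to1$ yields the desired $q$.

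It remains to identify $Y(w)$. Under $P/I\cong\bar P/\bar B$, where $\bar B$ is the image of $I$ (a Borel of $\bar P$), the defining condition $g^{-1}\tau\sigma(g)\in IwI$ of $Y(w)$ becomes, after dividing by $\tau$ on the right, $\bar g^{-1}\sigma_\tau(\bar g)\in\bar B\bar u\bar B$ with $\bar u$ the image of $u$ in the Weyl group of $\bar P$; this is exactly the classical Deligne--Lusztig variety attached to the pair $(\bar P,\sigma_\tau)$ and the element $\bar u$, sitting inside the finite-dimensional flag variety $P/I$. Hence $X_w(\tau)=\bigsqcup_{j\in\J_\tau/(\J_\tau\cap P)}jY(w)$ as (perfect) schemes, the index set being discrete. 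The step I expect to require the most care is the Lang--Steinberg argument for the pro-algebraic group $P$ --- namely controlling the pro-unipotent radical by a completeness/successive-approximation argument and verifying that $\sigma_\tau$ genuinely is a Steinberg endomorphism of $\bar P$ --- together with the bookkeeping that $\sigma_\tau$ preserves $P$, which is precisely the $\tau\sigma$-stability of $J=\supp_\sigma(w)$; the remaining manipulations are formal.
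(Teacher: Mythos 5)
Your proof is correct; note that the paper itself gives no argument here but simply cites \cite[Proposition 2.2.1]{GH15}, and your write-up reconstructs exactly the standard proof of that result: the easy inclusion and disjointness by the coset computation, the reverse inclusion via Lang--Steinberg for the parahoric $P_{\supp_\sigma(w)}$ with twisted Frobenius $\sigma_\tau$ (reductive quotient plus successive approximation on the pro-unipotent radical), and the identification of $Y(w)$ with a classical Deligne--Lusztig variety in $\bar P/\bar B$. No gaps worth flagging.
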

\begin{proof}
See \cite[Proposition 2.2.1]{GH15}.
\end{proof}

\subsection{The $\J$-stratification}
\label{J-str}
For any $g,h\in G(L)$, let $\inv(g,h)$ (resp.\ $\inv_K(g,h)$) denote the relative position, i.e., the unique element in $\tW$ (resp.\ $\Y_+$) such that $g^{-1}h\in I\inv(g,h)I$ (resp.\ $K\vp^{\inv_K(g,h)} K$).
By definition, two elements $gI,hI\in \cF l$ (resp.\ $gK,hK\in \cG r$) lie in the same $\J$-stratum if and only if for all $j\in \J$, $\inv(j,g)=\inv(j,h)$ (resp.\ $\inv_K(j,g)=\inv_K(j,h)$).
Clearly, this does not depend on the choice of $g,h$.
By \cite[Theorem 2.10]{Gortz19}, the $\J$-strata are locally closed in $\cF l$ (resp.\ $\cG r$).
By intersecting each $\J$-stratum with (closed) affine Deligne-Lusztig varieties, we obtain the $\J$-stratification of them.

As explained in \cite[Remark 2.1]{CV18}, the $\J$-stratification heavily depends on the choice of $b$ in its $\sigma$-conjugacy class.
So we need to fix a specific representative to compare the $\J$-stratification on $X_{\pc\mu}(b)$ to another stratification.
It is pointed out that loc. cit if $b$ is basic, then a reasonable choice is the unique length $0$ element $\tau_\mu$ in $B(G,\mu)$.
Also, for any $w\in \tW$, the $J_{\dot w}(F)$-stratification is independent of the choice of lift $\dot w$ in $G(L)$ (cf.\  \cite[Lemma 2.5]{Gortz19}).
In the rest of this subsection, we fix $b=\tau_\mu$ and hence $\J=\J_{\tau_\mu}$.
In the case of $G=\GL_n$, we set $\J^0=\{j\in \J\mid \kappa(j)=0\}$.

In general, the $\J$-stratification is too complicated to study.
However, there are several cases such that the $\J$-stratification coincides with other well-known stratifications.
Here we briefly recall two of such cases.

In the case where $G=\GL_n$ and $b=\tau^m$ with $m$ coprime to $n$, there is a group-theoretic way to describe the $\J$-stratification, which we will call the semi-module stratification.
Indeed, by \cite[Remark 3.1 \& Proposition 3.4]{CV18}, the $\J$-stratification on $\cG r$ coincides with the stratification
$$\cG r=\bigsqcup_{\ld\in \Y} I \vp^\ld K/K.$$
So in this case, each $\J$-stratum of $X_\mu(b)$ (resp.\ $X_{\pc\mu}(b)$) coincides with $X_\mu^\ld(b)$ (resp.\ $X_{\pc\mu}^\ld(b)$) for some $\ld\in \Y$, where $X_\mu^\ld(b)=X_\mu(b)\cap I \vp^\ld K/K$ (resp.\ $X_{\pc\mu}^\ld(b)=X_{\pc\mu}(b)\cap I \vp^\ld K/K$).
Note that $\tau X_\mu^\ld(b)=X_\mu^{\tau\ld}(b)$, $\J^0=\J \cap K=\J \cap I$ and  $\J/\J^0=\{\tau^k \J^0\mid k\in \Z\}$.
Thus $$\J X_\mu^\ld(b)=\bigsqcup_{k\in \Z} X_\mu^{\tau^k\ld}(b)\quad \text{and}\quad \J X_{\pc\mu}^\ld(b)=\bigsqcup_{k\in \Z} X_{\pc\mu}^{\tau^k\ld}(b).$$

Following \cite[\S 2.3]{GHN22}, we say that $(G,\mu)$ is of Coxeter type if $$\SAdm(\mu)_0=\{w\in \SAdm(\mu)\mid \text{$W_{\supp_\sigma(w)}$ is finite, and $w$ is $\sigma$-Coxeter in $W_{\supp_\sigma(w)}$}\}.$$
If $(G,\mu)$ is of Coxeter type, then for each $w\in \SAdm(\mu)_0$, we have
$$\pi(X_w(\tau_\mu))=\bigsqcup_{j\in \J/\J\cap P_w}j \pi(Y(w)),$$
where $Y(w)$ is the same as in Proposition \ref{spherical}, and $P_w$ is the parahoric subgroup corresponding to $\supp(w)\cup \max \{ J\subseteq S\mid \Ad(w)(\sigma(J))=J\}$ (cf.\ \cite[Proposition 5.7]{GHN19}).
Moreover, we have $Y(w)\cong \pi(Y(w))$.
Thus if $(G,\mu)$ is of Coxeter type, we obtain the decomposition $X_{\pc \mu}(\tau_\mu)$ as a union of classical Deligne-Lusztig varieties of Coxeter type in a natural way.
We call this stratification the {\it Bruhat-Tits stratification}.
Also, this is a stratification in the strong sense, i.e., the closure of a stratum is a union of strata.
The closure of a stratum $j\pi(Y(w))$ contains a stratum $j'\pi(Y(w'))$ if and only if the following two conditions are both satisfied:
\begin{enumerate}[(1)]
\item $w\geq_{S,\sigma}w'$, which means by definition that there exists $u\in W_0$ such that $w\geq u^{-1}w'\sigma(u)$.
\item $j(\J\cap P_w)\cap j'(\J\cap P_{w'})\neq \emptyset$.
\end{enumerate}
By \cite[\S 4.7]{He07}, $\geq_{S,\sigma}$ gives a partial order on $\SW$.
Let $\cB(\J, F)$ denote the rational Bruhat-Tits building of $\J$.
Then (2) above is equivalent to requiring that $\kappa(j)=\kappa(j')$ and that the simplices in $\cB(\J, F)$ corresponding to $j(\J\cap P_w)j^{-1}$ and $j'(\J\cap P_{w'})j'^{-1}$ are neighbors (i.e., there exists an alcove which contains both of them).
In \cite{Gortz19}, G\"ortz proved that the Bruhat-Tits stratification coincides with the $\J$-stratification.

Unlike the Bruhat-Tits stratification, the $\J$-stratification always exists.
So the $\J$-stratification is expected to play an important role to go beyond the cases of Coxeter type.
In this paper, we will treat cases such that the $\J$-stratification of $X_{\pc \mu}(b)$ is a refinement of the Ekedahl-Oort stratification.
More precisely, we treat cases satisfying all of the following conditions:
\begin{itemize}
\item For $w\in \SAdm(\mu)_0$, $\J$ acts transitively on the set of irreducible components of $X_w(\tau_\mu)$.
\item For $w\in \SAdm(\mu)_0$, there exist a parahoric subgroup $P_w\subset G(L)$ and an irreducible component $Y(w)$ of $X_w(\tau_\mu)$ such that $\pi(X_w(\tau_\mu))=\bigsqcup_{j\in \J/\J\cap P_w}j \pi(Y(w))$.
\item $Y(w)\cong \pi(Y(w))$ and each $j \pi(Y(w))$ is a $\J$-stratum of $X_{\pc \mu}(b)$.
\end{itemize}
In these cases, we say that the closure relation can be described in terms of $\cB(\J, F)$ if the $\J$-stratification of $X_{\pc \mu}(b)$ is a stratification in the strong sense and $\overline{j\pi(Y(w))}\supseteq j'\pi(Y(w'))$ is equivalent to the following condition:
\begin{enumerate}[\null]
\item There exist sequences $w=w_0\geq_{S,\sigma} w_1\geq_{S,\sigma}\cdots \geq_{S,\sigma}w_k=w'$ in $\SAdm(\mu)_0$ and $j=j_0, j_1\ldots, j_k=j'$ in $\J$ such that $j_{i-1}(\J\cap P_{w_{i-1}})\cap j_i(\J\cap P_{w_i})\neq \emptyset$ for $1\le i\le k$.
\end{enumerate}

\subsection{Deligne-Lusztig Reduction Method}
\label{DL method}
The following Deligne-Lusztig reduction method was established in \cite[Corollary 2.5.3]{GH10}.
\begin{prop}
\label{DL method prop}
Let $w\in \tW$ and let $s\in \tS$ be a simple affine reflection.
If $\ch(F)>0$, then the following two statements hold for any $b\in G(L)$.
\begin{enumerate}[(i)]
\item If $\ell(sw\sigma(s))=\ell(w)$, then there exists a $\J_b$-equivariant universal homeomorphism $X_w(b)\rightarrow X_{sw\sigma(s)}(b)$.
\item If $\ell(sw\sigma(s))=\ell(w)-2$, then there exists a decomposition $X_w(b)=X_1\sqcup X_2$ such that
\begin{itemize}
\item $X_1$ is open and there exists a $\J_b$-equivariant morphism $X_1\rightarrow X_{sw}(b)$, which is  the composition of a Zariski-locally trivial $\G_m$-bundle and a universal homeomorphism. 
\item $X_2$ is closed and there exists a $\J_b$-equivariant morphism $X_2\rightarrow X_{sw\sigma(s)}(b)$, which is the composition of a Zariski-locally trivial $\A^1$-bundle and a universal homeomorphism. 
\end{itemize}
If $\ch(F)=0$, then the above statements still hold by replacing $\A^1$ and $\G_m$ by $\A^{1,\pfn}$ and $\G_m^{\pfn}$ respectively.
\end{enumerate}
\end{prop}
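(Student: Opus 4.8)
The plan is to prove this by the Deligne--Lusztig reduction applied to the $\mathbb P^1$-fibration $\pi_s\colon\cF l=G(L)/I\to G(L)/P_s$, where $P_s=I\sqcup IsI$ is the parahoric subgroup of semisimple rank one attached to the given $s\in\tS$; this is the affine analogue of the classical argument, carried out in \cite[\S2.5]{GH10}. The fibres of $\pi_s$ are copies of $P_s/I\cong\mathbb P^1$ (of $(\mathbb P^1)^{\pfn}$ in the mixed characteristic case), and for $xI\in\cF l$ the fibre $F_{xI}$ through $xI$ satisfies $F_{xI}\setminus\{xI\}=\{x'I\mid\inv(xI,x'I)=s\}\cong\mathbb A^1$. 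First I would record two elementary facts. (a) The double coset $P_s(x^{-1}b\sigma(x))P_{\sigma(s)}$ depends only on $\pi_s(xI)$; hence, if $x^{-1}b\sigma(x)\in IwI$, then $\inv(x'I,b\sigma(x')I)\in\{w,sw,w\sigma(s),sw\sigma(s)\}$ for every $x'I\in F_{xI}$. (b) The assignment $x'I\mapsto b\sigma(x')I$ carries $F_{xI}$ onto the fibre $\pi_{\sigma(s)}^{-1}(\pi_{\sigma(s)}(b\sigma(x)I))$, and under the evident identifications of the two fibres with $\mathbb P^1$ it becomes, up to an automorphism of $\mathbb P^1$, the $\sigma$-twist (that is, a $q$-power map); consequently the locus inside a $\mathbb P^1$-fibre on which the relative position against the $b\sigma(-)$-image equals a prescribed element is cut out by a $\sigma$-twisted equation, and pinning this locus down is what introduces the purely inseparable, hence universal homeomorphism, factors in the conclusion.

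With this in hand I would split into the two length regimes. In case (i), after using that the assertion is symmetric under $w\leftrightarrow sw\sigma(s)$ to reduce to $\ell(sw)=\ell(w)-1$, a direct rank-one computation shows that every $xI$ with $x^{-1}b\sigma(x)\in IwI$ has a unique point of $F_{xI}$ whose relative position against its $b\sigma(-)$-image is $sw\sigma(s)$, that this assignment defines a $\J_b$-equivariant morphism $X_w(b)\to X_{sw\sigma(s)}(b)$ with an inverse of the same shape, and that by (b) it is a universal homeomorphism. In case (ii) one has $sw<w$ and $sw\sigma(s)<sw$; I would analyse the fibres of points of $X_{sw}(b)$ and of $X_{sw\sigma(s)}(b)$ and accordingly partition $X_w(b)=X_1\sqcup X_2$, so that the evident fibrewise maps give a $\J_b$-equivariant morphism $X_1\to X_{sw}(b)$ realising the open subset $X_1$ as a Zariski-locally trivial $\mathbb G_m$-bundle up to a universal homeomorphism ($\mathbb G_m$ being the $\mathbb P^1$-fibre of a point of $X_{sw}(b)$ with the two points carrying the relative positions $w$ and $sw\sigma(s)$ deleted) and a $\J_b$-equivariant morphism $X_2\to X_{sw\sigma(s)}(b)$ realising the closed subset $X_2$ as a Zariski-locally trivial $\mathbb A^1$-bundle up to a universal homeomorphism ($\mathbb A^1$ being the $\mathbb P^1$-fibre with the single point carrying the relative position $sw$ deleted). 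The $\J_b$-equivariance is automatic throughout, since left translation by $\J_b$ commutes with $\pi_s$ and with $x\mapsto b\sigma(x)$; and the case $\ch(F)=0$ follows verbatim once $\mathbb P^1$, $\mathbb A^1$, $\mathbb G_m$ are replaced by their perfections, using that $G(L)/I$ and $G(L)/P_s$ are then perfect ind-schemes.

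I expect the main obstacle to lie in the rank-one bookkeeping for case (ii): one has to determine, in each of the sub-cases $\ell(w\sigma(s))=\ell(w)-1$ and $\ell(w\sigma(s))=\ell(w)-3$, which of the (at most three) special points of a $\mathbb P^1$-fibre carry which of the short relative positions $sw$, $w\sigma(s)$, $sw\sigma(s)$, then to check that $X_1$ is genuinely open and $X_2$ genuinely closed, and finally to upgrade the fibrewise picture to a Zariski-locally trivial bundle structure. None of this is deep, but it is where the work lies; since it is carried out in full in \cite[Corollary 2.5.3]{GH10}, in practice I would simply invoke that result.
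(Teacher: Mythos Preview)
Your proposal is correct, and in fact goes further than the paper itself: the paper does not supply any argument for this proposition but merely records that it ``was established in \cite[Corollary 2.5.3]{GH10}''. The sketch you give---analysing the $\mathbb P^1$-fibres of $\pi_s\colon G(L)/I\to G(L)/P_s$ and tracking the relative positions $w$, $sw$, $w\sigma(s)$, $sw\sigma(s)$ along the fibre---is precisely the argument of that reference, so your approach and the paper's coincide at the level of the final citation.
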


Let $gI\in X_w(b)$.
If $\ell(sw)<\ell(w)$ (we can reduce to this case by exchanging $s$ and $sw\sigma(s)$), then let $g_1I$ denote the unique element in $\cF l$ such that $\inv(g, g_1)=s$ and $\inv(g_1,b\sigma(g))=sw$.
The set $X_1$ (resp.\ $X_2$) above consists of the elements $gI\in X_w(b)$ satisfying $\inv(g_1,b\sigma(g_1))=sw$ (resp.\ $sw\sigma(s)$).
All of the maps in the proposition are given as the map sending $gI$ to $g_1I$.
\begin{rema}
\label{trivial}
Assume that $G$ is split over $F$.
Let $\pr_I\colon \cF l\times \cF l\rightarrow \cF l$ be the projection to the first factor.
We denote by $O(s)\subset \cF l\times \cF l$ the locally closed subvariety of pairs $(gI,hI)$ such that $\inv(g,h)=s$.
Then the restriction $\pr_I\colon O(s)\rightarrow \cF l$ is a Zariski-locally trivial $\A^1$-bundle.
More precisely, this is trivial over (any translation of) the ``big open cell'' (cf.\  \cite[pp.\ 45--48]{Faltings03}).
In particular, this is trivial over any Schubert cell $IvI/I, v\in \tW$.
This implies that the morphism $X_1\rightarrow X_{sw}(b)$ (resp.\ $X_2\rightarrow X_{sw\sigma(s)}(b)$) in (ii) is trivial over $X_{sw}(b)\cap IvI/I$ (resp.\ $X_{sw\sigma(s)}(b)\cap IvI/I$).
\end{rema}

The following result is proved in \cite[Theorem 2.10]{HN14}, which allows us to reduce the study of $X_w(b)$ for any $w$, via the Deligne-Lusztig reduction method, to the study of $X_w(b)$ for $w$ of minimal length in its $\sigma$-conjugacy class.
\begin{theo}
\label{minimal}
For each $w\in \tW$, there exists an element $w'$ which is of minimal length inside its $\sigma$-conjugacy class such that $w\rightarrow_\sigma w'$.
\end{theo}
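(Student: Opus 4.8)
This is due to He and Nie \cite[Theorem 2.10]{HN14}; I sketch the strategy of their proof. The plan is to argue by induction on $\ell(w)$. If $w$ already has minimal length in its $\sigma$-conjugacy class $\mathcal O$, take $w'=w$. Otherwise it is enough to produce \emph{some} $w_1\in\mathcal O$ with $w\rightarrow_\sigma w_1$ and $\ell(w_1)<\ell(w)$: the inductive hypothesis applied to $w_1$ then yields $w_1\rightarrow_\sigma w'$ with $w'$ of minimal length in $\mathcal O$, and transitivity of $\rightarrow_\sigma$ gives $w\rightarrow_\sigma w'$. The intermediate elements along the chain $w\rightarrow_\sigma w_1$ need not have length $<\ell(w)$, so the induction is invoked only at its end; this is the reason the statement is phrased with $\rightarrow_\sigma$ rather than with a single elementary $\sigma$-conjugation.

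So the crux is the following assertion: if $w$ is not of minimal length in $\mathcal O$, then after a (possibly empty) chain of \emph{length-preserving} $\sigma$-cyclic shifts $w\xrightarrow{s_1}_\sigma\cdots\xrightarrow{s_{k-1}}_\sigma w_{k-1}$ there is $s_k\in\tS$ with $\ell(s_kw_{k-1}\sigma(s_k))=\ell(w_{k-1})-2$. To prove this I would analyze $w$ through its image $\bar w\in W_0$ and its translation part via the partial-conjugation technique of \cite{He07}: after a sequence of neutral cyclic shifts one may assume that $w$ sits in a convenient position with respect to a $\tau\sigma$-stable standard parabolic subgroup $W_J$ with $W_J$ finite, into which the excess of $\ell(w)$ over the minimal length of $\mathcal O$ has been pushed; one then invokes the twisted form of the Geck--Pfeiffer theorem for the finite Coxeter group $W_J$ (cf.\ \cite{HN14} and the references therein), which asserts exactly that a non-minimal element of a twisted conjugacy class of a finite Coxeter group admits, after neutral twisted cyclic shifts, a length-decreasing one. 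Transporting this back along the shifts already performed gives the desired $w_1$.

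The main obstacle is precisely this reduction — showing that neutral $\sigma$-cyclic shifts suffice to bring $w$ into such a position while keeping control of the translation part — and it is governed by the invariants of $\mathcal O$. One attaches to $\mathcal O$ its Newton point $\nu_{\mathcal O}$; the minimal length in $\mathcal O$ is bounded below by $\langle 2\rho,\nu_{\mathcal O}\rangle$, with equality exactly when $\mathcal O$ is straight, in which case the minimal length elements are the $\sigma$-straight ones, while in the non-straight case they reduce to a proper Levi subgroup. Hence whenever $\ell(w)$ exceeds this bound there is genuine room to lower it, and the work of \cite{HN14} consists in organizing that excess into an explicit sequence of cyclic shifts — first neutral ones producing the normal form, then one that drops the length by two — separating the $\sigma$-straight (``cuspidal'') case from the one reducing to a proper parabolic and tracking the $\Omega$-component $\tau$ throughout. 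The Coxeter-theoretic input used along the way (the behaviour of $\ell$ under conjugation, the longest element $s_\Sigma$ of a finite $\tau\sigma$-orbit, and the structure of $W_a^{\tau\sigma}$) is the material recalled in \S\ref{notation}; cf.\ also \cite{Steinberg68}.
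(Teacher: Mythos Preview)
The paper does not supply its own proof of this theorem; it merely records the statement and cites \cite[Theorem 2.10]{HN14}. Your proposal does the same and adds a reasonable high-level sketch of the He--Nie argument, so there is nothing to compare.
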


Following \cite[\S 3.4]{HNY22}, we construct the reduction trees for $w$ by induction on $\ell(w)$.

The vertices of the trees are elements of $\tW$.
We write $x\rightharpoonup y$ if $x,y\in \tW$ and there exists $x'\in \tW$ and $s\in \tS$ such that $x\approx_\sigma x'$, $\ell(sx'\sigma(s))=\ell(x')-2$ and $y\in \{sx', sx'\sigma(s)\}$.
These are the (oriented) edges of the trees.

If $w$ is of minimal length in its $\sigma$-conjugacy class of $\tW$, then the reduction tree for $w$ consists of a single vertex $w$ and no edges.
Assume that $w$ is not of minimal length and that a reduction tree is given for any $z\in \tW$ with $\ell(z)<\ell(w)$.
By Theorem \ref{minimal}, there exist $w'$ and $s\in \tS$ with $w\approx_\sigma w'$ and $\ell(sw'\sigma(s))=\ell(w')-2$.
Then a reduction tree of $w$ consists of the given reduction trees of $sw'$ and $sw'\sigma(s)$ and the edges $w\rightharpoonup sw'$ and $w\rightharpoonup sw'\sigma(s)$. 

Let $\cT$ be a reduction tree of $w$.
An end point of $\cT$ is a vertex in $\cT$ of minimal length.
A reduction path in $\cT$ is a path $\unp\colon w \rightharpoonup w_1\rightharpoonup\cdots \rightharpoonup w_n$, where $w_n$ is an end point of $\cT$.
Set $\en(\unp)=w_n$.
We say that $x\rightharpoonup y$ is of type I (resp.\ II) if $\ell(x)-\ell(y)=1$ (resp.\ $\ell(x)-\ell(y)=2$).
For any reduction path $\unp$, we denote by $\ell_{I}(\unp)$ (resp.\ $\ell_{II}(\unp)$) the number of type I (resp.\ II) edges in $\unp$.
We write $X_{\unp}$ a locally closed subscheme of $X_w(b)$ which is $\J_b$-equivariant universally homeomorphic to an iterated fibration of type $(\ell_{I}(\unp),\ell_{II}(\unp))$ over $X_{\en(\unp)}(b)$.

Let $B(\tW,\sigma)$ be the set of $\sigma$-conjugacy classes in $\tW$.
Let $\Psi:B(\tW,\sigma)\rightarrow B(G)$ be the map sending $[w]\in B(\tW,\sigma)$ to $[\dot w]\in B(G)$.
It is known that this map is well-defined and surjective, see \cite[Theorem 3.7]{He14}.
By Proposition \ref{DL method prop} (cf.\ \cite[Proposition 3.9]{HNY22}), we have the following description of $X_w(b)$.
\begin{prop}
\label{decomposition}
Let $w\in \tW$ and $\cT$ be a reduction tree of $w$.
For any $b\in G(L)$, there exists a decomposition
$$X_w(b)=\bigsqcup_{\substack{\unp\ \text{is a reduction path in}\ \cT;\\ \Psi(\en(\unp))=[b]}}X_{\unp}.$$
\end{prop}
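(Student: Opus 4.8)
\medskip

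The plan is to prove this by induction on $\ell(w)$, following the construction of the reduction tree and repeatedly applying Proposition \ref{DL method prop}. First I would treat the base case: if $w$ is of minimal length in its $\sigma$-conjugacy class, then $\cT$ is a single vertex, the only reduction path $\unp$ is the trivial one with $\en(\unp)=w$ and $\ell_I(\unp)=\ell_{II}(\unp)=0$, so $X_{\unp}=X_w(b)$ (an iterated fibration of type $(0,0)$ is the identity). The sum on the right-hand side then has exactly one term if $\Psi([w])=[b]$ and is empty otherwise; in the latter case I must check $X_w(b)=\emptyset$, which holds because $\Psi([w])=[\dot w]$ and $X_w(b)\ne\emptyset$ forces $[b]=[\dot w]$ by the very definition of $\Psi$ together with the fact that $X_w(b)\cong X_w(g^{-1}b\sigma(g))$ depends only on $[b]$, and $[\dot w]$ is the $\sigma$-conjugacy class meeting $IwI$.

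\medskip

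For the inductive step, suppose $w$ is not of minimal length. By Theorem \ref{minimal} there exist $w'$ with $w\approx_\sigma w'$ and $s\in\tS$ with $\ell(sw'\sigma(s))=\ell(w')-2$, and $\cT$ is built from reduction trees $\cT_1$ of $sw'$ and $\cT_2$ of $sw'\sigma(s)$ together with the edges $w\rightharpoonup sw'$ and $w\rightharpoonup sw'\sigma(s)$. Since $w\approx_\sigma w'$ is a chain of elementary $\xrightarrow{s}_\sigma$ steps of equal length, part (i) of Proposition \ref{DL method prop} gives a $\J_b$-equivariant universal homeomorphism $X_w(b)\cong X_{w'}(b)$. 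Then part (ii) gives a decomposition $X_{w'}(b)=X_1\sqcup X_2$ with $X_1$ open, $\J_b$-equivariantly fibered over $X_{sw'}(b)$ as the composition of a Zariski-locally trivial $\G_m$-bundle (or $\G_m^{\pfn}$-bundle) and a universal homeomorphism, and $X_2$ closed, $\J_b$-equivariantly fibered over $X_{sw'\sigma(s)}(b)$ as the composition of a Zariski-locally trivial $\A^1$-bundle (or $\A^{1,\pfn}$-bundle) and a universal homeomorphism. Applying the induction hypothesis to $sw'$ and $sw'\sigma(s)$ (both of strictly smaller length), I decompose $X_{sw'}(b)$ and $X_{sw'\sigma(s)}(b)$ into pieces $X_{\unp'}$ indexed by reduction paths in $\cT_1$ resp. $\cT_2$ whose end point lies over $[b]$. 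Pulling these decompositions back along the two fibration maps and prepending the edge $w\rightharpoonup sw'$ (a type I edge, contributing one to $\ell_I$) resp. $w\rightharpoonup sw'\sigma(s)$ (a type II edge, contributing one to $\ell_{II}$) yields, for each reduction path $\unp$ in $\cT$, a locally closed $X_{\unp}\subseteq X_w(b)$ that is a $\J_b$-equivariant iterated fibration of type $(\ell_I(\unp),\ell_{II}(\unp))$ over $X_{\en(\unp)}(b)$, as required. Taking the disjoint union over both branches, and noting that every reduction path in $\cT$ starts with exactly one of the two edges, gives $X_w(b)=\bigsqcup_{\unp}X_{\unp}$ with $\unp$ ranging over reduction paths whose endpoint satisfies $\Psi(\en(\unp))=[b]$; paths whose endpoint lies in a different $\sigma$-conjugacy class contribute the empty set by the base case applied at that endpoint.

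\medskip

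The main point requiring care — rather than a serious obstacle — is the bookkeeping that the class of ``$\J_b$-equivariant iterated fibration of type $(a,b)$ over a base, up to universal homeomorphism'' is closed under the operations used: composing with the $\G_m$- or $\A^1$-bundles of Proposition \ref{DL method prop}(ii), and pulling back along such bundles. This is essentially formal, since Zariski-locally trivial bundles with affine-space or torus fibers pull back to the same type of bundle and compose to iterated fibrations, and $\J_b$-equivariance is preserved throughout by construction (the reduction maps $gI\mapsto g_1I$ are manifestly $\J_b$-equivariant, being defined by relative-position conditions invariant under left translation by $\J_b$); one just has to track the counts $\ell_I,\ell_{II}$ through the induction, which match by definition of type I/II edges. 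A secondary check is that the decomposition $X_w(b)=X_1\sqcup X_2$ is into a genuinely open and a genuinely closed subscheme so that iterating down the tree keeps all pieces locally closed — this is exactly the content of Proposition \ref{DL method prop}(ii). In characteristic zero every occurrence of $\G_m$ and $\A^1$ is replaced by its perfection throughout, which changes nothing in the argument.
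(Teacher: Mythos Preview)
Your proof is correct and is exactly the inductive argument the paper has in mind: the paper does not spell out a proof but simply attributes the result to iterating Proposition~\ref{DL method prop} (with a reference to \cite[Proposition 3.9]{HNY22}), and your write-up unpacks precisely that induction on $\ell(w)$. One small sharpening: in the base case, the fact that $X_w(b)=\emptyset$ when $w$ is of minimal length and $\Psi([w])\neq[b]$ is not quite ``by the very definition of $\Psi$'' but is He's theorem that $IwI$ lies in a single $\sigma$-conjugacy class for such $w$ (this is part of the content behind \cite[Theorem 3.7]{He14}, already cited in the paper).
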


\subsection{Length Positive Elements}
\label{LP}
We denote by $\delta^+$ the indicator function of the set of positive roots, i.e.,
$$\delta^+\colon \Phi\rightarrow \{0,1\},\quad \alpha \mapsto
\begin{cases}
1 & (\alpha\in \Phi_+) \\
0 & (\alpha\in \Phi_-).
\end{cases}$$
Note that any element $w\in \tW$ can be written in a unique way as $w=x\vp^\mu y$ with $\mu$ dominant, $x,y\in W_0$ such that $\vp^\mu y\in \SW$.
We have $p(w)=xy$ and $\ell(w)=\ell(x)+\la\mu, 2\rho\ra-\ell(y)$.
We define the set of {\it length positive} elements by $$\LP(w)=\{v\in W_0\mid \la v\alpha,y^{-1}\mu\ra+\delta^+(v\alpha)-\delta^+(xyv\alpha)\geq 0\  \text{for all $\alpha\in \Phi_+$}\}.$$
Then we always have $y^{-1}\in \LP(w)$.
Indeed, $y$ is uniquely determined by the condition that
$\la\alpha, \mu\ra\geq \delta^+(-y^{-1}\alpha)\ \text{for all $\alpha\in \Phi_+$}$.
Since $\delta^+(\alpha)+\delta^+(-\alpha)=1$, we have $$\la y^{-1}\alpha, y^{-1}\mu\ra+\delta^+(y^{-1}\alpha)-\delta^+(x\alpha)=\la \alpha,\mu\ra-\delta^+(-y^{-1}\alpha)+\delta^+(-x\alpha)\geq 0.$$
\begin{lemm}
\label{LPr}
For any $w=x\vp^\mu y\in \tW$ as above, we define $$\Phi_w\coloneqq\{\alpha\in \Phi_+\mid \la\alpha,\mu\ra-\delta^-(y^{-1}\alpha)+\delta^-(x\alpha)=0\}.$$
Here $\delta^-$ denotes the indicator function of the set of negative roots. Then we have
$$y\LP(w)=\{r^{-1}\in W_0\mid r(\Phi_+\setminus \Phi_w)\subset \Phi_+\ \text{or equivalently,}\ r^{-1}\Phi_+\subset \Phi_+\cup-\Phi_w\}.$$
\end{lemm}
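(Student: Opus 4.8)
The plan is to translate membership in $y\LP(w)$ into a single sign condition on $\Phi$, and then to extract the two stated descriptions by splitting $r^{-1}\Phi_+$ according to the sign of its elements and by passing to complements.

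First I would substitute $r^{-1}=yv$, so that $r^{-1}\in y\LP(w)$ if and only if $v=y^{-1}r^{-1}\in\LP(w)$. Inserting $v=y^{-1}r^{-1}$ into the defining inequalities of $\LP(w)$, using the $W_0$-invariance of the pairing $\langle\,\cdot\,,\,\cdot\,\rangle$ (so that $\langle y^{-1}r^{-1}\alpha,y^{-1}\mu\rangle=\langle r^{-1}\alpha,\mu\rangle$) together with $xy\cdot y^{-1}r^{-1}\alpha=xr^{-1}\alpha$, one sees that $r^{-1}\in y\LP(w)$ if and only if $f(\beta)\geq 0$ for every $\beta\in r^{-1}\Phi_+$, where
\[
f(\beta):=\langle\beta,\mu\rangle+\delta^+(y^{-1}\beta)-\delta^+(x\beta)\qquad(\beta\in\Phi).
\]

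Next I would record the two elementary properties of $f$ that drive the argument. From $\delta^+(\gamma)+\delta^+(-\gamma)=1$ one obtains $f(-\beta)=-f(\beta)$, and rewriting $\delta^+=1-\delta^-$ gives $f(\alpha)=\langle\alpha,\mu\rangle-\delta^-(y^{-1}\alpha)+\delta^-(x\alpha)$, so that $\Phi_w=\{\alpha\in\Phi_+\mid f(\alpha)=0\}$; moreover the computation preceding the lemma (the one showing $y^{-1}\in\LP(w)$, i.e.\ that $\langle\alpha,\mu\rangle\geq\delta^+(-y^{-1}\alpha)$ for $\alpha\in\Phi_+$) says exactly that $f(\alpha)\geq 0$ for all $\alpha\in\Phi_+$. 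Now split $r^{-1}\Phi_+=(r^{-1}\Phi_+\cap\Phi_+)\sqcup(r^{-1}\Phi_+\cap\Phi_-)$: on the first piece the inequality $f\geq 0$ is automatic, while for $\beta\in r^{-1}\Phi_+\cap\Phi_-$ we have $-\beta\in\Phi_+$, hence $f(\beta)=-f(-\beta)\leq 0$, so that $f(\beta)\geq 0$ is equivalent to $f(-\beta)=0$, i.e.\ to $-\beta\in\Phi_w$. Therefore $r^{-1}\in y\LP(w)$ if and only if $r^{-1}\Phi_+\cap\Phi_-\subseteq-\Phi_w$, which is precisely $r^{-1}\Phi_+\subseteq\Phi_+\cup-\Phi_w$.

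Finally, to see that this last condition is equivalent to $r(\Phi_+\setminus\Phi_w)\subseteq\Phi_+$, I would pass to complements in $\Phi$: since $\Phi_w\subseteq\Phi_+$, the complement of $\Phi_+\cup-\Phi_w$ equals $\Phi_-\setminus(-\Phi_w)=-(\Phi_+\setminus\Phi_w)$, so $r^{-1}\Phi_+\subseteq\Phi_+\cup-\Phi_w$ is equivalent to $r^{-1}\Phi_-\supseteq-(\Phi_+\setminus\Phi_w)$, hence to $\Phi_+\setminus\Phi_w\subseteq r^{-1}\Phi_+$, and every step here is reversible. I do not expect a real obstacle in this proof; the only thing that needs care is the bookkeeping between $\delta^+$ and $\delta^-$ and the sign flips when replacing a root by its negative, together with keeping track of which of $x$, $y$, $r$ is acting on which object.
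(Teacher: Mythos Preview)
Your argument is correct. The substitution $r^{-1}=yv$, the rewriting of the length-positivity inequality as $f(\beta)\ge 0$ on $r^{-1}\Phi_+$, the oddness $f(-\beta)=-f(\beta)$, and the use of the inequality $f|_{\Phi_+}\ge 0$ (which is exactly the computation preceding the lemma, with $r=1$) to reduce everything to $r^{-1}\Phi_+\cap\Phi_-\subseteq -\Phi_w$ are all accurate; the passage to $r(\Phi_+\setminus\Phi_w)\subseteq\Phi_+$ via complements is clean and reversible as you note.

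As for comparison: the paper does not prove this lemma in place but simply cites \cite[Lemma 2.8]{Shimada4} (and attributes the description to Lim). Your write-up supplies a self-contained proof using only the definitions and the remark immediately preceding the lemma, so there is no discrepancy in strategy to discuss; you have essentially reconstructed the intended argument.
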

\begin{proof}
See \cite[Lemma 2.8]{Shimada4}.
\end{proof}

The notion of length positive elements is defined by Schremmer in \cite{Schremmer22}.
The description of $\LP(w)$ in Lemma \ref{LPr} is due to Lim \cite{Lim23}.

We say that the Dynkin diagram of $G$ is $\sigma$-connected if it cannot be written as a union of two proper $\sigma$-stable subdiagrams that are not connected to each other. 
The following theorem is a refinement of the non-emptiness criterion in \cite{GHN15}, which is conjectured by Lim in \cite{Lim23} and proved by Schremmer in \cite[Proposition 5]{Schremmer23}.
\begin{theo}
\label{empty}
Assume that the Dynkin diagram of $G$ is $\sigma$-connected.
Let $b\in G(L)$ be a basic element with $\kappa(b)=\kappa(w)$.
Then $X_w(b)=\emptyset$ if and only if the following two conditions are satisfied:
\begin{enumerate}[(i)]
\item $|W_{\supp_\sigma(w)}|$ is not finite.
\item There exists $v\in \LP(w)$ such that $\supp_\sigma(\sigma^{-1}(v)^{-1}p(w)v)\subsetneq S$.
\end{enumerate}
\end{theo}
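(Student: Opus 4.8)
The plan is to deduce the theorem from the non-emptiness criterion of \cite{GHN15} by means of a purely combinatorial reformulation using length-positive elements. First I would reduce to a convenient representative of $[b]$: since $\kappa(b)=\kappa(w)$ and $b$ is basic, we may take $b=\dot\tau$ with $\tau\in\Omega$ and $\kappa(\tau)=\kappa(w)$, so that $w\in W_a\tau$. The implication ``$X_w(b)=\emptyset\Rightarrow(\mathrm{i})$'' is then immediate: if $W_{\supp_\sigma(w)}$ is finite, Proposition \ref{spherical} identifies $X_w(\dot\tau)$ with a disjoint union of copies of the classical Deligne--Lusztig variety $Y(w)$, which is nonempty. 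So it remains to prove that, assuming (i), one has $X_w(b)=\emptyset$ if and only if (ii) holds.

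For this I would invoke the criterion of \cite{GHN15} (valid because the Dynkin diagram is $\sigma$-connected), which states in substance that, for basic $b$ with $\kappa(b)=\kappa(w)$ and $W_{\supp_\sigma(w)}$ infinite, $X_w(b)$ is empty exactly when $w$ is a $\sigma$-$P$-alcove element for some proper semistandard parabolic $P=MN$; here the usual compatibility requirement on $b$ is automatic, because $b$ basic has central Newton point, hence central --- and so basic --- in every Levi. Unwinding the definition of $\sigma$-$P$-alcove element, this existence statement becomes: there is $v_0\in W_0$ with $\supp_\sigma(\sigma^{-1}(v_0)^{-1}p(w)v_0)\subsetneq S$ satisfying, in addition, the positivity inequalities attached to the roots of the nilradical $N$. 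Thus the theorem is equivalent to the combinatorial claim that such a $v_0$ exists if and only if some $v\in\LP(w)$ already satisfies $\supp_\sigma(\sigma^{-1}(v)^{-1}p(w)v)\subsetneq S$.

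The ``if'' half of this claim is the easier one: given $v\in\LP(w)$ with proper $\sigma$-support $J$, one checks via the description of $\LP(w)$ in Lemma \ref{LPr} (in terms of the subsystem $\Phi_w$) that length-positivity of $v$ forces the required $P$-alcove positivity, so $v$ itself serves as $v_0$. The ``only if'' half is precisely the refinement conjectured by Lim \cite{Lim23} and established by Schremmer in \cite[Proposition 5]{Schremmer23}, and it is the main obstacle: a witness $v_0$ supplied by \cite{GHN15} need only satisfy the weaker positivity coming from one semistandard nilradical, and one must promote it to a genuinely length-positive element without enlarging the $\sigma$-support to all of $S$. I would attack this by analysing, with Schremmer's length-functional calculus and Lemma \ref{LPr}, how $\LP(w)$ and $\Phi_w$ transform under $W_0$-conjugation, successively correcting $v_0$ by simple reflections; the bookkeeping needed to ensure that each correction preserves both the length-positivity inequalities and the properness of the support is where the real work lies.
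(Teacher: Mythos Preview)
The paper does not prove this theorem at all; it merely records it as a known result, attributing the conjecture to Lim \cite{Lim23} and the proof to Schremmer \cite[Proposition~5]{Schremmer23}. Your proposal correctly identifies this attribution and goes further by sketching the architecture of the argument, so in that sense there is nothing in the paper to compare against beyond the bare citation.

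That said, a brief comment on your sketch. The overall shape is sound: the reduction to $b=\dot\tau$, the use of Proposition~\ref{spherical} to dispose of the case where $W_{\supp_\sigma(w)}$ is finite, and the reformulation of the $(P,\sigma)$-alcove criterion of \cite{GHN15} in terms of $\LP(w)$ are all the right ingredients. You are also right that the substantive step is the ``only if'' direction, promoting a $P$-alcove witness $v_0$ to a genuine length-positive element while keeping the $\sigma$-support proper. However, your final paragraph (``successively correcting $v_0$ by simple reflections'') is not really a proof plan; Schremmer's argument in \cite{Schremmer23} is more structural, and the ad hoc correction procedure you describe would be difficult to make terminate while controlling both constraints simultaneously. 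If you intend to reproduce the proof rather than cite it, you should consult \cite{Schremmer23} directly for the actual mechanism.
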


\begin{rema}
If $\kappa(b)\neq\kappa(w)$, then $X_w(b)=\emptyset$.
\end{rema}

For $w\in \tW$, we say that $w$ has positive Coxeter part if there exists $v\in \LP(w)$ such that $v^{-1}p(w)v$ is a Coxeter element.
Although not needed in this paper, it is worth mentioning that this condition induces a simple geometric structure of $X_w(b)$.
\begin{theo}
\label{simple}
Assume that $w\in \tW$ has positive Coxeter part and $X_w(b)\neq \emptyset$.
Then $X_w(b)$ has only one $\J_b$-orbit of irreducible components, and each irreducible component is an iterated fibration over a Deligne-Lusztig variety of Coxeter type whose iterated fibers are either $\A^1$ or $\G_m$.
If $b$ is basic, then all fibers are $\A^1$.
\end{theo}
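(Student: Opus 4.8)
Let me plan a proof.

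The plan is to analyse $X_w(b)$ through a reduction tree $\cT$ of $w$ together with Proposition \ref{decomposition}, which gives $X_w(b)=\bigsqcup_{\unp}X_{\unp}$, the disjoint union over those reduction paths $\unp$ in $\cT$ with $\Psi(\en(\unp))=[b]$; here each $X_{\unp}$ is $\J_b$-equivariantly universally homeomorphic to an iterated fibration over $X_{\en(\unp)}(b)$ with exactly $\ell_I(\unp)$ fibers equal to $\G_m$ and $\ell_{II}(\unp)$ fibers equal to $\A^1$. Granting this, Theorem \ref{simple} reduces to three assertions: (a) every endpoint $u=\en(\unp)$ occurring with $\Psi([u])=[b]$ is, after replacing it by a $\approx_\sigma$-equivalent element, of a form to which Proposition \ref{spherical} (or, when $b$ is non-basic, its Hodge--Newton/Levi analogue) applies, with $W_{\supp_\sigma(u)}$ finite and $u$ a $\sigma$-Coxeter element of $W_{\supp_\sigma(u)}$, so that $X_u(b)=\bigsqcup_j jY(u)$ with $Y(u)$ an irreducible Deligne--Lusztig variety of Coxeter type and $\J_b$ permuting the pieces $jY(u)$ transitively; (b) the pieces $X_{\unp}$ assemble so that the maximal irreducible components of $X_w(b)$ form a single $\J_b$-orbit; (c) when $b$ is basic, $\cT$ can be chosen so that $\ell_I(\unp)=0$ for every relevant $\unp$, so that all iterated fibers are $\A^1$.

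For (a) the main input is that the property of having positive Coxeter part is preserved under the elementary moves $x\rightharpoonup y$ and that, at an endpoint, it forces the support to have finite Weyl group. I would first use Lemma \ref{LPr} and the standard relationship between length positive elements and $\sigma$-conjugation from \cite{Schremmer22}, \cite{Lim23}, \cite{Schremmer23} to reduce to the case where $p(w)$ itself is a $\sigma$-Coxeter element of $W_0$ and $1\in\LP(w)$; then for each move, passing from $w$ to $sw$ or to $sw\sigma(s)$ multiplies $p(w)$ by one or two simple reflections, and one tracks a length positive witness of the Coxeter property through Lemma \ref{LPr}. At an endpoint $u$, which is a minimal length element of its $\sigma$-conjugacy class with $X_u(b)\ne\emptyset$: if $W_{\supp_\sigma(u)}$ were infinite, then since $u$ still has positive Coxeter part one would produce, via Theorem \ref{empty} applied to the basic $\sigma$-conjugacy class with Kottwitz point $\kappa(u)$, a length positive $v$ with $\supp_\sigma(\sigma^{-1}(v)^{-1}p(u)v)\subsetneq S$, and hence that basic stratum would be empty; for $b$ basic this contradicts $X_u(b)\ne\emptyset$ directly, and for $b$ non-basic one first performs a Hodge--Newton reduction to a proper Levi and argues there. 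Once $W_{\supp_\sigma(u)}$ is finite, minimality of $\ell(u)$ together with the Coxeter property pins $u$ down as a $\sigma$-Coxeter element of $W_{\supp_\sigma(u)}$, Proposition \ref{spherical} applies, and irreducibility of $Y(u)$ is Lusztig's result on Deligne--Lusztig varieties attached to Coxeter elements.

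For (b), I would induct on $\ell(w)$ minus the minimal length in the $\sigma$-conjugacy class of $w$, the base case being (a). If $w$ is not of minimal length, choose $w\approx_\sigma w'$ and $s\in\tS$ with $\ell(sw'\sigma(s))=\ell(w')-2$; then $X_w(b)\cong X_{w'}(b)=X_1\sqcup X_2$ with $X_1\to X_{sw'}(b)$ a $\G_m$-bundle and $X_2\to X_{sw'\sigma(s)}(b)$ an $\A^1$-bundle, each up to universal homeomorphism. By (a), both $sw'$ and $sw'\sigma(s)$ again have positive Coxeter part, so by the inductive hypothesis each of $X_{sw'}(b)$ and $X_{sw'\sigma(s)}(b)$ is empty or has a single $\J_b$-orbit of irreducible components, each an iterated fibration over a Deligne--Lusztig variety of Coxeter type; hence the same holds for $X_1$ and for $X_2$. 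To conclude that the maximal components of $X_w(b)$ form one $\J_b$-orbit I would invoke the equidimensionality of $X_w(b)$ together with $\dim X_1=\dim X_{sw'}(b)+1$ and $\dim X_2=\dim X_{sw'\sigma(s)}(b)+1$: the piece of maximal dimension is then unique up to $\J_b$ and its closure contains the other, so no new $\J_b$-orbit of components appears.

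Finally, for (c), the $\G_m$-bundle $X_1\to X_{sw'}(b)$ is surjective, so the type I branch contributes a $\G_m$-fiber precisely when $X_{sw'}(b)\ne\emptyset$. It therefore suffices to show that when $b$ is basic one can, at every non-minimal node $w'$ along the reduction, choose $s$ with $\ell(sw'\sigma(s))=\ell(w')-2$ and $X_{sw'}(b)=\emptyset$; by Theorem \ref{empty} the latter is a combinatorial condition, and I expect it to follow from the positive Coxeter part of $w'$ by choosing $s$ so that $W_{\supp_\sigma(sw')}$ is infinite while the length positive condition of Theorem \ref{empty} persists. Establishing the existence of such an $s$ at every step — equivalently, showing that in the basic case the reduction can always be steered away from the non-basic $\sigma$-conjugacy classes on the type I side — is, together with the dimension and orbit bookkeeping in (b), where I expect the real difficulty to lie; the remaining steps amount to pushing length positive witnesses through Lemma \ref{LPr} and invoking Propositions \ref{spherical}, \ref{DL method prop} and \ref{decomposition}. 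For $b$ non-basic the type I branch can genuinely be nonempty, consistently with the appearance of $\G_m$-fibers in the statement.
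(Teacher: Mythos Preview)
The paper does not prove this theorem at all: its proof is a two-line citation to \cite[Theorem A]{SSY23} and \cite[Theorem 1.1]{HNY22}. So there is no ``paper's approach'' to compare against beyond those references; your outline is an attempt to reconstruct what happens there.

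There is a genuine gap in part (b). Your dimension argument does not yield a single $\J_b$-orbit of components. Even granting equidimensionality of $X_w(b)$ at Iwahori level (which is itself not an elementary input), nothing in your sketch rules out $\dim X_1=\dim X_2$: if both $X_{sw'}(b)$ and $X_{sw'\sigma(s)}(b)$ are nonempty you only know $\dim X_1=\dim X_{sw'}(b)+1$ and $\dim X_2=\dim X_{sw'\sigma(s)}(b)+1$, and in that situation $X_2$ could carry top-dimensional components of $X_w(b)$ not lying in $\overline{X_1}$, producing a second $\J_b$-orbit. The actual mechanism in \cite{HNY22} and \cite{SSY23} is different and stronger: one shows that for $w$ with positive (or finite) Coxeter part and any fixed $[b]$, there is \emph{exactly one} reduction path $\unp$ in a reduction tree with $\Psi(\en(\unp))=[b]$. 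This uniqueness is a purely combinatorial statement about how the class function on $\tW$ behaves along the tree, and it is what simultaneously gives a single $\J_b$-orbit and, in the basic case, forces every edge on that unique path to be of type II (your part (c)). Your plan neither states nor proves this uniqueness, and the ``steering'' argument you suggest for (c) is really the same missing fact in disguise.

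Part (a) is also more delicate than your sketch suggests: the stability of the positive Coxeter property along $x\rightharpoonup y$ and the claim that endpoints are $\sigma$-Coxeter in a finite $W_{\supp_\sigma}$ are precisely the technical core of \cite{SSY23}, and tracking a length positive witness through Lemma \ref{LPr} does not come for free. If you want to turn your plan into a proof, the decisive step to supply is the uniqueness of the reduction path with prescribed endpoint; once that is in hand, Proposition \ref{decomposition} and Proposition \ref{spherical} finish the job as you indicate.
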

\begin{proof}
This is \cite[Theorem A]{SSY23}.
See also \cite[Theorem 1.1]{HNY22}.
\end{proof}

For $\mu\in \Y$, we say that $(G,\mu)$ is of positive Coxeter type if every $w\in \SAdm(\mu)_0$ has positive Coxeter part.
If $(G,\mu)$ is of positive Coxeter type, then by Theorem \ref{simple}, $X_{\pc \mu}(\tau_\mu)$ should have a simple geometric structure.
If $G=\GL_n$, it follows from \cite[Theorem 4.12]{SSY23} that every minimal length element in its $\sigma$-conjugacy class has positive Coxeter part.
Thus if $(\GL_n,\mu)$ is of Coxeter type, then $(\GL_n,\mu)$ is of positive Coxeter type.

\section{Classification}
\label{classification}
From now and until the end of this paper, we set $G=\GL_n$ and $b=\tau^m$.
Fix $\mu\in \Y_+$.
Let $\mu(i)$ denote the $i$-th entry of $\mu$.
Then $[\tau^m]\in B(G,\mu)$ if and only if $m=\mu(1)+\cdots+\mu(n)$.
We assume this from now (i.e., $b=\tau^m=\tau_\mu$).

\subsection{The Ekedahl-Oort Stratification for $\omega_2+\omega_{n-2}$}
Throughout this subsection, we assume $\mu=\omega_2+\omega_{n-2}$.
Also we assume that $n\geq 4$.

\begin{lemm}
\label{EO 2n-2 not}
There exists $w\in \SAdm(\mu)^\circ\coloneqq \SAdm(\mu)\setminus \SAdm(\omega_1+\omega_{n-1})$ such that $v^{-1}p(w)v$ is not a Coxeter element for any $v\in \LP(w)$ and $X_w(b)\neq \emptyset$.
\end{lemm}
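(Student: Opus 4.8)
The goal is to produce an explicit witness $w\in\SAdm(\mu)^\circ$ (with $\mu=\omega_2+\omega_{n-2}$, $n\geq 4$) such that $X_w(b)\neq\emptyset$ but $v^{-1}p(w)v$ is not a Coxeter element of $W_0=S_n$ for any $v\in\LP(w)$. Since everything is explicit, I would simply guess a good candidate element and verify the three properties. The natural place to look is among elements whose image $p(w)$ is \emph{not} a single $n$-cycle: if $p(w)$ factors into at least two disjoint cycles (in a way stable under the relevant conjugation), it cannot be conjugate to a Coxeter element, and then the condition on $\LP(w)$ is automatic no matter what $\LP(w)$ turns out to be. So the real content is: find $w\le\vp^{w_0\mu}$ for some $w_0\in W_0$ with $w\notin\SAdm(\omega_1+\omega_{n-1})$, with $\kappa(w)=\kappa(\tau_\mu)$, with $p(w)$ not of ``Coxeter shape'', and with $X_w(b)\neq\emptyset$.

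**Constructing the candidate.** I would take $w$ of the form $x\vp^{\mu}y$ (or a suitable conjugate translation element) with $p(w)=xy$ chosen to be a product of two disjoint transpositions, e.g. $p(w)=s_1 s_3=(1\,2)(3\,4)$ (using a fourth, fifth, \ldots strand trivially when $n>4$), so that $\supp_\sigma(w)$ is a proper subset like $\{s_0,s_1,s_2,s_3\}$ that still generates a \emph{finite} parabolic $W_{\supp_\sigma(w)}$ — this is what lets me invoke Proposition~\ref{spherical} and get $X_w(\tau)\neq\emptyset$ directly, since over the corresponding parahoric $Y(w)$ is a nonempty classical Deligne–Lusztig variety. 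Concretely, for $\mu=\omega_2+\omega_{n-2}$ one has $\langle\alpha,\mu\rangle\in\{0,1,2\}$, and I would pick $x,y$ so that $w$ is the element of $\SW$ underlying a length-$2$ Deligne–Lusztig datum in a rank-$4$ sub-root-system (of type $A_3$ or $A_1\times A_1$ embedded via $s_0$); a short check against the explicit length formula $\ell(w_0\vp^\lambda)=\sum|\langle\alpha,\lambda\rangle\pm 1|$ pins down the reduced word. The key point in the choice is to keep $w\notin\SAdm(\omega_1+\omega_{n-1})$: since $\Adm(\omega_1+\omega_{n-1})\subsetneq\Adm(\omega_2+\omega_{n-2})$ by \cite[Lemma 4.5]{Haines01}, I need $w$ not below any $\vp^{w_0(\omega_1+\omega_{n-1})}$; this is forced by making $\langle\alpha,\mu\rangle=2$ actually ``used'' in the reduced expression of $w$, i.e. by making $w$ have a subword of the form coming from a coroot with pairing $2$.

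**Verifying non-emptiness and the Coxeter obstruction.** For non-emptiness I would give two independent arguments and keep whichever is cleaner: either (a) directly via Proposition~\ref{spherical}, checking $W_{\supp_\sigma(w)}$ finite and noting $Y(w)\neq\emptyset$ because the Deligne–Lusztig variety for a length-$\ell(w)$ element in a finite Weyl group is nonempty; or (b) via Theorem~\ref{empty}: since the Dynkin diagram of $\GL_n$ is $\sigma$-connected (here $\sigma$ acts trivially, $b=\tau^m$, so actually I would apply the $\tau$-twisted version by passing to $W_a^{\tau\sigma}$, or simply cite that $\supp_\sigma(w)$ finite already rules out emptiness via condition~(i) of Theorem~\ref{empty}). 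For the Coxeter obstruction: $v^{-1}p(w)v$ ranges over a single conjugacy class in $S_n$ as $v$ ranges over $W_0$, namely the class of $p(w)$; a Coxeter element of $S_n$ is an $n$-cycle (more precisely, in the Iwahori–Weyl setup the relevant Coxeter elements of $W_{\supp_\sigma(w)}\cong$ some $S_k$ are $k$-cycles). Since $p(w)$ has cycle type $(2,2,1,\ldots,1)$ it is not an $n$-cycle and not a $k$-cycle for the ambient rank, so $v^{-1}p(w)v$ is never a Coxeter element — this holds for \emph{all} $v\in W_0$, a fortiori for all $v\in\LP(w)$.

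**Main obstacle.** The only genuinely fiddly step is the simultaneous bookkeeping: exhibiting one $w$ that (i) lies in $\SAdm(\omega_2+\omega_{n-2})$ but (ii) \emph{not} in $\SAdm(\omega_1+\omega_{n-1})$, (iii) has $\supp_\sigma(w)$ finite so that $X_w(b)\neq\emptyset$ is cheap, and (iv) has $p(w)$ of the wrong cycle type — all four at once, uniformly in $n\geq 4$. I expect the cleanest realization to come from a single explicit element supported on $\{s_0,s_1,s_2,s_3\}$ (with the remaining simple reflections absent), so that the whole verification takes place inside a fixed finite sub-Weyl-group independent of $n$, and only the admissibility comparison (i)--(ii) needs the global translation-element picture; checking (ii) — that $w$ is genuinely ``too big'' for $\omega_1+\omega_{n-1}$ — via the explicit $\SAdm$ description or a length/coroot-pairing argument is where I would spend the most care.
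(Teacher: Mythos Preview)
Your plan has a genuine gap: you never actually produce the witness, and the route you propose for non-emptiness --- arranging $W_{\supp_\sigma(w)}$ to be finite so that Proposition~\ref{spherical} applies --- is unlikely to be compatible with the requirement $w\in\SAdm(\mu)^\circ$ for $n\geq 5$. Concretely, $\tau_\mu=\tau^n$ is central, so $\supp_\sigma(w)$ is just the ordinary support of $w\tau^{-n}\in W_a$. If you insist on support contained in $\{s_0,s_1,s_2,s_3\}$ you cannot get $p(w)=(1\,2)(3\,4)$ for $n\geq 5$, since $p(s_0)=(1\,n)$ moves $n$; and if you drop $s_0$ then the $\SW$--condition forces $w=\tau^n$. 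More fundamentally, elements of $\SAdm(\mu)^\circ$ have dominant translation part $\omega_2+\omega_{n-2}$ (not $\preceq\omega_1+\omega_{n-1}$), which for $n\geq 5$ tends to force full affine support. So the ``cheap'' non-emptiness via Proposition~\ref{spherical} is not available, and the bookkeeping you flag as the main obstacle is in fact the whole proof.

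The paper takes a different route. It writes down an explicit $w=\vp^\mu y\in\SAdm(\mu)^\circ$ (for $n\geq 6$, $y$ is a specific $(n-1)$-cycle fixing $3$; for $n=4,5$ separate small choices) and then uses Lemma~\ref{LPr} to describe $y\LP(w)$ via the set $\Phi_+\setminus\Phi_w$. One checks that $\chi_{1,3},\chi_{3,n}\in\Phi_+\setminus\Phi_w$; since the only $(n-1)$-cycles lying in a proper standard parabolic of $W_0$ are $(1\,2\,\cdots\,n-1)$ and $(2\,3\,\cdots\,n)$, any $r$ conjugating $y$ into such a parabolic forces $r(3)\in\{1,n\}$, contradicting $r\chi_{1,3},r\chi_{3,n}\in\Phi_+$. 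Thus condition (ii) of Theorem~\ref{empty} fails, giving $X_w(b)\neq\emptyset$, and the cycle type of $y$ (an $(n-1)$-cycle for $n\geq 6$, type $(2,2)$ for $n=4$) rules out Coxeter conjugates. Note that for $n=5$ the paper's $y$ is a $5$-cycle, so your cycle-type shortcut would not even apply there; one really must analyze $\LP(w)$.
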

\begin{proof}
First assume that $n\geq 6$.
Set $y=s_{n-2}s_{n-1}s_{n-3}\cdots s_2s_1s_3=(1\ n-1\ n\ n-2\ n-3\ \cdots\ 2)(3\ 4)=(1\ n-1\ n\ n-2\ \cdots\ 5\ 4\ 2)$.
It is straightforward to check that $\vp^\mu y\in \SAdm(\mu)^\circ$ and $\chi_{1,3},\chi_{3,n}\in \Phi_+\setminus \Phi_{\vp^\mu y}$.
Let $r\in W_0$.
If $ryr^{-1}\in \bigcup_{J\subsetneq S}W_J$, then $(r(1)\ r(n-1)\ r(n)\ r(n-2)\ \cdots\ r(5)\ r(4)\ r(2))=(1\ 2\ \cdots\ n-1)$ or $(2\ 3\ \cdots\ n)$.
In the former (resp.\ latter) case, we must have $r(3)=n$ (resp.\ $r(3)=1$) and hence $r\chi_{3,n}$ (resp.\ $r\chi_{1,3}$) is negative.
Then by Lemma \ref{LPr} and Theorem \ref{empty}, the statement holds for $w=\vp^\mu y$.
If $n=4$ (resp.\ $5$), set $y=s_2s_3s_1s_2=(1\ 3)(2\ 4)$ (resp.\ $s_3s_4s_2s_1s_3s_2=(1\ 4\ 2\ 5\ 3)$).
Then it is easy to check that the statement holds for $w=\vp^\mu y$.
\end{proof}

\subsection{Classification}
Let $\omega_k$ denote the cocharacter of the form $(1,\ldots,1,0,\ldots,0)$ in which $1$ is repeated $k$ times.
The following theorem is the classification of Coxeter type for $\GL_n$ (cf.\ \cite[Theorem 1.4]{GHN22}).
\begin{theo}
The following assertions on $\mu$ are equivalent.
\begin{enumerate}[(i)]
\item The pair $(\GL_n,\mu)$ is of Coxeter type.
\item The cocharacter $\mu$ is central or one of the following forms modulo $\Z\omega_n$:
\begin{align*}
\omega_1,\quad \omega_{n-1}\ (n\geq 1),\quad \omega_1+\omega_{n-1}\ (n\geq 2),\quad \omega_2\ (n=4).
\end{align*}
\end{enumerate}
\end{theo}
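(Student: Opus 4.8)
This classification is due to G\"ortz, He and Nie \cite[Theorem 1.4]{GHN22}, so the quickest route is to cite it; it can also be read off from Theorem \ref{main thm} together with the observation that $(\GL_n,\mu)$ is of Coxeter type precisely when it is of positive Coxeter type \emph{and} each Deligne--Lusztig stratum of $X_{\pc\mu}(\tau_\mu)$ is itself of Coxeter type, so that one only has to inspect the finite list of Theorem \ref{main thm}. Below I sketch a self-contained argument, parallel to Lemma \ref{EO 2n-2 not}.

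One first reduces. Since $\omega_n$ is central, the identity $X_\mu(b)=X_{\mu+\ld}(\vp^{\ld}b)$ carries the Ekedahl--Oort stratification of one side onto that of the other, so being of Coxeter type depends only on the class of $\mu$ in $\Y/\Z\omega_n$; and if $\mu$ is central then $\Adm(\mu)=\{\vp^\mu\}$, $\SAdm(\mu)_0=\{\tau_\mu\}$, and $\tau_\mu$ is vacuously $\sigma$-Coxeter in the trivial group, so the central case is of Coxeter type. Assume henceforth $\mu$ non-central, normalized so that $\mu(n)=0<\mu(1)$. One half of the defining condition is automatic: if $w\in\SAdm(\mu)$ has $W_{\supp_\sigma(w)}$ finite and $w$ is $\sigma$-Coxeter there, then by Proposition \ref{spherical} $X_w(\tau_\mu)$ is a union of copies of a nonempty classical Deligne--Lusztig variety, hence $w\in\SAdm(\mu)_0$. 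So the theorem says exactly that \emph{no other} $w\in\SAdm(\mu)$ has $X_w(\tau_\mu)\neq\emptyset$, precisely when $\mu\in\{\omega_1,\omega_{n-1},\omega_1+\omega_{n-1}\}$ or $(\mu,n)=(\omega_2,4)$.

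For the forward implication one argues the contrapositive. If $\mu$ is non-central and not in the list, then since $\SAdm(\mu')\subseteq\SAdm(\mu)$ for $\mu'\pc\mu$ it suffices to treat the $\pc$-minimal cocharacters outside the list; up to $i\mapsto n+1-i$ these fall into a few families --- $2\omega_1$, $\omega_k$ for $3\le k\le n-3$, $\omega_2$ for $n\neq4$, $\omega_1+\omega_2$ or $3\omega_1$ for small $n$, and so on --- each treatable by a uniform choice of $y\in W_0$. For such a $\mu$, one exhibits $y$ with $\vp^\mu y\in\SAdm(\mu)$, determines $\Phi_{\vp^\mu y}$ and $\LP(\vp^\mu y)$ via Lemma \ref{LPr}, and invokes the non-emptiness criterion Theorem \ref{empty} to conclude simultaneously that $X_{\vp^\mu y}(\tau_\mu)\neq\emptyset$ and that $W_{\supp_\sigma(\vp^\mu y)}$ is infinite, so $\vp^\mu y$ is not $\sigma$-Coxeter in any finite parabolic and $(\GL_n,\mu)$ is not of Coxeter type. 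This is exactly the computation performed in Lemma \ref{EO 2n-2 not} for $\omega_2+\omega_{n-2}$, carried out family by family.

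For the backward implication one describes $\SAdm(\mu)_0$ explicitly for each $\mu$ in the list, enumerating $\SAdm(\mu)$ and discarding the $w$ with $X_w(\tau_\mu)=\emptyset$ by means of Theorem \ref{empty} or the Deligne--Lusztig reduction of Proposition \ref{DL method prop}. For $\mu=\omega_1$ or $\omega_{n-1}$ (minuscule, with $\tau_\mu$ acting through a single orbit on $\tS$) one finds $\SAdm(\mu)_0=\{\tau_\mu\}$, equivalently $X_\mu(\tau_\mu)$ is $0$-dimensional, so Coxeter type holds trivially; for $\mu=\omega_1+\omega_{n-1}=(2,1,\dots,1,0)$ and $\mu=\omega_2$ with $n=4$ (both minuscule) a short further list of length-$1$ elements survives, each supported on a single $\tau_\mu\sigma$-orbit of pairwise commuting reflections of $\tS$ and hence $\sigma$-Coxeter there, and one checks that nothing longer survives. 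The main obstacle is the forward direction: confirming that the families of $\pc$-minimal exceptions are exhaustive for every $n$, and, for each, producing a $w$ whose non-emptiness through Theorem \ref{empty} can actually be certified while $\supp_\sigma(w)$ is proved too large. This $\LP$/$\supp_\sigma$ bookkeeping is precisely what the remainder of the paper develops, which is why in practice the deduction from Theorem \ref{main thm} indicated at the outset is the cleanest route.
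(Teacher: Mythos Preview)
The paper gives no proof of this theorem at all: it is stated with the parenthetical ``cf.\ \cite[Theorem 1.4]{GHN22}'' and nothing else. Your opening line that ``the quickest route is to cite it'' is therefore exactly what the paper does, and that alone matches the paper's treatment.

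Everything after your first sentence is additional content not present in the paper. A few remarks on that sketch. First, the deduction you propose from Theorem~\ref{main thm} is logically awkward within the paper's own architecture: the Coxeter-type cases $\omega_1,\omega_{n-1},\omega_1+\omega_{n-1}$ are not treated in the case-by-case analysis of \S\ref{omega2 2omega1} onward, and Theorem~\ref{geometric structure} explicitly invokes ``the cases of Coxeter type'' as a separate input, so reading the Coxeter classification back out of the positive-Coxeter one risks circularity. Second, you describe the forward direction as ``exactly the computation performed in Lemma~\ref{EO 2n-2 not}'', but that lemma exhibits a $w$ for which no $v\in\LP(w)$ makes $v^{-1}p(w)v$ Coxeter, i.e.\ it rules out \emph{positive} Coxeter type; for cocharacters such as $2\omega_1$ or $\omega_2$ with $n\neq4$, which \emph{are} of positive Coxeter type by Theorem~\ref{classification theorem}, that argument cannot apply, and one must instead locate a $w\in\SAdm(\mu)_0$ with $\supp_\sigma(w)=\tS$ directly (e.g.\ the longest element of $\SAdm(\mu)_0$ in Lemma~\ref{EO 2 empty}). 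Third, a minor slip: $\omega_1+\omega_{n-1}=(2,1,\ldots,1,0)$ is not minuscule.
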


The following theorem is the classification of positive Coxeter type for $\GL_n$.
See \cite[Theorem 7.2]{Shimada4} for the superbasic case.
\begin{theo}
\label{classification theorem}
The following assertions on $\mu$ are equivalent.
\begin{enumerate}[(i)]
\item The pair $(\GL_n,\mu)$ is of positive Coxeter type.
\item The cocharacter $\mu$ is central or one of the following forms modulo $\Z\omega_n$:
\begin{align*}
&\omega_1,\quad \omega_{n-1},\ &(n\geq 1),\\
&\omega_1+\omega_{n-1},\quad \omega_2,\quad 2\omega_1,\quad \omega_{n-2},\quad 2\omega_{n-1},\\
& \omega_2+\omega_{n-1},\quad 2\omega_1+\omega_{n-1},\quad \omega_1+\omega_{n-2},\quad\omega_1+2\omega_{n-1},\ &(n\geq 3),\\
&\omega_3,\quad\omega_{n-3},\ &(n=6,7,8),\\
&3\omega_1,\quad 3\omega_{n-1},\ &(n=3,4,5),\\
&\omega_1+\omega_2,\quad\omega_3+\omega_4,\  &(n=5),\\
&4\omega_1,\quad \omega_1+3\omega_2,\quad 4\omega_2,\quad 3\omega_1+\omega_2, &(n=3),\\
&m\omega_1\  \text{with $m\in \Z_{>0}$}, &(n=2).
\end{align*}
\end{enumerate}
\end{theo}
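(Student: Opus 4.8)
The plan is to prove the two implications separately, and for each to exploit the fact that positive Coxeter type is a condition that must hold for \emph{every} element of $\SAdm(\mu)_0$, so one bad element destroys it, while verifying it requires a uniform argument over the whole stratification. For the implication (i)$\Rightarrow$(ii), i.e.\ ruling out all cocharacters outside the list, I would argue by exhibiting for each forbidden $\mu$ an explicit ``witness'' $w\in \SAdm(\mu)_0$ whose projection $p(w)$ has no Coxeter conjugate by a length positive element. Lemma \ref{EO 2n-2 not} is exactly the prototype of such an argument for $\mu=\omega_2+\omega_{n-2}$, and the key technical tools are Lemma \ref{LPr} (to compute $\LP(w)$ in terms of $\Phi_w$) together with Theorem \ref{empty} (to certify $X_w(b)\neq\emptyset$). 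Concretely: one reduces to $\mu$ minimal with the desired property by the monotonicity $\Adm(\mu')\subseteq\Adm(\mu)$ for $\mu'\pc\mu$, so it suffices to rule out the ``boundary'' cocharacters, e.g.\ $\omega_2+\omega_{n-2}$ ($n\ge 4$), $\omega_3$ ($n\ge 9$), $2\omega_1+2\omega_{n-1}$, $3\omega_1$ ($n\ge 6$), $4\omega_1$ ($n\ge 4$), $5\omega_1$ ($n=3$), etc. For each, pick a permutation $y\in W_0$ so that $\vp^\mu y\in\SW$ lies in the admissible set but not in the admissible set of anything on the list, verify via Lemma \ref{LPr} that $\Phi_+\setminus\Phi_{\vp^\mu y}$ contains two roots $\chi_{a,c},\chi_{c,b}$ "chained through" an index $c$ that any $r$ with $r\,p(w)\,r^{-1}$ supported on a proper subset of $S$ is forced to send to an extreme position, making $r\chi_{a,c}$ or $r\chi_{c,b}$ negative; then $\supp_\sigma$ of the relevant conjugate is all of $S$ and $W_{\supp_\sigma(w)}$ infinite, so Theorem \ref{empty} gives $X_w(b)\neq\emptyset$, while $p(w)$ having an $(n-1)$-cycle that is not a Coxeter element (its support/cycle type is wrong) kills positive Coxeter part.

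For the implication (ii)$\Rightarrow$(i), the strategy is to show that for each $\mu$ on the list, every $w\in\SAdm(\mu)_0$ has positive Coxeter part. The Coxeter-type cases $\omega_1,\omega_{n-1},\omega_1+\omega_{n-1},\omega_2\ (n=4)$ are already covered (Coxeter type $\Rightarrow$ positive Coxeter type, as remarked after Theorem \ref{simple}), and the superbasic-$\tau_\mu$ cases $m\omega_1\ (n=2)$, $4\omega_1, 4\omega_2$, etc., follow from \cite[Theorem 7.2]{Shimada4}. What remains is the genuinely new non-superbasic cases: $\omega_2, 2\omega_1$ and their reflections ($n\ge 3$), $\omega_2+\omega_{n-1}, 2\omega_1+\omega_{n-1}$, etc.\ ($n\ge 3$), $\omega_3$ ($n=6,7,8$), $3\omega_1$ ($n=3,4,5$), $\omega_1+\omega_2$ ($n=5$). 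For these I would enumerate $\SAdm(\mu)_0$ explicitly — $\SAdm(\mu)$ is combinatorially small for these low $\mu$, and the non-emptiness condition $X_w(\tau_\mu)\neq\emptyset$ is decidable via Theorem \ref{empty} — and for each $w=x\vp^{\mu'}y$ produce a concrete $v\in\LP(w)$ (using the description $y\LP(w)=\{r^{-1}\mid r^{-1}\Phi_+\subset\Phi_+\cup-\Phi_w\}$ from Lemma \ref{LPr}, which gives plenty of freedom when $\Phi_w$ is large) with $v^{-1}p(w)v$ a single long cycle on its support, i.e.\ a Coxeter element of the corresponding standard Levi. The guiding principle is that when $\mu$ is ``small'' the partial permutation data forces $p(w)$ to already be close to a cycle, and length positivity provides exactly the conjugation needed to straighten it.

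The main obstacle is the case-by-case verification in the sporadic range — $\omega_3$ for $n=6,7,8$, $3\omega_1$ for $n=3,4,5$, $\omega_1+\omega_2$ for $n=5$ — where the boundary between ``positive Coxeter'' and ``not'' is delicate: these are precisely the $\mu$ for which the property holds for small $n$ but fails once $n$ grows, so no purely uniform argument can work and one must check both that the witness constructions of the first half do \emph{not} apply and that $\LP(w)$ is genuinely large enough for every admissible $w$. I expect to handle this by a finite computation: list $\SAdm(\mu)_0$, and for each $w$ either display $v\in\LP(w)$ straightening $p(w)$ to a Coxeter element, citing Lemma \ref{LPr}, or (in the forbidden larger-$n$ cases) display the obstruction via the chained-root argument above. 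The other implication's reductions (that it suffices to treat boundary $\mu$) are routine given $\Adm(\mu')\subseteq\Adm(\mu)$ and the fact that central twists and the $\omega_k\mapsto\omega_{n-k}$ duality (transpose-inverse symmetry of $\GL_n$) preserve positive Coxeter type, so the real content is concentrated in these finitely many explicit checks, which I would organize into the subsections beginning with \S\ref{omega2 2omega1}.
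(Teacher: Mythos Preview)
Your proposal is essentially the paper's own approach: for (i)$\Rightarrow$(ii) use monotonicity $\Adm(\mu')\subseteq\Adm(\mu)$ to reduce to boundary cocharacters and produce witness elements via Lemma~\ref{EO 2n-2 not}-style arguments (Lemma~\ref{LPr} plus Theorem~\ref{empty}); for (ii)$\Rightarrow$(i) invoke the $\mu\leftrightarrow -w_{\max}\mu$ duality, cite \cite{Shimada4} for the superbasic cases, and enumerate $\SAdm(\mu)_0$ explicitly in the remaining ones. One bookkeeping correction: your division into ``superbasic'' versus ``genuinely new non-superbasic'' is off---$\omega_2+\omega_{n-1}$, $2\omega_1+\omega_{n-1}$, $\omega_3$ for $n=7,8$, $3\omega_1$ for $n=4,5$, and $\omega_1+\omega_2$ for $n=5$ all have $\gcd(m,n)=1$ and hence \emph{are} superbasic, so the genuinely non-superbasic cases needing new work are only $\omega_2,2\omega_1$ for even $n$, $\omega_3$ and $3\omega_1$ for $n=6$ and $n=3$ respectively, and $m\omega_1$ for $n=2$ with $m$ even.
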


\begin{proof}
As explained in the last paragraph of \cite[\S 2.5]{Shimada4}, it is enough to treat one of $\mu$ or $-w_{\max}\mu$ for the implication (ii) $\Rightarrow$ (i).
This follows from \cite[\S 6 \& Theorem 7.2]{Shimada4} and the case-by-case analysis starting from the next section.
So we only prove the implication (i) $\Rightarrow$ (ii) here.

Let $0\le m_0<n$ be the residue of $m$ modulo $n$.
If $m_0=0$ and $n\geq 4$, then $\omega_2+\omega_{n-2}+(\mn-1)\omega_n\pc \mu$ unless $\mu=\omega_0$ or $\omega_1+\omega_{n-1}$ modulo $\Z \omega_n$.
So if $\mu\neq \omega_0,\omega_1+\omega_{n-1}$ modulo $\Z\omega_n$, then $\mu$ does not satisfy (i) by Lemma \ref{EO 2n-2 not}.
If $m_0=0$ and $n=3$, then $2\omega_1+2\omega_2+(\mn-2)\omega_n\pc \mu$ unless $\mu= \omega_0, \omega_1+\omega_2, 3\omega_1$ or $3\omega_2$ modulo $\Z\omega_n$.
It follows from Lemma \ref{empty} that $\vp^{2\omega_1+2\omega_2+(\mn-2) \omega_n}(1\ 3)\in \SAdm(\mu)$ and $X_{\vp^{2\omega_1+2\omega_2+\mn \omega_n}(1\ 3)}(b)\neq \emptyset$.
So if $\mu\neq \omega_0, \omega_1+\omega_2, 3\omega_1, 3\omega_2$ modulo $\Z\omega_n$, then $\mu$ does not satisfy (i).
If $m_0=2$ and $n=4$, then $3\omega_2+(\mn-1)\omega_n\pc \mu$ unless $\mu=\omega_2,2\omega_1$ or $2\omega_3$ modulo $\Z\omega_n$.
It follows from Lemma \ref{empty} that $\vp^{3\omega_2+(\mn-1)\omega_n}(1\ 3)(2\ 4)\in \SAdm(\mu)$ and $X_{3\omega_2+(\mn-1)\omega_n}(b)\neq \emptyset$.
So if $\mu\neq\omega_2,2\omega_1, 2\omega_3$ modulo $\Z\omega_n$, then $\mu$ does not satisfy (i).
If $m_0=2$ and $n=6$, then $\omega_3+\omega_5+(\mn-1)\omega_n\pc \mu$ unless $\mu=\omega_2$ or $2\omega_1$ modulo $\Z\omega_n$.
It is easy to check that $s_0s_1s_2s_5\tau^8\in \SAdm(\omega_3+\omega_5)$, $X_{s_0s_1s_2s_5\tau^8}(\tau^8)\neq \emptyset$ and $p(s_0s_5s_4s_1\tau^4)=(1\ 6\ 3)(2\ 4\ 5)$.
So if $\mu\neq\omega_2,2\omega_1$ (resp.\ $\mu\neq\omega_4,2\omega_5$) modulo $\Z\omega_n$, then $\mu$ does not satisfy (i).
Other cases are superbasic and follow from (the proof of) \cite[\S 6 \& Theorem 7.2]{Shimada4}.
Note that the proof there literally works for non-superbasic $b$.
\end{proof}

It is easy to check that the image of $X_\mu(b)$ under the automorphism of $\cG r$ for $G=\GL_n$ by $gK\mapsto w_{\max}{^tg}^{-1}K$ is $X_{-w_{\max}\mu}(b^{-1})$.
Also this automorphism maps a $\J_b$-stratum of $\cG r$ to a $\J_{b^{-1}}$-stratum of $\cG r$.
Thus to study the geometric structure, it is enough to treat one of $\mu$ or $-w_{\max}\mu$ modulo $\Z\omega_n$.
Thus the following theorem follows from the cases of Coxeter type (cf.\ \S\ref{J-str}), \cite[\S 6 \& Theorem 7.2]{Shimada4} and the case-by-case analysis starting from the next section.
\begin{theo}
\label{geometric structure}
Assume that $\mu$ satisfies the equivalent conditions in Theorem \ref{classification theorem}.
Then the following assertions hold:
\begin{itemize}
\item For $w\in \SAdm(\mu)_0$, $\J$ acts transitively on the set of irreducible components of $X_w(b)$.
\item For $w\in \SAdm(\mu)_0$, there exist a parahoric subgroup $P_w\subset G(L)$ and an irreducible component $Y(w)$ of $X_w(b)$ such that $\pi(X_w(b))=\bigsqcup_{j\in \J/\J\cap P_w}j \pi(Y(w))$.
\item Each $j \pi(Y(w))$ is a $\J$-stratum of $X_{\pc \mu}(b)$.
\item $Y(w)\cong \pi(Y(w))$ is universally homeomorphic to the product of a Deligne-Lusztig variety of Coxeter type and a finite-dimensional affine space.
\end{itemize}
Assume moreover that $\mu\neq \omega_2+\omega_{n-1}, 2\omega_1+\omega_{n-1}, \omega_1+\omega_{n-2}, \omega_1+2\omega_{n-1}$.
Then the closure relation can be described in terms of $\cB(\J,F)$.
\end{theo}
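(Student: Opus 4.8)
The plan is to treat the cocharacters in the list of Theorem~\ref{classification theorem} in three groups. Since the automorphism $gK\mapsto w_{\max}{}^{t}g^{-1}K$ of $\cG r$ recalled above carries the $\J_{\tau_\mu}$-stratification of $X_{\pc\mu}(\tau_\mu)$ to the analogous stratification for $-w_{\max}\mu$, it is enough to treat one of $\mu$ or $-w_{\max}\mu$ modulo $\Z\omega_n$ in each dual pair. When $(\GL_n,\mu)$ is of Coxeter type the theorem is already known: $X_{\pc\mu}(\tau_\mu)$ carries the Bruhat--Tits stratification of \S\ref{J-str}, which has all four listed properties and whose closure relation is described in terms of $\cB(\J,F)$. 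When $m=\mu(1)+\cdots+\mu(n)$ is coprime to $n$ (the superbasic case) the theorem is \cite[\S 6 \& Theorem 7.2]{Shimada4}, whose proofs work verbatim for non-superbasic $b$. So the content is the remaining, non-Coxeter non-superbasic entries, treated one by one beginning in \S\ref{omega2 2omega1}; the main tools will be G\"ortz's techniques from \cite{Gortz19} together with standard facts on ordinary Grassmannians and partial flag varieties.

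For the first, second and fourth assertions, fix $w\in\SAdm(\mu)_0$. By Theorem~\ref{classification theorem} the pair $(\GL_n,\mu)$ is of positive Coxeter type, so $w$ has positive Coxeter part, and since $b=\tau_\mu$ is basic Theorem~\ref{simple} shows that $X_w(b)$ is non-empty, that $\J$ acts transitively on its set of irreducible components (the first assertion), and that each irreducible component is an iterated $\A^1$-fibration over a Deligne--Lusztig variety of Coxeter type. For the second assertion I would identify the parahoric $P_w$ --- as in the Coxeter case, the one attached to $\supp(w)\cup\max\{J\subseteq S\mid \Ad(w)\sigma(J)=J\}$, corrected per case --- take $Y(w)$ to be a single irreducible component, and check that $\pi$ restricts to a universal homeomorphism on $Y(w)$ and that $\pi(X_w(b))=\bigsqcup_{j\in\J/\J\cap P_w}j\pi(Y(w))$. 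To upgrade the iterated $\A^1$-fibration of Theorem~\ref{simple} to an honest product of a Deligne--Lusztig variety of Coxeter type with a finite-dimensional affine space (the fourth assertion), I would write down an explicit reduction tree for each $w$, so that Proposition~\ref{decomposition} presents $X_w(b)$ through the iterated fibrations $X_{\unp}$ over the minimal-length end points $w'$ with $\Psi(w')=[\tau_\mu]$; for these $w'$ one checks $W_{\supp_\sigma(w')}$ is finite, so by Proposition~\ref{spherical} $X_{w'}(\tau_\mu)$ is a disjoint union of $\J$-translates of a classical Deligne--Lusztig variety of Coxeter type contained in one Schubert cell, whence Remark~\ref{trivial} trivialises each $\A^1$-bundle of the tree over that cell. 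Equivalently, one realises $\pi(X_w(b))$ directly in lattice coordinates on $\cG r$ and reads off the two factors.

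The third assertion --- that each $j\pi(Y(w))$ is exactly one $\J$-stratum of $X_{\pc\mu}(b)$ --- is where the real work lies, and I expect it to be the main obstacle. By the criterion of \S\ref{J-str}, two points $gK,hK$ lie in the same $\J$-stratum iff $\inv_K(j,g)=\inv_K(j,h)$ for every $j\in\J$; so, choosing coset representatives for $\J/\J^0$, one must show (i) the resulting tuple of $K$-relative positions is constant on each piece $j\pi(Y(w))$ and (ii) it separates any two distinct pieces, including pieces coming from different Ekedahl--Oort strata. Step (ii) is the delicate point: one must exclude accidental coincidences of these tuples, and there is no uniform mechanism available --- in the superbasic case it is packaged as the semi-module stratification $\cG r=\bigsqcup_{\ld\in\Y}I\vp^{\ld}K/K$ of \S\ref{J-str}, but otherwise the separation has to be carried out by direct computation in each case, using the Grassmannian coordinates set up for the second and fourth assertions.

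For the closure relation under the extra hypothesis $\mu\notin\{\omega_2+\omega_{n-1},\,2\omega_1+\omega_{n-1},\,\omega_1+\omega_{n-2},\,\omega_1+2\omega_{n-1}\}$, I would mimic the Coxeter-type argument of \S\ref{J-str}. The closure of $\pi(X_w(b))$ in $X_{\pc\mu}(b)$ is read off from the Ekedahl--Oort closure relation and from the iterated-fibration structure of the $X_{\unp}$ (which tells which $X_{w'}(b)$ occur), and combining this with the order $\geq_{S,\sigma}$ on $\SW$ restricted to $\SAdm(\mu)_0$ and with the neighbour relation among the simplices of $\cB(\J,F)$ attached to the subgroups $j(\J\cap P_w)$ should give $\overline{j\pi(Y(w))}\supseteq j'\pi(Y(w'))$ iff there exist chains $w=w_0\geq_{S,\sigma}\cdots\geq_{S,\sigma}w_k=w'$ in $\SAdm(\mu)_0$ and $j=j_0,\dots,j_k=j'$ in $\J$ with $j_{i-1}(\J\cap P_{w_{i-1}})\cap j_i(\J\cap P_{w_i})\neq\emptyset$ for all $i$. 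For the four excluded cocharacters I would instead exhibit a stratum whose closure contains some $j'\pi(Y(w'))$ with $w\not\geq_{S,\sigma}w'$, or with non-neighbouring simplices, showing the hypothesis is sharp; verifying both the positive statement and these failures is once more part of the per-case analysis.
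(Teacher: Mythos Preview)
Your overall architecture matches the paper's: reduce via the duality $\mu\leftrightarrow -w_{\max}\mu$, dispose of Coxeter type via \S\ref{J-str}, invoke \cite[\S 6 \& Theorem 7.2]{Shimada4} for the superbasic stratification, and handle the remaining non-Coxeter, non-superbasic cases one by one using reduction trees, Proposition~\ref{spherical}, Remark~\ref{trivial}, and the techniques of \cite{Gortz19}. Two points deserve correction.

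First, you over-credit \cite{Shimada4} in the superbasic case. That reference establishes the four bulleted assertions (the stratification itself), but it does \emph{not} prove the closure relation. The paper proves the ``moreover'' clause for the non-excluded superbasic cocharacters ($\omega_2$ and $2\omega_1$ with $n$ odd, $\omega_3$ with $n=7,8$, $3\omega_1$ and $\omega_1+\omega_2$ with $n=4,5$, the $n=3$ cases, etc.) by new arguments in \S\ref{omega2 2omega1}--\S7: an inductive embedding $\iota$ into Grassmannians of smaller or larger rank, equidimensionality of $X_{\pc\mu}(b)$, and the combinatorics of thin Schubert cells (which have good closure relations when $d=2$). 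So even in the superbasic case the closure relation is part of the case-by-case work here, not something inherited from \cite{Shimada4}.

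Second, your final sentence about the four excluded cocharacters is wrong. You propose to ``exhibit a stratum whose closure contains some $j'\pi(Y(w'))$ with $w\not\geq_{S,\sigma}w'$ \dots\ showing the hypothesis is sharp.'' The paper does not claim sharpness and in fact expects the opposite: immediately after the theorem it says it is natural to expect the closure relation is still describable in terms of $\cB(\J,F)$ for those $\mu$, but the number of $\J$-strata is too large to verify. Nothing needs to be (or is) proved for the excluded cases; they are simply omitted from the ``moreover'' statement.
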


It is natural to expect that the closure relation can be also described in terms of $\cB(\J,F)$ if $\mu= \omega_2+\omega_{n-1}, 2\omega_1+\omega_{n-1}, \omega_1+\omega_{n-2}, \omega_1+2\omega_{n-1}$.
Although $b$ is superbasic in all of these cases, the number of $\J$-strata is large and hence the closure relation is too complicated to study.

\section{The cases of $\omega_2$ and $2\omega_1$}
\label{omega2 2omega1}
Keep the notation above.
In this section, we set $b=\tau^2$.
Then the $F$-rank of $\J=\J_b$ is $0$ (resp.\ $1$) if $n$ is odd (resp.\ even).

\subsection{The case of $\omega_2$ when $n$ is odd}
In this subsection, we set $\mu=\omega_2$.
Assume that $n\geq 5$ is odd, i.e., $b$ is superbasic.
For any $1\le k\le \frac{n-3}{2}(=\dim X_{\mu}(b))$, set
$$\ld_k=
\begin{cases}
\chi_{1,n}^\vee+\chi_{3,n-2}^\vee+\cdots+\chi_{k,n-k+1}^\vee & (k\ \text{odd})\\
\chi_{2,n-1}^\vee+\chi_{4,n-3}^\vee+\cdots+\chi_{k,n-k+1}^\vee & (k\ \text{even}).
\end{cases}$$
We also set $\ld_0=\omega_0$.
Let $X_\mu(b)^{i}=\{gK\in X_\mu(b)\mid \kappa(g)=i\}$.
Then $X_\mu(b)^{i}$ is a closed subvariety with $X_\mu(b)=\bigsqcup_{i\in \Z} X_\mu(b)^{i}=\bigsqcup_{i\in \Z} \tau^iX_\mu(b)^0$.
\begin{prop}
\label{EO 2 odd}
We have
$$\SAdm(\mu)_0=\{\tau^2, s_0s_{n-1}\tau^2,s_0s_{n-1}s_{n-2}s_{n-3}\tau^2,\ldots, s_0s_{n-1}\cdots s_5s_4\tau^2\}.$$
Every element of $\SAdm(\mu)_0$ has positive Coxeter part.
For $0\le k\le \frac{n-3}{2}$, let $w_k$ denote the unique element in $\SAdm(\mu)_0$ of length $2k$.
Then there exists an irreducible component $Y(w_k)$ of $X_{w_k}(b)$ such that $X_{w_k}(b)=\J Y(w_k)$, $Y(w_k)\cong\pi(Y(w_k))=X_{\mu}^{\ld_k}(b)\cong \A^k$ and
$$\pi(X_{w_k}(b))=\bigsqcup_{j\in \J/\J\cap I}j X_{\mu}^{\ld_k}(b).$$
Moreover, the closure relation can be described in terms of $\cB(\J,F)$.
\end{prop}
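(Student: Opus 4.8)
The plan is to combine the Deligne--Lusztig reduction method (Proposition \ref{DL method prop}, via reduction trees and Proposition \ref{decomposition}) with the semi-module stratification to pin down $\SAdm(\mu)_0$, the geometry of each $X_{w_k}(b)$, and the $\J$-stratification simultaneously. First I would compute $\SAdm(\omega_2)$ explicitly: since $\omega_2=(1,1,0,\dots,0)$, the admissible set consists of $w\le \vp^{w_0\omega_2}$ for some $w_0\in W_0$, and intersecting with $\SW$ leaves a short list. I would then apply Theorem \ref{empty} (using the description of $\LP(w)$ in Lemma \ref{LPr}) to discard those $w$ with $X_w(\tau^2)=\emptyset$, arriving at the claimed chain $\tau^2, s_0s_{n-1}\tau^2, s_0s_{n-1}s_{n-2}s_{n-3}\tau^2,\dots$. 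For the positive Coxeter part, for each $w_k$ I would exhibit an explicit $v\in \LP(w_k)$ with $v^{-1}p(w_k)v$ a Coxeter element in $W_0$; this should follow from the combinatorial form of $w_k$, possibly quoting \cite[Theorem 4.12]{SSY23} since every minimal-length representative has positive Coxeter part and the $w_k$ are close to minimal.

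Next I would analyze each $X_{w_k}(b)$ via a reduction tree. Since $w_k$ has positive Coxeter part and $b=\tau^2$ is basic, Theorem \ref{simple} already tells us $X_{w_k}(b)$ has a single $\J$-orbit of irreducible components and each component is an iterated fibration over a Deligne--Lusztig variety of Coxeter type with all fibers $\A^1$. The content to add is: (a) identify the relevant Coxeter Deligne--Lusztig variety --- here it should degenerate to a point, so that $Y(w_k)\cong \A^k$ --- and (b) identify $\pi(Y(w_k))$ with the semi-module stratum $X_\mu^{\ld_k}(b)$. For (b) I would track, along a chosen reduction path $\unp$ with $\en(\unp)=\tau^2$, the image in $\cG r$ of the locally closed piece $X_{\unp}$ under $\pi$, and compute the relative position $\inv_K(1,g)$ of points in it against the base point; the alternating structure of $\ld_k$ (built from $\chi_{i,n-i+1}^\vee$) should emerge from the alternating choice of which simple reflection is peeled off at each step. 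Because $b$ is superbasic, the semi-module stratification coincides with the $\J$-stratification on $\cG r$ (recalled in \S\ref{J-str}), so once $\pi(Y(w_k))=X_\mu^{\ld_k}(b)$ is established, the displayed decomposition $\pi(X_{w_k}(b))=\bigsqcup_{j\in\J/\J\cap I} jX_\mu^{\ld_k}(b)$ follows from $\J^0=\J\cap I$ and Proposition \ref{spherical} (the stabilizer being $\J\cap I$ because $\supp_\sigma(w_k)$ generates the whole of the relevant parahoric, namely $I$ itself in the superbasic case). This also gives $Y(w_k)\cong\pi(Y(w_k))$.

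For the closure relation described in terms of $\cB(\J,F)$, I would first establish that the $\J$-stratification of $X_{\pc\mu}(b)$ is a stratification in the strong sense: the closure of each $jX_\mu^{\ld_k}(b)$ is a union of such strata. Using $\tau X_\mu^{\ld}(b)=X_\mu^{\tau\ld}(b)$ and the $\J$-action $\J X_\mu^{\ld}(b)=\bigsqcup_{r\in\Z}X_\mu^{\tau^r\ld}(b)$, it suffices to understand the closure of $X_\mu^{\ld_k}(b)^0$ inside $X_{\pc\mu}(b)$, which by the reduction-tree decomposition is governed by the edges $w_k\rightharpoonup w_{k-1}$ in the tree; concretely I expect $\overline{X_\mu^{\ld_k}(b)}\cap X_{\pc\mu}(b) = \bigsqcup_{k'\le k} X_\mu^{\ld_{k'}}(b)$ together with finitely many $\tau$-translates, with the $\tau$-translates that appear being exactly those forced by the neighbor condition in $\cB(\J,F)$. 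The main obstacle will be precisely this last point: showing that the closure picks up exactly the right $\tau^r$-translates of lower strata and no others --- equivalently, verifying the "neighbors in $\cB(\J,F)$" criterion (2) of \S\ref{J-str} matches the actual closure. The $\geq_{S,\sigma}$-order among the $w_k$ is a total chain and easy; the delicate part is the interaction with $\kappa$ and the building, which I would handle by an explicit computation of which $\vp^\ld K$ lie in the closure of $I\vp^{\ld_k}K/K$, using the concrete matrix description of $I$, $K$, $\tau$ for $\GL_n$ and the structure of $\ld_k$ as a sum of orthogonal coroots.
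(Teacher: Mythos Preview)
Your outline for the first parts of the proposition (determining $\SAdm(\mu)_0$, exhibiting positive Coxeter parts, and identifying $\pi(Y(w_k))$ with the semi-module stratum $X_\mu^{\ld_k}(b)$) is essentially what the paper does, except that the paper simply cites \cite[Proposition~6.5]{Shimada4} and \cite[Theorem~5.3]{Viehmann08} for all of this; your sketch is effectively reconstructing those references.

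Where your proposal genuinely diverges from the paper is in the closure relation, and here there is a gap worth flagging. In the superbasic case $P_{w_k}=I$ for all $k$, so the building condition collapses to ``same $\J^0$-coset'', and what must actually be shown is that inside the connected piece $X_\mu(b)^0$ one has
\[
\overline{X_\mu^{\ld_k}(b)}=\bigsqcup_{0\le k'\le k} X_\mu^{\ld_{k'}}(b).
\]
Your plan to compute ``which $\vp^\ld K$ lie in the closure of $I\vp^{\ld_k}K/K$'' only controls the closure of the ambient Iwahori orbit, not of the subvariety $X_\mu^{\ld_k}(b)$ sitting inside it; this gives an upper bound but not the required lower bound, and your remark that the reduction-tree edge $w_k\rightharpoonup w_{k-1}$ governs the closure works at the level of $\cF l$, not of the images under $\pi$ (where different $\J$-translates of the strata could in principle interact). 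You yourself identify this as the ``main obstacle'' and do not resolve it.

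The paper sidesteps this difficulty with a device you do not anticipate: induction on odd $n$. For the top stratum $k=\tfrac{n-3}{2}$ the closure is all of $X_\mu(b)^0$ by equidimensionality. For the remaining strata the paper observes that a suitable $\tau$-translate of $X_\mu(b)^0\setminus X_\mu^{\ld_{(n-3)/2}}(b)$ lands in a single finite Schubert cell $\cG r(\omega_{(n+5)/2})$ and then defines a closed embedding
\[
\iota\colon \cG r_{\GL_{n-2}}(\omega'_{(n+1)/2})\longrightarrow \cG r_{\GL_n}(\omega_{(n+5)/2}),\qquad g'K'\longmapsto \begin{pmatrix}\vp^{(1,1)}&0\\0&g'\end{pmatrix}K,
\]
checks by an explicit matrix identity that $\iota$ carries the semi-module strata $X_{\mu'}^{\ld'_k}(b')$ for $(\GL_{n-2},\omega'_2,\tau'^2)$ onto the corresponding strata for $(\GL_n,\omega_2,\tau^2)$, and then invokes the induction hypothesis. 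This reduces the entire closure question for $n$ to the already-established case $n-2$, with base case $n=5$ handled by equidimensionality alone. This inductive embedding is the key idea missing from your proposal; without it (or a substitute), the ``explicit matrix computation'' you propose for general $n$ is not obviously tractable.
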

\begin{proof}
Except the ``moreover'' part, the proposition follows from \cite[Proposition 6.5]{Shimada4} and \cite[Theorem 5.3]{Viehmann08}.
It is enough to check the closure relation in $X_\mu(b)^{i}$ for some $i$.
Since $\tau^{\frac{n+3}{2}}\ld_k\in W_0\omega_{\frac{n+3}{2}}$ (resp.\ $\tau^{\frac{n+5}{2}}\ld_k\in W_0\omega_{\frac{n+5}{2}}$) for $1\le k\le \frac{n-3}{2}$ (resp.\ $1\le k\le \frac{n-5}{2}$), we have 
$$X_\mu(b)^{\frac{n+3}{2}}\subset \cG r(\omega_{\frac{n+3}{2}})\quad \text{(resp.\ $(X_\mu(b)^{\frac{n+5}{2}}\setminus X_\mu^{\tau^{\frac{n+5}{2}}\ld_{\frac{n-3}{2}}}(b))\subset \cG r(\omega_{\frac{n+5}{2}})$),} $$
where $\cG r(\ld)=K\vp^\ld K/K$ for $\ld\in \Y\cong \Z^n$.

To show the closure relation, we argue by induction on odd $n$.
If $n=5$, then this follows from the equidimensionality of $X_\mu(b)$.
Assume that the closure relation can be described in terms of $\cB(\J,F)$ for $5,7,\ldots, n-2$.
Again by the equidimensionality of $X_\mu(b)$, the closure of $X_\mu^{\ld_{\frac{n-3}{2}}}$ is $X_\mu(b)^0$.
Let $K'=\GL_{n-2}(\cO)$ and let $I'$ be the standard Iwahori subgroup in it.
For $\ld'\in \Z^{n-2}$, let $\cG r'(\ld')$ denote $K'\vp^{\ld'} K'/K'$.
Let $\omega'_k=(1,\ldots,1,0,\ldots,0)\in \Z^{n-2}$ in which $1$ is repeated $k$ times.
We define $\iota\colon \cG r'(\omega'_{\frac{n+1}{2}})\rightarrow \cG r(\omega_{\frac{n+5}{2}})$ by $g'K'\mapsto \begin{pmatrix}
\vp^{(1,1)} & 0 \\
0 & g'\\
\end{pmatrix}K$.
Clearly this map is well-defined and (universally) injective.
Since $\cG r'(\omega'_{\frac{n+1}{2}})$ and $\cG r(\omega_{\frac{n+5}{2}})$ are projective over $\aFq$, $\iota$ is a universal homeomorphism onto its image (in fact, it is easy to check that $\iota$ is a monomorphism and hence a closed immersion).
Set $\mu'=\omega'_2\in \Z^{n-2}$, $\tau'={\begin{pmatrix}
0 & \vp \\
1_{n-3} & 0\\
\end{pmatrix}}\in \GL_{n-2}(F)$ and $b'=\tau'^2$.
Let $X_{\mu'}(b')^{\frac{n+1}{2}}\subset \cG r'(\omega'_{\frac{n+1}{2}})\subset \cG r_{\GL_{n-2}}$.
For $0\le k\le \frac{n-5}{2}$, we define $\ld'_k\in \Z^{n-2}$ by $\tau^{\frac{n+5}{2}}\ld_k=(1,1,\ld'_k)$.
Then $X_{\mu'}(b')^{\frac{n+1}{2}}=\bigsqcup_{0\le k\le \frac{n-5}{2}}X_{\mu'}^{\ld'_k}(b')$.
Note that the first three entries of $\ld'_k$ are $1$.
So for any $g'K'\in X_{\mu'}^{\ld'_k}(b')$ ($gK\in X_{\mu}^{\ld_k}(b)$), there exists $$h'\in \begin{pmatrix}
1_2 & 0 \\
0 & \GL_{n-4}(L)\\
\end{pmatrix}\cap I'\quad \text{(resp.\ $h\in \begin{pmatrix}
1_4 & 0 \\
0 & \GL_{n-4}(L)\\
\end{pmatrix}\cap I$ )}$$such that $g'K'=h'\vp^{\ld'_k}K'$ (resp.\ $gK=h\vp^{\ld_k}K$).
By $b=\dot s_2\dot s_3\dot s_1\dot s_2\begin{pmatrix}
1_2 & 0 \\
0 & b'\\
\end{pmatrix}$, we have
 $$\vp^{-\ld_k}\begin{pmatrix}
1_2 & 0 \\
0 & h'^{-1}\\
\end{pmatrix}b\begin{pmatrix}
1_2 & 0 \\
0 & \sigma(h')\\
\end{pmatrix}\vp^{\ld_k}=\dot s_2\dot s_3\dot s_1\dot s_2\begin{pmatrix}
1_2 & 0 \\
0 & \vp^{-\ld'_k}h'^{-1}b'\sigma(h'\vp^{\ld'_k})\\
\end{pmatrix}\in K\vp^\mu K,$$
where $\dot s_i$ denotes the permutation matrix corresponding to $s_i$.
Conversely, for $h$ above, we define $h'$ by $h=\begin{pmatrix}
1_2 & 0 \\
0 & h'\\
\end{pmatrix}$.
Then we can similarly check that $h'\vp^{\ld'_k}K'\in X_{\mu'}^{\ld'_k}(b')$.
Thus $\iota(X_{\mu'}^{\ld'_k}(b'))=X_{\mu}^{\tau^{\frac{n+5}{2}}\ld_k}(b)$ for $0\le k\le \frac{n-5}{2}$.
It follows from this and the induction hypothesis that the closure relation can be described in terms of $\cB(\J,F)$ for $n$.
By induction, this finishes the proof.
\end{proof}

\subsection{The case of $\omega_2$ when $n$ is even}
\label{EO 2 even}
In this subsection, we set $\mu=\omega_2$.
Then the $F$-rank of $\J$ is $1$.
Assume that $n\geq 4$ is even.

\begin{lemm}
\label{EO 2 empty}
We have
$$\SAdm(\mu)_0=\{\tau^2, s_0\tau^2,s_0s_{n-1}s_{n-2}\tau^2,\ldots, s_0s_{n-1}\cdots s_5s_4\tau^2\}.$$
Every element of $\SAdm(\mu)_0$ has positive Coxeter part.
\end{lemm}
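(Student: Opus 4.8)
The plan is to proceed in two stages: first identify the set $\SAdm(\mu)_0$ explicitly, and then verify the positive Coxeter part condition for each element on the list. For the first stage, recall that $\SAdm(\mu)$ is a finite, combinatorially explicit set (the $\sigma$-stable minimal-length coset representatives lying in $\Adm(\omega_2)$), so one can enumerate its elements; since $n$ is even here, $b=\tau^2$ is no longer superbasic, so I cannot directly invoke the superbasic results of \cite{Shimada4} as in Proposition \ref{EO 2 odd}, but Theorem \ref{empty} is available and applies because the Dynkin diagram of $\GL_n$ is $\sigma$-connected. So the strategy for the containment ``$\SAdm(\mu)_0 \subseteq \{\tau^2, s_0\tau^2, s_0s_{n-1}s_{n-2}\tau^2,\ldots\}$'' is: for each $w \in \SAdm(\omega_2)$ not on the list, exhibit $v \in \LP(w)$ with $\supp_\sigma(\sigma^{-1}(v)^{-1}p(w)v) \subsetneq S$ and check $W_{\supp_\sigma(w)}$ infinite, so that $X_w(b) = \emptyset$ by Theorem \ref{empty}. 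Here Lemma \ref{LPr} gives the usable description of $\LP(w)$ in terms of the combinatorial set $\Phi_w$, which should make the search for such $v$ mechanical.

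For the reverse containment, I must show each listed $w_k = s_0 s_{n-1}s_{n-2}\cdots s_{n-2k+2}\tau^2$ (with the convention $w_0 = \tau^2$, $w_1 = s_0\tau^2$) has $X_{w_k}(b) \neq \emptyset$. The cleanest route is to observe that $w_k$ is a $\sigma$-Coxeter element in $W_{\supp_\sigma(w_k)}$ with $W_{\supp_\sigma(w_k)}$ finite; then Proposition \ref{spherical} (or the non-emptiness half of Theorem \ref{empty}) immediately gives non-emptiness of $X_{w_k}(\tau)$, noting $\kappa(w_k) = \kappa(b)$ which fixes the Kottwitz invariant obstruction. One should double-check that $\supp_\sigma(w_k)$ is a proper $\tau\sigma$-stable subset of $\tS$ whose associated parabolic is of finite type — this is where the specific shape of the chain $s_0, s_{n-1}, s_{n-2}, \ldots$ and the $\tau\sigma$-action matter, and it is essentially the reason only every other reflection appears in the chain (as in the odd case, the chain skips to keep the support from wrapping all the way around the affine diagram).

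For the positive Coxeter part claim, I need $v \in \LP(w_k)$ with $v^{-1}p(w_k)v$ a genuine Coxeter element of $W_0 = S_n$. Writing $w_k = x\vp^\mu y$ in the normal form of \S\ref{LP}, one computes $p(w_k) = xy$ explicitly as a permutation; for $\mu = \omega_2$ this permutation and the candidate $v$ should be readable off directly, and the verification that $v \in \LP(w_k)$ reduces via Lemma \ref{LPr} to checking $v^{-1}\Phi_+ \subseteq \Phi_+ \cup -\Phi_{w_k}$, a finite root-system inequality. I expect the main obstacle to be the bookkeeping in the non-superbasic even case: the $F$-rank of $\J$ jumps to $1$, so the structure of $\SAdm(\mu)_0$ and the relevant parahorics differ from the odd/superbasic picture, and one cannot simply quote \cite[Proposition 6.5]{Shimada4}. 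Concretely, the delicate point will be confirming that no extra elements of $\SAdm(\omega_2)$ survive — i.e., that the emptiness argument via Theorem \ref{empty} genuinely covers all $w$ off the list — since a single missed case would break the equality. Once the list is pinned down, the positive-Coxeter-part verification is routine case-checking of the kind already carried out in Lemma \ref{EO 2n-2 not}.
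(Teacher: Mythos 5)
Your argument for the emptiness direction (Theorem~\ref{empty} plus Lemma~\ref{LPr}) is in the right spirit and matches the paper. The reverse containment, however, has a genuine gap. You claim each listed $w_k = s_0 s_{n-1}\cdots s_{n-2k+2}\tau^2$ has $W_{\supp_\sigma(w_k)}$ finite, so that Proposition~\ref{spherical} yields non-emptiness directly. This holds only for $\tau^2$ and $s_0\tau^2$. For $k \ge 2$ the support $\{s_0, s_{n-1}, \ldots, s_{n-2k+2}\}$ consists of \emph{consecutive} affine simple reflections (your remark that ``the chain skips'' is incorrect), and it meets both $\tau^2$-orbits $\{s_0, s_2, \ldots, s_{n-2}\}$ and $\{s_1, s_3, \ldots, s_{n-1}\}$ as soon as two adjacent reflections appear; hence $\supp_\sigma(w_k)=\tS$ and $W_{\supp_\sigma(w_k)}=W_a$ is infinite. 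This is in fact forced by the classification: $(\GL_n,\omega_2)$ for even $n\ge 6$ is of positive Coxeter type but \emph{not} of Coxeter type, so the finite-$\sigma$-support picture cannot persist across all of $\SAdm(\mu)_0$.

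The correct route for $k\ge 2$ is to negate the \emph{other} clause of Theorem~\ref{empty}: show that $\supp_\sigma(\sigma^{-1}(v)^{-1}p(w)v)=S$ for \emph{every} $v\in\LP(w)$. The paper does this by writing the relevant elements as $\vp^\mu u_k$ with $u_1=s_2\cdots s_{n-1}s_1$, $u_k=u_1 s_2\cdots s_k$, and exploiting a parity dichotomy in the cycle structure: for odd $k$, $u_k$ is an $n$-cycle, so every conjugate $v^{-1}u_kv$ is an $n$-cycle and has full support $S$ regardless of $v$ --- condition~(ii) of Theorem~\ref{empty} fails automatically and $X_{\vp^\mu u_k}(b)\ne\emptyset$; for even $k$ (with $\supp_\sigma=\tS$), $u_k$ is a product of two disjoint cycles, and the computation $\Phi_+\setminus\Phi_{\vp^\mu u_k}=\{\chi_{1,k+3},\ldots,\chi_{1,n}\}$ combined with Lemma~\ref{LPr} produces a conjugate of proper support, hence emptiness. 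Your positive-Coxeter-part plan via Lemma~\ref{LPr} is fine and in fact trivializes once the cycle structure is known: $v=u_k^{-1}\in\LP(\vp^\mu u_k)$ always, and $v^{-1}p(w)v=u_k$ is already an $n$-cycle for the surviving (odd) $k$.
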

\begin{proof}
Set $u_1=s_2s_3\cdots s_{n-1}s_1$.
For $2\le k\le n-2$, we also set $u_k=u_1s_2s_3\cdots s_k$.
Then $\vp^\mu u_k\in \SAdm(\mu)$.
Moreover it is easy to check that $w\in \SAdm(\mu)$ satisfies $\supp_{\sigma}(w)\neq \tS$ if and only if $w$ is $\vp^\mu u_{n-2}=\tau^2$ or $\vp^\mu u_{n-3}=s_0\tau^2$.
So it follows from Theorem \ref{empty} that $\SAdm(\mu)_0\subseteq \{\vp^\mu u_1,\ldots,\vp^\mu u_{n-2}\}$.

For even $k$, we have $$u_k=(1\ 3\ \cdots\ k+1)(2\ 4\ \cdots\ k+2\ k+3\ \cdots\ n-1\  n).$$
For odd $k$, we have $$u_k=(1\ 3\ \cdots\ k+2\ k+3\ \cdots\ n\ 2\ 4\ \cdots\ k+1).$$
It is easy to check that $\Phi_+\setminus \Phi_{\vp^\mu u_k}=\{\chi_{1,k+3},\ldots,\chi_{1,n-1},\chi_{1,n}\}.$
In particular, $r(\Phi_+\setminus \Phi_{\vp^\mu u_k})\subset \Phi_+$ for $r\in W_{\{s_2,\ldots,s_{n-1}\}}$.
Again from Theorem \ref{empty}, it follows that $X_{\vp^\mu u_k}(b)=\emptyset$ for even $2\le k\le n-2$.
We can also check that other $\vp^\mu u_k$ has positive Coxeter part.
This finishes the proof.
\end{proof}

\begin{coro}
\label{EO 2 Gm}
Set $v_{k,l}=s_{n-l}s_{n-l-1}\cdots s_{n-2k+l+1}\tau^2$ for $2\le k\le \frac{n-2}{2}$ and $1\le l\le k-1$.
Then $X_{v_{k,l}}(b)=\emptyset$.
\end{coro}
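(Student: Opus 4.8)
The plan is to deduce the emptiness of $X_{v_{k,l}}(b)$ from the non-emptiness criterion in Theorem \ref{empty}, exactly in the spirit of the proof of Lemma \ref{EO 2 empty}. First I would write each $v_{k,l}$ in the standard form $x\vp^{\nu}y$ with $\nu$ dominant and $\vp^{\nu}y\in\SW$; since $v_{k,l}=s_{n-l}\cdots s_{n-2k+l+1}\tau^2$ is obtained from $\tau^2=\vp^{\omega_2}u_{n-2}$ by left-multiplying by a product of finite simple reflections, one checks that $v_{k,l}=\vp^{\omega_2}u'$ for an explicit permutation $u'\in W_0$ (a relative of the cycles $u_k$ appearing in Lemma \ref{EO 2 empty}), so in particular $v_{k,l}$ lies in $\SAdm(\mu)$ with $\mu=\omega_2$ and has the same Kottwitz point as $b=\tau^2$.

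Next I would compute the ``defect'' set $\Phi_{v_{k,l}}$ and hence $\Phi_+\setminus\Phi_{v_{k,l}}$ via Lemma \ref{LPr}. The expectation, parallel to the computation $\Phi_+\setminus\Phi_{\vp^\mu u_k}=\{\chi_{1,k+3},\dots,\chi_{1,n}\}$ in Lemma \ref{EO 2 empty}, is that $\Phi_+\setminus\Phi_{v_{k,l}}$ is again a small set of roots all sharing a common index (after conjugating by the finite reflections we have introduced, it should be a set of the form $\{\chi_{a,b}\mid b\in B\}$ for some fixed $a$ and some index set $B$, or the transpose thereof). Once this is known, Lemma \ref{LPr} identifies $y\LP(v_{k,l})$ with the set of $r^{-1}$ such that $r(\Phi_+\setminus\Phi_{v_{k,l}})\subset\Phi_+$; in particular every $v\in\LP(v_{k,l})$ will send the ``missing'' roots to positive roots, which forces $\supp_\sigma\bigl(\sigma^{-1}(v)^{-1}p(v_{k,l})v\bigr)$ to omit at least one simple reflection — the verification of condition (ii) of Theorem \ref{empty}. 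Condition (i), that $|W_{\supp_\sigma(v_{k,l})}|$ is infinite, should be immediate from the fact that $v_{k,l}\notin\{\tau^2,s_0\tau^2\}$ (these being the only elements of $\SAdm(\mu)$ with proper $\sigma$-support, by Lemma \ref{EO 2 empty}), so $\supp_\sigma(v_{k,l})=\tS$ and $W_{\tS}$ is infinite.

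The main obstacle I anticipate is purely bookkeeping: keeping track of the precise cycle structure of the permutation underlying $v_{k,l}$ as $l$ ranges over $1,\dots,k-1$, and correspondingly pinning down $\Phi_+\setminus\Phi_{v_{k,l}}$ cleanly enough that the existence of a simple reflection missing from every $\sigma$-conjugate is transparent rather than requiring a separate check for each $r\in\LP$. I would handle this by the same device used in Lemma \ref{EO 2 empty}: show that $\Phi_+\setminus\Phi_{v_{k,l}}$ is stabilized (as a set of positive roots) by a large standard parabolic $W_{J}$ with $J\subsetneq S$, so that for every $v\in\LP(v_{k,l})$ one has $v\alpha\in\Phi_+$ for all $\alpha\in\Phi_+\setminus\Phi_{v_{k,l}}$, and then observe that $p(v_{k,l})$ together with these constraints cannot have full $\sigma$-support. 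Since the conclusion is only emptiness (not a dimension or geometric statement), no further geometric input is needed beyond Theorem \ref{empty}.
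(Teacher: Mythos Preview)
Your plan is workable in principle but takes a different and heavier route than the paper, and it contains a false claim that would derail the bookkeeping if you followed it literally.

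The paper does \emph{not} rerun the Theorem~\ref{empty} computation for $v_{k,l}$. Instead it observes a $\sigma$-conjugacy in the cyclic-shift sense:
\[
v_{k,l}\ \approx_\sigma\ s_0 s_{n-1}\cdots s_{n-2(k-l)+1}\tau^2,
\]
and the right-hand side is one of the elements $\vp^\mu u_j$ with $j$ even already shown in Lemma~\ref{EO 2 empty} to have empty affine Deligne--Lusztig variety. Proposition~\ref{DL method prop}(i) (universal homeomorphism under $\approx_\sigma$) then gives $X_{v_{k,l}}(b)=\emptyset$ immediately. So the paper's proof is a one-line reduction, with no new $\LP$ or support analysis.

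Your direct approach via Theorem~\ref{empty} could be made to work, but the assertion ``$v_{k,l}\in\SAdm(\mu)$'' is false: since $l\ge 1$ all the simple reflections in $s_{n-l}\cdots s_{n-2k+l+1}$ lie in $S$, and $s_{n-l}v_{k,l}=s_{n-l-1}\cdots s_{n-2k+l+1}\tau^2$ is shorter, so $v_{k,l}\notin\SW$. Consequently the canonical form is $v_{k,l}=x\vp^{\omega_2}y$ with $x\neq 1$, and neither your computation of $\Phi_+\setminus\Phi_{v_{k,l}}$ nor your verification of condition~(i) (which you deduce from the $\SAdm(\mu)$ classification in Lemma~\ref{EO 2 empty}) goes through as written. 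Condition~(i) is still easy to verify directly---the indices $n-2k+l+1,\dots,n-l$ contain at least two consecutive integers, so both $\tau^2$-orbits in $\tS$ are hit and $\supp_\sigma(v_{k,l})=\tS$---and condition~(ii) can eventually be checked since $p(v_{k,l})$ has the same cycle type as the even-index $u_j$; but the computation of $\LP(v_{k,l})$ via Lemma~\ref{LPr} is genuinely messier than in the $x=1$ case you were modelling it on. The paper's $\approx_\sigma$ trick bypasses all of this.
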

\begin{proof}
We keep the notation in the proof of Lemma \ref{EO 2 empty}.
It is easy to check that $v_{k,l}\approx_\sigma s_0s_{n-1}\cdots s_{n-2(k-l)+1}\tau^2=\vp^\mu u_{n-2(k-l-1)}$.
Then the statement follows from Proposition \ref{DL method prop} and Lemma \ref{EO 2 empty}.
\end{proof}

For $w\in \SW$, set $S_w=\max\{S'\subseteq S\mid \Ad(w)(S')=S'\}$.
Clearly $S_{\tau^2}=\{s_1,s_3,\ldots,s_{n-1}\}$.
\begin{coro}
\label{EO 2 Sw}
Let $\vp^\mu y\in \SAdm(\mu)\setminus \{\tau^2\}$ such that $\ell(y)\geq n-1$.
Then $S_{\vp^\mu y}=\emptyset$.
\end{coro}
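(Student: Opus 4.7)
The plan is to directly analyze conjugation by $\vp^\mu y$ on simple reflections. For any $s_i\in S$, one has $\Ad(\vp^\mu y)(s_i)=t_{\mu-s'(\mu)}s'$ with $s'=(y(i),y(i+1))$, and this lies in $S$ precisely when $s'$ is a simple transposition fixing $\mu$. Since $\mu=\omega_2=(1,1,0,\ldots,0)$, the simple transpositions fixing $\mu$ are exactly $S\setminus\{s_2\}$; thus $\Ad(\vp^\mu y)(s_i)\in S$ forces $\{y(i),y(i+1)\}\subseteq\{1,2\}$ or $\{y(i),y(i+1)\}\subseteq\{3,\ldots,n\}$ with $|y(i+1)-y(i)|=1$. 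In particular $s_2$ is never an image, so any $\Ad(\vp^\mu y)$-stable $S'\subseteq S$ must lie in the Dynkin subdiagram $S\setminus\{s_2\}$, which has two connected components $\{s_1\}$ and $\{s_3,\ldots,s_{n-1}\}$.

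Next I would enumerate the $\vp^\mu y\in\SAdm(\mu)\setminus\{\tau^2\}$ with $\ell(y)\geq n-1$, equivalently $\ell(\vp^\mu y)\leq n-3$. Those belonging to $\SAdm(\mu)_0$ are, by Lemma \ref{EO 2 empty}, precisely $\{\vp^\mu u_k:1\leq k\leq n-3\}$ (each $u_k$ has length $n+k-2\geq n-1$). For each such $u_k$, the explicit cycle description in the proof of Lemma \ref{EO 2 empty} yields that $\Ad(\vp^\mu u_k)$ acts as $s_i\mapsto s_{i+2}$ for $1\leq i\leq k-1$ and $s_i\mapsto s_{i+1}$ for $k+2\leq i\leq n-2$, while every other $s_i$ is sent outside $S$. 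Both of these partial maps are strictly index-increasing, so a hypothetical cycle $s_{i_1}\to\cdots\to s_{i_m}\to s_{i_1}$ would give $0=\sum_{r}(i_{r+1}-i_r)>0$, a contradiction; hence $S_{\vp^\mu u_k}=\emptyset$.

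For the remaining $\vp^\mu y\in\SAdm(\mu)\setminus\SAdm(\mu)_0$ in the same length range (including the $v_{k,l}$ of Corollary \ref{EO 2 Gm}), a parallel direct computation applies: each such $y$ can be written as a product of $p(\tau^2)$ with a short sequence of simple reflections, from which one reads off the cycle description of $y$ as a permutation of $\{1,\ldots,n\}$ and verifies that every chain of $\Ad(\vp^\mu y)$ in $S\setminus\{s_2\}$ eventually leaves $S$, again precluding a cycle. The main obstacle is to handle this non-$\SAdm(\mu)_0$ part efficiently rather than by case-by-case enumeration; an attractive conceptual shortcut is to note that a nonempty $\Ad(\vp^\mu y)$-stable $S'\subseteq S\setminus\{s_2\}$ forces $\vp^\mu y$ to normalize the finite parabolic $W_{S'}$, which, combined with $\vp^\mu y\in\SAdm(\mu)\setminus\{\tau^2\}$ and the length bound $\ell(\vp^\mu y)\leq n-3$, is readily seen to be impossible.
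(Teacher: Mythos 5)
Your general criterion (that $\Ad(\vp^\mu y)(s_i)=\vp^{\mu-s'\mu}s'$ with $s'=(y(i)\ y(i+1))$ lies in $S$ exactly when $s'$ is a simple transposition fixing $\mu$, so in particular $s_2$ is never in the image) is correct, and your analysis of the elements $\vp^\mu u_k$ — the partial map on $S$ is strictly index-increasing, hence admits no nonempty stable subset — is exactly the ``explicit computation'' the paper alludes to; the paper's own proof consists of asserting that $y=u_k$ and stopping there.

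The gap is in your third step, and it cannot be repaired as written. You correctly observe (and the paper does not) that the set $\{y\mid \vp^\mu y\in\SAdm(\mu),\ \ell(y)\geq n-1\}$ is strictly larger than $\{u_1,\ldots,u_{n-2}\}$: writing $\{a,b\}=\{y^{-1}(1),y^{-1}(2)\}$ with $a<b$, the condition $\vp^\mu y\in\SW$ forces $y^{-1}(1)<y^{-1}(2)$ and $y^{-1}(3)<\cdots<y^{-1}(n)$, one computes $\ell(y)=a+b-3$, and the $u_k$ are precisely the elements with $b=n$. But both of your proposed ways of disposing of the extra elements — ``every chain eventually leaves $S$'' and the ``readily seen to be impossible'' normalizer argument — are unsubstantiated assertions, and in fact both are false. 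Take even $n\geq 8$ and $\{a,b\}=\{n-3,n-2\}$, e.g.\ $n=8$ and $y=(1\ 3\ 5)(2\ 4\ 6)$. Then $\vp^\mu y\in\SW$, $\vp^\mu y\le \vp^{y^{-1}\mu}$ so $\vp^\mu y\in\SAdm(\mu)$, and $\ell(y)=2n-8\geq n-1$, while $\ell(\vp^\mu y)=4>0$ so $\vp^\mu y\neq\tau^2$; yet $y(n-1)=n-1$ and $y(n)=n$, so $\Ad(\vp^\mu y)(s_{n-1})=s_{n-1}$ is a fixed point (for $n=8$ one even has the stable cycle $s_1\to s_3\to s_5\to s_1$), whence $S_{\vp^\mu y}\neq\emptyset$. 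So the corollary in the stated generality fails for even $n\geq 8$; it holds only on the restricted set $\{\vp^\mu u_k\}$ that the paper actually treats (for $n=4,6$ the remaining elements happen to satisfy the conclusion too, but not beyond). Any complete argument must therefore either add the hypothesis $y=u_k$ (equivalently $y^{-1}(2)=n$), or, if the full statement is to be used downstream, check separately that the extra elements with $S_{\vp^\mu y}\neq\emptyset$ do not interfere with the application in Proposition \ref{EO 2 even}; your proposal does neither.
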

\begin{proof}
We keep the notation in the proof of Lemma \ref{EO 2 empty}.
Then $y=u_k$ for some $1\le k\le n-3$, and the lemma follows from explicit computation.
\end{proof}

There are two $\tau^2$-orbit in $\tS$, namely, $\{s_0,s_2,\ldots, s_{n-2}\}$ and $\{s_1,s_3,\ldots, s_{n-1}\}$.
Let $P_{0}$ (resp.\ $P_1$) denote the standard parahoric subgroup of $G(L)$ corresponding to the former (resp.\ latter) orbit.
Then $W_a^{\tau^2}$ is the Weyl group with two simple reflections $s_0s_2\cdots s_{n-2}$ and $s_1s_3\cdots s_{n-1}$.
For $j,j'\in \J^0$, we have $\inv(j,j')\in W_a^{\tau^2}$.

\begin{lemm}
\label{EO 2 Iwahori}
For $1\le k\le \frac{n-2}{2}$, let $w_k$ denote the unique element in $\SAdm(\mu)_0$ of length $2k-1$ (more precisely, $w_k=s_0s_{n-1}\cdots s_{n-2(k-1)}\tau^2$).
For any $w_k$, there exists an irreducible component $Y(w_k)$ of $X_{w_k}(b)$ such that $$X_{w_k}(b)=\bigsqcup_{j\in\J/\J\cap P_{w_k}}jY(w_k),$$
where $P_{w_k}=P_0$ (resp.\ $P_1$) if $k$ is odd (resp.\ even).
Moreover each $Y(w_k)$ is universally homeomorphic to $(\mathbb P^1\setminus \mathbb P^1(\F_{q^{\frac{n}{2}}}))\times \A^{k-1}$, and contained in a $\J$-stratum in $G(L)/I$.
\end{lemm}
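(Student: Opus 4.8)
The starting point is Lemma \ref{EO 2 empty}: every $w_k$ has positive Coxeter part, so Theorem \ref{simple} already tells us that $X_{w_k}(b)$ has a single $\J$-orbit of irreducible components, each universally homeomorphic to an iterated fibration over a Deligne--Lusztig variety of Coxeter type all of whose iterated fibers are $\A^1$ (since $b=\tau^2$ is basic). What remains is to pin down the invariants: which Coxeter Deligne--Lusztig variety occurs, how many $\A^1$-fibers there are, which parahoric stabilizes a component, and the fact that each component lies in a single $\J$-stratum. The plan is to obtain all of this by an explicit Deligne--Lusztig reduction followed by Proposition \ref{spherical}.

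First I would reduce $w_k$ step by step via Proposition \ref{DL method prop}. Starting from $w_k=s_0s_{n-1}\cdots s_{n-2(k-1)}\tau^2$ one performs a string of length-preserving conjugations by $s_0,s_2,s_4,\dots$ (rotating the ``leading'' simple reflection through the $\tau^2$-orbit of $s_0$) until a conjugation by some $s$ makes $\tau^2 s\tau^{-2}$ cancel the rightmost reflection, producing a drop of length two. At each such type~II step, the open ($\G_m$-)part maps to $X_v(b)$ for some $v\in W_a\tau^2$ that is $\approx_\sigma$-equivalent to one of the elements $v_{k,l}$ of Corollary \ref{EO 2 Gm} (with a few extra emptiness checks of the same flavour via Theorem \ref{empty}), hence is empty; so only the closed ($\A^1$-)part survives, and one gets a $\J$-equivariant universal homeomorphism from $X_{w_k}(b)$ onto a Zariski-locally trivial $\A^1$-bundle over $X_{w_k^{(1)}}(b)$, where $w_k^{(1)}$ has length $2k-3$ and the same shape. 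Iterating $k-1$ times lands on the length-one element $w_k'=s_{i_k}\tau^2$, where bookkeeping of indices modulo $n$ shows $i_k$ is even exactly when $k$ is odd. Thus $X_{w_k}(b)$ is $\J$-equivariantly universally homeomorphic to a rank-$(k-1)$ iterated $\A^1$-bundle over $X_{w_k'}(b)$.

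Next I would analyse the base $X_{w_k'}(b)$. The $\sigma$-support of $w_k'=s_{i_k}\tau^2$ is the $\tau^2$-orbit of $s_{i_k}$, which is $\{s_0,s_2,\dots,s_{n-2}\}$ or $\{s_1,s_3,\dots,s_{n-1}\}$; its Weyl group is $(\Z/2)^{n/2}$, hence finite, with associated parahoric $P_0$ resp.\ $P_1$. Proposition \ref{spherical} then gives $X_{w_k'}(b)=\bigsqcup_{j\in\J/\J\cap P_{w_k}}jY(w_k')$ with $P_{w_k}=P_0$ when $k$ is odd and $P_1$ when $k$ is even, and with $Y(w_k')$ the classical Deligne--Lusztig variety attached to $s_{i_k}$ inside $P_{w_k}/I\cong(\mathbb P^1)^{n/2}$. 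Since $\tau^2$ cyclically permutes the $n/2$ factors $\mathbb P^1$ and $s_{i_k}$ acts by the nontrivial element in one factor, Lang's theorem (restriction of scalars along $\F_{q^{n/2}}/\F_q$) identifies this Deligne--Lusztig variety with the Drinfeld curve $Y(w_k')\cong\mathbb P^1\setminus\mathbb P^1(\F_{q^{n/2}})$; here $\F_{q^{n/2}}$ is the residue field of the division algebra $D$ for which $\J\cong\GL_2(D)$, and $P_0,P_1$ correspond to the two types of vertex of the tree $\cB(\J,F)$.

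Finally, let $Y(w_k)$ be the preimage of $Y(w_k')$ under the rank-$(k-1)$ iterated $\A^1$-bundle; then $X_{w_k}(b)=\bigsqcup_{j\in\J/\J\cap P_{w_k}}jY(w_k)$, and since $Y(w_k)$ is irreducible (an $\A^{k-1}$-bundle over the irreducible curve $Y(w_k')$) and the $jY(w_k)$ are disjoint and exhaust $X_{w_k}(b)$, each is an irreducible component. As $Y(w_k')$ avoids every $\F_q$-rational point of $(\mathbb P^1)^{n/2}$ it lies in the big Schubert cell of $P_{w_k}/I$, and the reduction morphisms in Proposition \ref{DL method prop} are trivial over Schubert cells (Remark \ref{trivial}); granting that the intermediate pieces also stay inside single Schubert cells, the iterated bundle is trivial over $Y(w_k')$, giving $Y(w_k)\cong Y(w_k')\times\A^{k-1}\cong(\mathbb P^1\setminus\mathbb P^1(\F_{q^{n/2}}))\times\A^{k-1}$. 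For the $\J$-stratum assertion one checks that $j\mapsto\inv(j,\cdot)$ is constant on $Y(w_k)$: this holds on $Y(w_k')$ by the argument of \cite{Gortz19} identifying the Bruhat--Tits and $\J$-stratifications (it lies in a single Schubert cell of $P_{w_k}/I$, and the building of $\J$ handles the remaining $j$), and it is preserved by pulling back through the Schubert-cell-preserving reduction maps. I expect the main obstacle to be the reduction in the second paragraph --- producing the explicit path and verifying the emptiness of every open part, including those not literally covered by Corollary \ref{EO 2 Gm}, and the bookkeeping fixing the parity of $i_k$ --- with the triviality and $\J$-stratum statements of the last paragraph requiring some further care with relative positions.
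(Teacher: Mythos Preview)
Your proposal is correct and follows essentially the same route as the paper: an explicit Deligne--Lusztig reduction to a length-one element $s_{n-k+1}\tau^2$, emptiness of all type~I branches via Corollary~\ref{EO 2 Gm} (no ``extra'' checks are needed---the paper's explicit conjugation elements $t_{a,b}$ make the open parts land precisely on the $v_{k,l}$), Proposition~\ref{spherical} for the base, and the gate argument from \cite{Gortz19} for the $\J$-stratum claim. The points you flag as needing care---the Schubert-cell containment of the intermediate pieces and the propagation of $\inv(j,-)$ through the reduction---are handled in the paper by a single length-additivity computation $\ell(v_k t_{n-k+2,n-k}\cdots t_{0,n-2(k-1)})=\ell(v_k)+\sum\ell(t_{\cdots})$, which simultaneously gives the trivialization over $Y(s_{n-k+1}\tau^2)\subset Iv_kI/I$ via Remark~\ref{trivial} and the constancy of $\inv(j,y)=\inv(j,g)v_k t_{n-k+2,n-k}\cdots t_{0,n-2(k-1)}$.
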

\begin{proof}
For an integer $a$, let $0\le [a]<n$ denote its residue modulo $n$.
For $a,b\in \N$ with $a-b\in 2\Z$, we define
$t_{a,b}=s_{[b-2]}\cdots s_{[a+2]}s_{[a]}$ ($t_{a,b}=1$ if $a-b\in n\Z$).
Set
\begin{align*}
w_{k,0}=w_k,w_{k,1}=t_{0,n-2(k-1)}w_kt_{0,n-2(k-1)}^{-1}, w_{k,2}=t_{n-1, n-2k+3}w_{k,2}t_{n-1, n-2k+3}^{-1},\\
\ldots, w_{k,k-1}=t_{n-k+2,n-k}w_{k,k-2}t_{n-k+2,n-k}^{-1}.
\end{align*}
Clearly (the simple reflections in the support of) $t_{0,n-2(k-1)},t_{n-1, n-2k+3},\ldots, t_{n-k+2,n-k}$ define
\begin{align*}
w_k=w_{k,0}\rightarrow_{\sigma}w_{k,1}=s_{n-1}s_{n-2}\cdots s_{n-2k+3}\tau^2&\rightarrow_{\sigma}w_{k,2}=s_{n-2}s_{n-3}\cdots s_{n-2k+4}\tau^2\\
&\rightarrow_{\sigma}\cdots \rightarrow_{\sigma} w_{k,k-1}=s_{n-k+1}\tau^2.
\end{align*}
Let $\unp_k$ be the reduction path (in a suitable reduction tree) defined by this reduction.
By Proposition \ref{decomposition} and Corollary \ref{EO 2 Gm}, we have $X_{w_k}(b)=X_{\unp_k}$ with $\en(\unp_k)=s_{n-k+1}\tau^2$.
Let $f\colon X_{w_k}(b)\rightarrow X_{s_{n-k+1}\tau^2}(b)$ be the morphism induced by Proposition \ref{DL method prop}.
By Proposition \ref{spherical}, we have $$X_{s_{n-k+1}\tau^2}(b)=\bigsqcup_{j\in \J/\J\cap P_{w_k}} jY(s_{n-k+1}\tau^2),$$
where  $Y(s_{n-k+1}\tau^2)=\{gI\in P_{w_k}/I\mid g^{-1}\tau^2 \sigma(g)\tau^{-2}\in Is_{n-k+1}I\}$ is a classical Deligne-Lusztig variety in the finite-dimensional flag variety $P_{w_k}/I$.
If $k$ is odd (resp.\ even), set $v_k=s_0s_2\cdots s_{n-2}$ (resp.\ $s_1s_3\cdots s_{n-1}$).
Then by \cite[Corollary 2.5]{Lusztig76} (see also \cite[Proposition 1.1]{Gortz19}), $Y(s_{n-k+1}\tau^2)$ is contained in $Iv_kI/I$.
Moreover, it is easy to check that 
\begin{align*}
&\ell(v_kt_{n-k+2,n-k}\cdots t_{n-1,n-2k+3}t_{0,n-2(k-1)})\\
=&\frac{n}{2}+\ell(t_{n-k+2,n-k})+\cdots+ \ell(t_{n-1,n-2k+3})+\ell(t_{0,n-2(k-1)}).
\end{align*}
Indeed $v_kt_{n-k+2,n-k}\cdots t_{n-k+l+1,n-k-l+1}\alpha_i=v_{k}v_{k+1}\cdots v_{k+l}\alpha_i$ for $l\in \Z_{\geq 0}$, where $\alpha_i$ is the simple affine root corresponding to $i\in \supp(t_{n-k+l+2,n-k-l})$.
Since $W_a^{\tau^2}$ is the Weyl group with two simple reflections $v_k$ and $v_{k+1}$, $v_{k}v_{k+1}\cdots v_{k+l}\alpha_i>0$.
We set $Y(w_k)=f^{-1}(Y(s_{n-k+1}\tau^2))$.
Then by Proposition \ref{DL method prop}, we have 
\begin{align*}
Y(w_k)\subset Iv_kt_{n-k+2,n-k}\cdots t_{n-1,n-2k+3}t_{0,n-2(k-1)}I/I.
\end{align*}
Note that $(\tau^2)^{\frac{n}{2}}=\vp^{(1,\ldots,1)}$ belongs to the center of $G(L)$.
It follows from this fact that $Y(s_{n-k+1}\tau^2)\cong \mathbb P^1\setminus \mathbb P^1(\F_{q^{\frac{n}{2}}})$.
Thus by Remark \ref{trivial}, $Y(w_k)$ is universally homeomorphic to $(\mathbb P^1\setminus \mathbb P^1(\F_{q^{\frac{n}{2}}}))\times \A^{k-1}$.

It remains to show that for all $j\in \J$, the value $\inv(j,-)$ is constant on each $j'Y(w_k)$.
For this, we argue similarly as \cite[\S 3.3]{Gortz19}.
Clearly we may assume $j'=1$.
For any $j\in \J$, there exists $\tilde j\in \J^0$ such that $\inv(j, \tilde j)\in\Omega$.
So we may also assume $j\in \J^0$.
Fix $jI$ with $j\in \J^0$.
Then by \cite[Corollary 2.5]{Lusztig76} and \cite[Proposition 5.34]{AB08} (see also \cite[Proposition 1.7]{Gortz19}), there exists $gI$ with $g\in \J^0\cap P_{w_k}$ (called the ``gate'') such that for any $y_0I\in Y(s_{n-k+1}\tau^2)$, we have
\begin{align*}
\inv(j, y_0)=\inv(j,g)v_k(\in W_a)\quad \text{with}\quad \ell(\inv(j, y_0))=\ell(\inv(j,g))+\ell(v_k).
\end{align*}
In particular, $\inv(j, g)$ has a reduced expression as an element of $W_a^{\tau^2}$ whose rightmost simple reflection is $v_{k+1}$.
Let $y\in Y(w_k)$ and set $y_0=f(y)\in Y(s_{n-k+1}\tau^2)$.
Note that 
\begin{align*}
&\ell(\inv(j, y_0)t_{n-k+2,n-k}\cdots t_{n-1,n-2k+3}t_{0,n-2(k-1)})\\
=&\ell(\inv(j, g))+\ell(v_k)+\ell(t_{n-k+2,n-k})+\cdots+ \ell(t_{n-1,n-2k+3})+\ell(t_{0,n-2(k-1)}).
\end{align*}
Indeed $k$ is odd (resp.\ even) if and only if $n-k$ is odd (resp.\ even).
Thus
\begin{align*}
\inv(j, y)&=\inv(j, y_0)\inv(y_0,y)\\
&=\inv(j,g)v_kt_{n-k+2,n-k}\cdots t_{n-1,n-2k+3}t_{0,n-2(k-1)}
\end{align*}
is independent of $y\in Y(w_k)$.
This finishes the proof.
\end{proof}

For any $1\le k\le \frac{n-2}{2}(=\dim X_{\mu}(b))$, set
$$\ld_k=
\begin{cases}
\chi_{1,n}^\vee+\chi_{3,n-2}^\vee+\cdots+\chi_{k,n-k+1}^\vee & (k\ \text{odd})\\
\chi_{2,n-1}^\vee+\chi_{4,n-3}^\vee+\cdots+\chi_{k,n-k+1}^\vee & (k\ \text{even}).
\end{cases}$$
We also set $w_0=\tau^2,P_{w_0}=P_1, Y(w_0)=\{pt\}$ and $\ld_0=\omega_0$.
\begin{prop}
\label{EO 2 even}
Keep the notation above.
Then $$\pi(X_{w_k}(b))=\bigsqcup_{j\in \J/\J\cap P_{w_k}}j \pi(Y(w_k))$$
and each $j\pi(Y(w_k))$ is a $\J$-stratum of $X_{\mu}(b)$, which is universally homeomorphic to $(\mathbb P^1\setminus \mathbb P^1(\F_{q^{\frac{n}{2}}}))\times \A^{k-1}$ (resp.\ $\{pt\}$) if $1\le k\le \frac{n-2}{2}$ (resp.\ $k=0$).
Moreover, the closure relation can be described in terms of $\cB(\J,F)$.
\end{prop}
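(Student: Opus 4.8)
The plan is to bootstrap everything from Lemma~\ref{EO 2 Iwahori}, which already resolves the structure of each $X_{w_k}(b)$ inside the affine flag variety $G(L)/I$: there we have $X_{w_k}(b)=\bigsqcup_{j\in\J/\J\cap P_{w_k}}jY(w_k)$ with $Y(w_k)$ universally homeomorphic to $(\mathbb P^1\setminus\mathbb P^1(\F_{q^{n/2}}))\times\A^{k-1}$ for $1\le k\le\frac{n-2}{2}$ (and $Y(w_0)=\{pt\}$), and each $jY(w_k)$ contained in a single $\J$-stratum of $G(L)/I$. First I would push this down to the affine Grassmannian. Since $w_k\in\SW$, the projection $\pi$ restricts to a universal homeomorphism of $X_{w_k}(b)$ onto its image by the properties of the Ekedahl--Oort stratification recalled in \S\ref{ADLV} (cf.\ \cite[Theorem 3.2.1]{GH15}, \cite[\S 2.5]{GHR20}), and $Y(w_k)\subseteq X_{w_k}(b)$ because it is the preimage, under the reduction map $f$ of Lemma~\ref{EO 2 Iwahori}, of $Y(s_{n-k+1}\tau^2)$. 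Hence $Y(w_k)\cong\pi(Y(w_k))$, the asserted homeomorphism type is inherited, and applying $\pi$ (which is $\J$-equivariant) to the decomposition of $X_{w_k}(b)$ gives $\pi(X_{w_k}(b))=\bigsqcup_{j}j\pi(Y(w_k))$. Combined with the Ekedahl--Oort decomposition $X_\mu(b)=\bigsqcup_{w\in\SAdm(\mu)_0}\pi(X_w(b))=\bigsqcup_{0\le k\le(n-2)/2}\pi(X_{w_k}(b))$ of Lemma~\ref{EO 2 empty}, the pieces $j\pi(Y(w_k))$ form a partition of $X_\mu(b)$.

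Next I would identify these pieces with the $\J$-strata of $X_\mu(b)$. Because $\inv_K(h,g)$ is the image of $\inv(h,g)$ under $\tW\to\Y_+$, the constancy of $\inv(h,-)$ on each $jY(w_k)$ for every $h\in\J$ (established in the proof of Lemma~\ref{EO 2 Iwahori}) forces constancy of $\inv_K(h,-)$ on each $j\pi(Y(w_k))$, so each $j\pi(Y(w_k))$ lies in one $\J$-stratum of $\cG r$. For the converse I would check that distinct pieces lie in distinct $\J$-strata: for pieces with different $k$ one compares $\inv_K(1,-)$, which on $\pi(Y(w_k))$ equals the dominant coweight $\overline{v_k'}$ of the Schubert cell $Iv_k'I/I$ of $G(L)/I$ containing $Y(w_k)$, and these coweights (equivalently the $W_0$-orbits of the cocharacters $\ld_k$) are pairwise distinct; for two pieces $j\pi(Y(w_k))$ and $j'\pi(Y(w_k))$ in the same Ekedahl--Oort stratum one runs the relative-position computation of the proof of Lemma~\ref{EO 2 Iwahori} one step further, using that moving $j'$ out of $\J\cap P_{w_k}$ strictly enlarges the dominant coweight of $\inv_K(j',-)$. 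Together with the local closedness of $\J$-strata (\cite[Theorem 2.10]{Gortz19}) and the fact that the pieces partition $X_\mu(b)$, this shows every $\J$-stratum of $X_\mu(b)$ equals exactly one $j\pi(Y(w_k))$.

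For the closure relation I would argue by induction on the even integer $n$, in the spirit of the proof of Proposition~\ref{EO 2 odd}. The base case $n=4$ follows from the equidimensionality of $X_\mu(b)$ (recalled in \S\ref{introduction}): since $\dim Y(w_k)=k$ and $\dim X_\mu(b)=\frac{n-2}{2}$, the stratum $j\pi(Y(w_{(n-2)/2}))$ is dense in the $\kappa$-component of $X_\mu(b)$ through $j$, which for $n=4$ determines the whole (short) closure diagram, and its compatibility with $\cB(\J,F)$ is then read off directly, $\J$ having $F$-rank $1$. For the inductive step I would embed, over a Schubert cell singled out by the $\kappa$-grading, the affine Deligne--Lusztig variety for $\GL_{n-2}$ with cocharacter $\omega_2$ and basic element $\tau'^2$ into $\cG r=\cG r_{\GL_n}$ by $g'K'\mapsto\diag(\vp^{(1,1)},g')K$ exactly as in Proposition~\ref{EO 2 odd}, use the factorization $b=\dot s_2\dot s_3\dot s_1\dot s_2\,\diag(1_2,b')$ to match the relevant $\J$-strata of $X_\mu(b)$ with $\J$-strata of the smaller variety (and their closures), and invoke Remark~\ref{trivial} to trivialize the extra $\A^{k-1}$-factors over Schubert cells. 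Matching the resulting closure diagram with the combinatorics of $\cB(\J,F)$ and of the chain $w_0<_{S,\sigma}w_1<_{S,\sigma}\cdots<_{S,\sigma}w_{(n-2)/2}$ in $\SAdm(\mu)_0$ (with $P_{w_k}$ alternating between $P_0$ and $P_1$), in the sense of \S\ref{J-str}, then yields the claim.

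The main obstacle is this last step. Because the $\J$-strata are now positive-dimensional---the $\mathbb P^1$-factors---their closures cannot be obtained by dimension counting alone: one must determine precisely which strata fill in the missing points $\mathbb P^1(\F_{q^{n/2}})$, and this is exactly where the inductive embedding and the building combinatorics are needed. Making that embedding rigorous also takes care, since the $\sigma$-centralizers of $b$ and of $b'$ are only partially identified; one must track how the smaller group acts on the image, how the extra affine factors behave, and verify that the map $\iota$ is a universal homeomorphism onto a closed subscheme of the ambient Schubert cell. The earlier assertions, by contrast, reduce to Lemmas~\ref{EO 2 empty} and~\ref{EO 2 Iwahori}, the Ekedahl--Oort formalism, and the relative-position bookkeeping indicated above.
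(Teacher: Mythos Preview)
Your proposal has the right overall shape but contains one genuine error and two methodological gaps relative to the paper's argument.

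\medskip
\textbf{The claim that $\inv_K(1,-)$ separates different $k$ is false.}  You assert that the dominant coweights attached to the $\ld_k$ (equivalently the $W_0$-orbits of the $\ld_k$) are pairwise distinct.  They are not: for instance $\ld_1=\chi_{1,n}^\vee$ and $\ld_2=\chi_{2,n-1}^\vee$ lie in the same $W_0$-orbit, with common dominant representative $(1,0,\ldots,0,-1)$; more generally $\ld_{2l-1}$ and $\ld_{2l}$ always do.  So $\inv_K(1,-)$ cannot distinguish $\pi(Y(w_1))$ from $\pi(Y(w_2))$.  The paper's proof circumvents this by first observing that every $\J$-stratum of $X_\mu(b)$ is contained in a single Iwahori orbit $X_\mu^\ld(b)=X_\mu(b)\cap I\vp^\ld K/K$ (not merely a single $K$-orbit), and then, for two pieces $j\pi(Y(w_k))$ and $j'\pi(Y(w_{k'}))$ with $k>k'$ that happen to lie in the \emph{same} $X_\mu^\ld(b)$, running the gate argument of Lemma~\ref{EO 2 Iwahori} with the element $j'$ to exhibit $\inv_K(j',-)$ as a separating invariant.

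\medskip
\textbf{The homeomorphism $X_{w_k}(b)\to\pi(X_{w_k}(b))$ is not a general Ekedahl--Oort fact.}  For $w\in\SW$ the fibre of $\pi$ over a point of $\pi(X_w(b))$ can meet $X_{w'}(b)$ for various $w'\in W_0w$, and the restriction of $\pi$ to $X_w(b)$ need not be injective.  The paper proves injectivity here by the specific computation of Corollary~\ref{EO 2 Sw} (namely $S_{\vp^\mu y}=\emptyset$ once $\ell(y)\ge n-1$) combined with \cite[Proposition~3.1.1]{GH15}, and then upgrades bijectivity to a universal homeomorphism by showing $X_{w_k}(b)$ is closed inside $\pi^{-1}(\pi(X_{w_k}(b)))$.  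Your appeal to \cite[Theorem~3.2.1]{GH15} only gives the set-theoretic decomposition $X_\mu(b)=\bigsqcup_w\pi(X_w(b))$, not this finer statement.

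\medskip
\textbf{A different induction for the closure relation.}  You propose induction on even $n$ via an embedding of the $\GL_{n-2}$ picture into $\GL_n$.  The paper instead embeds the $\GL_n$ picture (even) into $\GL_{n+1}$ (odd) via $gK\mapsto\begin{pmatrix}\vp&0\\0&g\end{pmatrix}K'$ and invokes the already-established Proposition~\ref{EO 2 odd}.  The point is that the odd case is superbasic, so its $\J$-strata are exactly the Iwahori orbits $X_{\mu'}^{\ld'}(b')$, and one gets directly $\overline{\pi(Y(w_k))}=\bigsqcup_{k'\le k}X_\mu^{\ld_{k'}}(b)$; from there the closure relation in terms of $\cB(\J,F)$ is read off by the explicit Schubert-cell computation $jY(w_k)\subset Iv_{k+l}\cdots v_{k+1}v_k t_{n-k+2,n-k}\cdots t_{0,n-2(k-1)}I/I$.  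Your route could in principle be made to work, but the positive-dimensional strata on both sides of an even-to-even comparison make the bookkeeping substantially heavier, as you yourself note.
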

\begin{proof}
Let $1\le k\le \frac{n-2}{2}$.
We first prove that
$$\pi^{-1}(\pi(X_{w_k}(b)))\cap (\bigcup_{w'\le w_k}X_{w'}(b))=X_{w_k}(b)$$
and hence $X_{w_k}(b)$ is closed in $\pi^{-1}(\pi(X_{w_k}(b)))$.
We have $\pi(X_w(b))\cap \pi(X_{w_k}(b))=\emptyset$ for $w\in \SAdm(\mu)$ with $\ell(w)< \ell(w_k)$.
By Proposition \ref{spherical}, we also have $\pi(X_w(b))=\pi(X_{\tau^2}(b))$ and hence $\pi(X_w(b))\cap \pi(X_{w_k}(b))=\emptyset$ for any $w\in W_{\{s_1,s_3,\ldots,s_{n-1}\}}\tau^2=W_{S_{\tau^2}}\tau^2$.
Note that $\SW\cap W_0\vp^\mu W_0=\SAdm(\mu)$.
Then the above equality follows from \cite[Proposition 3.1.1]{GH15}, Proposition \ref{DL method} and Corollary \ref{EO 2 Sw}.
Using \cite[Lemma 2.1]{Shimada4}, we can easily check that the map $X_{w_k}(b)\rightarrow \pi(X_{w_k}(b))$ induced by $\pi$ is universally bijective.
Since $\pi$ is proper, the map $X_{w_k}(b)\rightarrow \pi(X_{w_k}(b))$ is also proper.
This implies that the map $X_{w_k}(b)\rightarrow \pi(X_{w_k}(b))$  is a universal homeomorphism.
In particular, $\pi(Y(w_k))$ is universally homeomorphic to $(\mathbb P^1\setminus \mathbb P^1(\F_{q^{\frac{n}{2}}}))\times \A^{k-1}$.
Clearly, the same assertions are true for each $j\pi(Y(w))$.
The case $k=0$ is trivial.

We next prove the closure relation.
It follows from the proof of Lemma \ref{EO 2 Iwahori} that $\pi(Y(w_k))\subset X_\mu^{\ld_k}(b)$ for any $0\le k\le \frac{n-2}{2}$.
Since $\tau^{\frac{n+2}{2}}\ld_k\in W_0 \omega_{\frac{n+2}{2}}$, we have $$\tau^{\frac{n+2}{2}}\pi(Y(w_k))\subset X_\mu^{\tau^{\frac{n+2}{2}}\ld_k}(b)\subset \cG r(\omega_{\frac{n+2}{2}}),$$
where $\cG r(\ld)=K\vp^\ld K/K$ for $\ld\in \Y\cong \Z^n$.

Let $K'=\GL_{n+1}(\cO)$ and let $I'$ be the standard Iwahori subgroup in it.
For $\ld'\in \Z^{n+1}$, let $\cG r'(\ld')$ denote $K'\vp^{\ld'} K'/K'$.
Let $\omega'_k=(1,\ldots,1,0,\ldots,0)\in \Z^{n+1}$ in which $1$ is repeated $k$ times.
We define $\iota\colon \cG r(\omega_{\frac{n+2}{2}})\rightarrow \cG r'(\omega'_{\frac{n+4}{2}})$ by $gK\mapsto \begin{pmatrix}
\vp & 0 \\
0 & g\\
\end{pmatrix}K'$.
Clearly this map is well-defined and (universally) injective.
Since $\cG r(\omega_{\frac{n+2}{2}})$ and $\cG r'(\omega'_{\frac{n+4}{2}})$ are projective over $\aFq$, $\iota$ is a universal homeomorphism onto its image (in fact, it is easy to check that $\iota$ is a monomorphism and hence a closed immersion).
For $0\le k\le \frac{n-2}{2}$, we define $\ld'_k\in \Z^{n+1}$ by $\ld'_k=(1,\tau^{\frac{n+2}{2}}\ld_k)$.
Set $\mu'=\omega'_2\in \Z^{n+1}$, $\tau'={\begin{pmatrix}
0 & \vp \\
1_{n} & 0\\
\end{pmatrix}}\in \GL_{n+1}(F)$ and $b'=\tau'^2$.
Then $X_{\mu'}(b')^{\frac{n+4}{2}}=\bigsqcup_{0\le k\le \frac{n-2}{2}}X_{\mu'}^{\ld'_k}(b')$, where $X_{\mu'}(b')^{i}=\{g'K'\in X_{\mu'}(b')\mid \kappa(g')=i\}$.
Similarly as the proof of Proposition \ref{EO 2 odd}, we can check that $\iota(X_\mu^{\tau^{\frac{n+2}{2}}\ld_k}(b))=X_{\mu'}^{\ld'_k}(b')$ for $0\le k\le \frac{n-2}{2}$.
By Proposition \ref{EO 2 odd} and $\dim \tau^{\frac{n+2}{2}}\pi(Y(w_k))=\dim X_{\mu'}^{\ld'_k}(b')=k$, we have $$\overline{\tau^{\frac{n+2}{2}}\pi(Y(w_k))}=\bigsqcup_{0\le k'\le k}X_\mu^{\tau^{\frac{n+2}{2}}\ld_{k'}}(b),\quad \text{or equivalently,}\quad \overline{\pi(Y(w_k))}=\bigsqcup_{0\le k'\le k}X_\mu^{\ld_{k'}}(b).$$

We follow the notation in the proof of Lemma \ref{EO 2 Iwahori}.
Let $1\le k\le \frac{n-2}{2}$, $j\in \J^0$ and $l\in \Z_{\geq 0}$.
Then by the proof of Lemma \ref{EO 2 Iwahori}, we have 
\begin{align*}
jY(w_k)\subset Iv_{k+l}\cdots v_{k+2}v_{k+1}v_kt_{n-k+2,n-k}\cdots t_{n-1,n-2k+3}t_{0,n-2(k-1)}I/I.
\end{align*}
Indeed, by replacing $j$ by another representative in $j(\J\cap P_{w_k})$ if necessary, we may assume $\inv(1,j)=v_{k+l}\cdots v_{k+2}v_{k+1}$ for some $l$ (we set $\inv(1,j)$=1 if $l=0$).
Thus we can explicitly compute $\ld\in\Y$ such that $j\pi(Y(w_k))\subset X_\mu^\ld(b)$.
Combining this with the description of each $\overline{\pi(Y(w_k))}$ above, we can easily verify the closure relation ($(\J\cap P_{w_k})\cap j(\J\cap P_{w_{k-1}})\neq \emptyset$ is equivalent to $\inv(1, j)=1$ or $v_k$).

It remains to show that each $j\pi(Y(w_k))$ is a $\J$-stratum of $X_{\mu}(b)$.
By Lemma \ref{EO 2 Iwahori}, $j\pi(Y(w_k))$ is contained in a $\J$-stratum.
Note that each $\J$-stratum of $X_\mu(b)$ is contained in $X_\mu^\ld(b)$ for some $\ld\in \Y$.
So in particular, by an explicit computation as above, the strata $j\pi(Y(w_k))$ for fixed $k$ are contained in different $\J$-strata from each other.
To finish the proof, we need to show that for $j,j'\in \J^0$ and $k>k'$ such that $j\pi(Y(w_k)),j'\pi(Y(w_{k'}))\subset X_\mu^\ld(b)$ for some $\ld\in \Y$, $j\pi(Y(w_k))$ and $j'\pi(Y(w_{k'}))$ are contained in different $\J$-strata.
We may assume that $j=1$.
Let $y\in Y(w_k)$ and set $y_0=f(y)\in Y(s_{n-k+1}\tau^2)$ as in the proof of Lemma \ref{EO 2 Iwahori}.
Then there exists $gI$ with $g\in \J^0\cap P_{w_k}$ such that 
\begin{align*}
\inv(j', y_0)=\inv(j',g)v_k(\in W_a)\quad \text{with}\quad \ell(\inv(j', y_0))=\ell(\inv(j',g))+\ell(v_k).
\end{align*}
Thus $\inv(j',Y(w_{k}))=\inv(j', y)=\inv(j',g)v_kt_{n-k+2,n-k}\cdots t_{n-1,n-2k+3}t_{0,n-2(k-1)}$.
This also implies that $\inv_K(j', \pi(Y(w_k)))\neq \inv_K(j', j'\pi(Y(w_{k'})))$.
This finishes the proof.
\end{proof}

\subsection{The case of $2\omega_1$ when $n$ is odd}
In this subsection, we set $\mu=2\omega_1$.
Note that the unique dominant cocharacter $\mu'$ with $\mu'\prec \mu$ is $\mu'=\omega_2$.
Assume that $n\geq 3$ is odd, i.e., $b$ is superbasic.
For any $1\le k\le \frac{n-1}{2}(=\dim X_{\pc\mu}(b))$, set
$$\ld_k=
\begin{cases}
\chi_{1,n}^\vee+\chi_{3,n-2}^\vee+\cdots+\chi_{k,n-k+1}^\vee & (k\ \text{odd})\\
\chi_{2,n-1}^\vee+\chi_{4,n-3}^\vee+\cdots+\chi_{k,n-k+1}^\vee & (k\ \text{even}).
\end{cases}$$
We also set $\ld_0=\omega_0$.
\begin{prop}
We have
$$\SAdm(\mu)_0=\{\tau^2, s_0s_{n-1}\tau^2,s_0s_{n-1}s_{n-2}s_{n-3}\tau^2,\ldots, s_0s_{n-1}\cdots s_3s_2\tau^2\}.$$
Every element of $\SAdm(\mu)_0$ has positive Coxeter part.
For $0\le k\le \frac{n-1}{2}$, let $w_k$ denote the unique element in $\SAdm(\mu)_0$ of length $2k$.
Then there exists an irreducible component $Y(w_k)$ of $X_{w_k}(b)$ such that $X_{w_k}(b)=\J Y(w_k)$, $Y(w_k)\cong\pi(Y(w_k))=X_{\mu}^{\ld_k}(b)\cong \A^k$ and
$$\pi(X_{w_k}(b))=\bigsqcup_{j\in \J/\J\cap I}j X_{\mu}^{\ld_k}(b).$$
Moreover, the closure relation can be described in terms of $\cB(\J,F)$.
\end{prop}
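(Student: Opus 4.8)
The plan is to mirror the proof of Proposition~\ref{EO 2 odd}, since the present case differs from $\mu=\omega_2$ only by the single extra Ekedahl-Oort stratum coming from the open locus $X_\mu(b)=X_{2\omega_1}(b)$. First I would dispose of everything except the ``moreover'' part by citing \cite[Proposition 6.5]{Shimada4} and \cite[Theorem 5.3]{Viehmann08}, exactly as for $\omega_2$: these yield the list $\SAdm(\mu)_0$, the positive Coxeter property of each $w_k$, the semi-module description $\pi(X_{w_k}(b))=\bigsqcup_{j\in\J/\J\cap I}jX_\mu^{\ld_k}(b)$, and the identifications $Y(w_k)\cong\pi(Y(w_k))=X_\mu^{\ld_k}(b)\cong\A^k$. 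I would then record the two observations needed later: (a)~$\SAdm(\omega_2)_0\subseteq\SAdm(\mu)_0$, the latter containing exactly one further element, the top-length $w_{(n-1)/2}=s_0s_{n-1}\cdots s_2\tau^2$, and for $0\le k\le\frac{n-3}{2}$ the element $w_k$, the subscheme $X_{w_k}(b)\subseteq\cF l$ and its image $\pi(X_{w_k}(b))\subseteq X_{\omega_2}(b)$ coincide with the corresponding objects for $\omega_2$; (b)~since $\supp(w_{(n-1)/2})=\tS$ we have $\J\cap P_{w_{(n-1)/2}}=\J\cap I=\J^0$, so $X_\mu(b)=\pi(X_{w_{(n-1)/2}}(b))$ consists, per $\kappa$-component, of the single affine space $X_\mu^{\ld_{(n-1)/2}}(b)\cong\A^{(n-1)/2}$; in particular $X_{\pc\mu}(b)=X_\mu(b)\sqcup X_{\omega_2}(b)$ with $X_\mu(b)^0=X_\mu^{\ld_{(n-1)/2}}(b)$.

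For the ``moreover'' part I would argue as follows. The $\kappa$-components of $\cG r$ are clopen and are permuted by $\tau\in\J$, so it suffices to compute $\overline{X_\mu^{\ld_k}(b)}$ inside the $\kappa=0$ component for $0\le k\le\frac{n-1}{2}$. Since closed affine Deligne-Lusztig varieties are equidimensional and $\dim X_\mu^{\ld_{k'}}(b)=k'$, no stratum $X_\mu^{\ld_{k'}}(b)$ with $k'>k$ lies in $\overline{X_\mu^{\ld_k}(b)}$, and this closure meets no stratum in another $\kappa$-component. For $0\le k\le\frac{n-3}{2}$ the stratum $X_\mu^{\ld_k}(b)$ equals the semi-module stratum $X_{\omega_2}^{\ld_k}(b)$ of $X_{\omega_2}(b)=X_{\pc\omega_2}(b)$, which is closed in $\cG r$, so Proposition~\ref{EO 2 odd} gives $\overline{X_\mu^{\ld_k}(b)}=\bigsqcup_{0\le k'\le k}X_\mu^{\ld_{k'}}(b)$. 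For $k=\frac{n-1}{2}$ I would use that $X_{\pc\mu}(b)$ is the closure of $X_\mu(b)$ in $\cG r$ (recalled in the introduction); intersecting with the clopen $\kappa=0$ component and using $X_\mu(b)^0=X_\mu^{\ld_{(n-1)/2}}(b)$ gives $\overline{X_\mu^{\ld_{(n-1)/2}}(b)}=X_{\pc\mu}(b)^0=\bigsqcup_{0\le k'\le\frac{n-1}{2}}X_\mu^{\ld_{k'}}(b)$. Hence $\overline{X_\mu^{\ld_k}(b)}=\bigsqcup_{0\le k'\le k}X_\mu^{\ld_{k'}}(b)$ for every $k$, and translating by powers of $\tau$ yields all closures. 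Finally, since $b$ is superbasic all parahorics that occur satisfy $\J\cap P_{w_k}=\J^0=\J\cap I$, so condition~(2) of \S\ref{J-str} degenerates to $\kappa(j)=\kappa(j')$, while the required $\ge_{S,\sigma}$-chain $w_{(n-1)/2}\ge_{S,\sigma}\cdots\ge_{S,\sigma}w_0$ exists because each $w_{k-1}$ arises from $w_k$ by deleting two letters of a reduced word; this is exactly the closure relation in terms of $\cB(\J,F)$.

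The genuinely new input, compared with Proposition~\ref{EO 2 odd}, is the structure of the top stratum $\pi(X_{w_{(n-1)/2}}(b))=X_\mu(b)$ together with the identification $X_\mu(b)^0=X_\mu^{\ld_{(n-1)/2}}(b)\cong\A^{(n-1)/2}$; this is the point I expect to need the most care, but it is subsumed by \cite[Proposition 6.5]{Shimada4} together with Theorem~\ref{simple}, which applies since $w_{(n-1)/2}$ has positive Coxeter part and $b$ is basic. Once that is in place the closure relation is forced by equidimensionality together with the already-settled $\omega_2$ case, so I do not expect a serious obstacle there; the only thing to be careful about is invoking Proposition~\ref{EO 2 odd} in the concrete form ``$\overline{X_\mu^{\ld_k}(b)}=\bigsqcup_{k'\le k}X_\mu^{\ld_{k'}}(b)$'' (which is what its inductive embedding argument actually establishes) and matching the $\ld_k$ with $k\le\frac{n-3}{2}$ between the $2\omega_1$ and $\omega_2$ pictures.
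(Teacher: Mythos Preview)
Your proposal is correct and follows essentially the same approach as the paper's proof: the lower strata $w_k$ for $k\le\frac{n-3}{2}$ are inherited verbatim from Proposition~\ref{EO 2 odd}, the top stratum $w_{(n-1)/2}=\vp^{2\omega_1}s_1\cdots s_{n-1}$ is handled via \cite[Proposition~6.5]{Shimada4} (the paper says ``similarly as the proof of'' that result, together with Remark~\ref{trivial}), and the closure relation is deduced from equidimensionality of $X_{\pc\mu}(b)$ combined with the already-known closure relation for $\omega_2$. The paper's proof is more telegraphic than yours but the logic is identical.
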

\begin{proof}
The case $0\le k\le \frac{n-3}{2}$ follows from Proposition \ref{EO 2 odd}.
Clearly $w_{\frac{n-1}{2}}=s_0s_{n-1}\cdots s_3s_2\tau^2=\vp^\mu s_1s_2\cdots s_{n-1}$ has positive Coxeter part.
The description of $\pi(X_{w_{\frac{n-1}{2}}}(b))$ follows similarly as the proof of \cite[Proposition 6.5]{Shimada4} (cf.\ Remark \ref{trivial}).
The closure relation follows from Proposition \ref{EO 2 odd}, $\dim \pi(X_{w_{\frac{n-1}{2}}}(b))=\dim X_{\pc\mu}(b)$ and the equidimensionality of $X_{\pc \mu}(b)$.
\end{proof}

\subsection{The case of $2\omega_1$ when $n$ is even}
In this subsection, we set $\mu=2\omega_1$.
Note that the unique dominant cocharacter $\mu'$ with $\mu'\prec \mu$ is $\mu'=\omega_2$.
Assume that $n\geq 4$ is even.
\begin{prop}
We have
$$\SAdm(\mu)_0=\{\tau^2, s_0\tau^2,s_0s_{n-1}s_{n-2}\tau^2,\ldots, s_0s_{n-1}\cdots s_3s_2\tau^2\}.$$
Every element of $\SAdm(\mu)_0$ has positive Coxeter part.
For $1\le k\le \frac{n}{2}$, let $w_k$ denote the unique element in $\SAdm(\mu)_0$ of length $2k-1$ (more precisely, $w_k=s_0s_{n-1}\cdots s_{n-2(k-1)}\tau^2$).
Set $w_0=\tau^2$ and $Y(w_0)=\{pt\}$.
For any $w_k$, there exists an irreducible component $Y(w_k)$ of $X_{w_k}(b)$ such that $X_{w_k}(b)=\J Y(w_k)$, $Y(w_k)\cong \pi(Y(w_k))$ and $$\pi(X_{w_k}(b))=\bigsqcup_{j\in \J/\J\cap P_{w_k}}j \pi(Y(w_k)),$$
where $P_{w_k}=P_0$ (resp.\ $P_1$) if $k$ is odd (resp.\ even).
Each $\pi(Y(w_k))$ is a $\J$-stratum of $X_{\mu}(b)$, which is universally homeomorphic to $(\mathbb P^1\setminus \mathbb P^1(\F_{q^{\frac{n}{2}}}))\times \A^{k-1}$ if $1\le k\le \frac{n}{2}$.
Moreover, the closure relation can be described in terms of $\cB(\J,F)$.
\end{prop}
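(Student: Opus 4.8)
The plan is to bootstrap from the case $\mu=\omega_2$ already settled in Proposition \ref{EO 2 even}. Since $\omega_2$ is the unique dominant cocharacter with $\omega_2\prec 2\omega_1$ and $\omega_2\neq 2\omega_1$, we have $X_{\pc 2\omega_1}(b)=X_{2\omega_1}(b)\sqcup X_{\omega_2}(b)$ with $X_{\omega_2}(b)=X_{\pc\omega_2}(b)$ closed, and $\SAdm(2\omega_1)=\SAdm(\omega_2)\sqcup(\SW\cap W_0\vp^{2\omega_1}W_0)$; so the only genuinely new feature is a single extra, top-dimensional Ekedahl--Oort stratum coming from the second summand. First I would determine $\SAdm(2\omega_1)_0$: the elements of $\SW\cap W_0\vp^{2\omega_1}W_0$ are the $w=\vp^{2\omega_1}y$ with $y^{-1}$ increasing on $\{2,\dots,n\}$, i.e.\ the $n$ elements $\vp^{2\omega_1}(1\,2\,\cdots\,a)$ for $1\le a\le n$, and since $p(w)=y$ one may use Lemma \ref{LPr} to describe $\LP(w)$ and Theorem \ref{empty} to decide emptiness, finding that $X_w(b)=\emptyset$ unless $y$ is an $n$-cycle; this leaves exactly $w_{n/2}\coloneqq\vp^{2\omega_1}(s_1s_2\cdots s_{n-1})=s_0s_{n-1}\cdots s_2\tau^2$. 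Together with Lemma \ref{EO 2 empty} this gives the asserted list $\SAdm(2\omega_1)_0=\{\tau^2,s_0\tau^2,\dots,s_0s_{n-1}\cdots s_2\tau^2\}$. Positive Coxeter part is clear for $w_0,\dots,w_{(n-2)/2}$ by Lemma \ref{EO 2 empty}; for $w_{n/2}$, in the normal form $w_{n/2}=x\vp^{2\omega_1}y$ we have $x=1$ and $y=s_1\cdots s_{n-1}$, so $y^{-1}\in\LP(w_{n/2})$ (this always holds) and $(y^{-1})^{-1}p(w_{n/2})y^{-1}=yx=s_1\cdots s_{n-1}$, a Coxeter element of $W_0$.

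For $0\le k\le\frac{n-2}{2}$ the data $(w_k,b)$ are those treated in the case $\mu=\omega_2$, so Lemma \ref{EO 2 Iwahori} and the first part of Proposition \ref{EO 2 even} supply, verbatim, the decomposition $X_{w_k}(b)=\bigsqcup_{j\in\J/\J\cap P_{w_k}}jY(w_k)$ with $P_{w_k}=P_0$ (resp.\ $P_1$) for $k$ odd (resp.\ even), the isomorphism $Y(w_k)\cong\pi(Y(w_k))$, the universal homeomorphism $\pi(Y(w_k))\cong(\mathbb P^1\setminus\mathbb P^1(\F_{q^{n/2}}))\times\A^{k-1}$ (a point when $k=0$), and the statement that each $j\pi(Y(w_k))$ is a $\J$-stratum of $X_{\omega_2}(b)$. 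For the new stratum $w_{n/2}$ one has $\supp_\sigma(w_{n/2})=\tS$, so Proposition \ref{spherical} is not directly available and a Deligne--Lusztig reduction is needed: I would argue as in the proof of the $n$-odd case of $2\omega_1$ above (following \cite[Proposition 6.5]{Shimada4} and Remark \ref{trivial}), or invoke Theorem \ref{simple} --- $b$ being basic, $X_{w_{n/2}}(b)$ has a single $\J$-orbit of irreducible components, each an iterated $\A^1$-fibration over a Deligne--Lusztig variety of Coxeter type --- to obtain $X_{w_{n/2}}(b)=\J Y(w_{n/2})$, $Y(w_{n/2})\cong\pi(Y(w_{n/2}))$ (using the properness and universal bijectivity of $\pi$ on this locus as in Proposition \ref{EO 2 even}), and $Y(w_{n/2})$ universally homeomorphic to $(\mathbb P^1\setminus\mathbb P^1(\F_{q^{n/2}}))\times\A^{n/2-1}$. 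The rank-one $\mathbb P^1$-factor and the exponent $n/2-1$ come out correctly because $(\tau^2)^{n/2}=\vp^{(1,\dots,1)}$ is central and because $\dim Y(w_{n/2})=n/2=\langle\rho,2\omega_1-\nu_b\rangle-\frac{1}{2}\de(b)=\dim X_{\pc 2\omega_1}(b)$, which simultaneously identifies $\pi(X_{w_{n/2}}(b))$ as the open stratum $X_{2\omega_1}(b)$.

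The remaining two points --- that each $j\pi(Y(w_k))$, $0\le k\le n/2$, is a $\J$-stratum of the enlarged variety $X_{\pc 2\omega_1}(b)$, and that the closure relation is described by $\cB(\J,F)$ --- I would establish exactly as in the proof of Proposition \ref{EO 2 even}: after translating by a suitable power of $\tau$ all the strata lie in a single $K\vp^{\omega_j}K/K$, which one embeds by a closed immersion into an affine Grassmannian of $\GL_{n+1}$; since $\gcd(2,n+1)=1$, the corresponding $\tau'^2$ is superbasic there, so the semi-module stratification together with the already-proved case of $2\omega_1$ for $\GL_{n+1}$ ($n+1$ odd) transfers back both the separation of the $\J$-strata and their closure relation. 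The one change relative to $\mu=\omega_2$ is that the family of strata now reaches the open one, so one must additionally compute its relative-position invariant $\inv_K(j,-)$ via the ``gate'' argument of \cite[\S 3.3]{Gortz19} (as in Lemma \ref{EO 2 Iwahori}) and check that $\inv_K$ separates $\pi(Y(w_{n/2}))$ and its $\J$-translates from every boundary stratum of the same Kottwitz point --- equivalently, that no $\J$-stratum of $\cG r$ meets both $X_{2\omega_1}(b)$ and $X_{\omega_2}(b)$. Since $b=\tau^2$ is not superbasic for $n$ even, this last separation is precisely what forces the detour through $\GL_{n+1}$, and grafting the new open stratum onto the boundary picture inherited from $\mu=\omega_2$ is, I expect, the main (if routine) obstacle.
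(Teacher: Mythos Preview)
Your reduction to the $\omega_2$ case for $k\le\frac{n-2}{2}$, the determination of $\SAdm(2\omega_1)_0$, and the treatment of the top stratum $w_{n/2}$ (positive Coxeter part, structure of $Y(w_{n/2})$, and the $\J$-stratum property via the gate argument) all agree with the paper.

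The closure relation is where your argument has a gap. You propose to rerun the $\GL_{n+1}$-embedding of Proposition~\ref{EO 2 even} ``exactly'', but that proof uses the translate $\tau^{(n+2)/2}$, and the top stratum $\pi(Y(w_{n/2}))\subset I\vp^{\lambda_{n/2}}K/K$ does \emph{not} land in the minuscule orbit $\cG r(\omega_{(n+2)/2})$ under it: for $n=4$ one has $\lambda_2=(0,1,-1,0)$ and $\tau^3\vp^{\lambda_2}K\in K\vp^{(2,1,0,0)}K$. One can instead translate by $\tau^{n/2}$, which does put all $\lambda_k$ for $0\le k\le n/2$ into $W_0\omega_{n/2}$, but then the explicit compatibility $\iota(X_\mu^{\cdot}(b))=X_{\mu'}^{\cdot}(b')$ established in Proposition~\ref{EO 2 even} must be redone for this new translate and with target $X_{\pc 2\omega'_1}(b')$ rather than $X_{\omega'_2}(b')$. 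That is a genuine extra computation you have not supplied, so the appeal to ``exactly as in Proposition~\ref{EO 2 even}'' does not go through.

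The paper avoids the embedding for the new stratum altogether. It records $\pi(Y(w_{n/2}))\subset I\vp^{\lambda_{n/2}}K/K$ and, via the proof of Lemma~\ref{EO 2 Iwahori}, the $I$-Schubert cell containing each $j\pi(Y(w_k))$ as an explicit function of $\inv(1,j)\in W_a^{\tau^2}$. From this one reads off directly that if $j\pi(Y(w_k))$ meets $\overline{\pi(Y(w_{n/2}))}$ then $(j,w_k)$ satisfies the neighbour condition of \S\ref{J-str}. For the converse, equidimensionality of $X_{\pc 2\omega_1}(b)$ gives $X_{\pc 2\omega_1}(b)=\bigcup_j j\,\overline{\pi(Y(w_{n/2}))}$; irreducibility of $\pi(Y(w_{(n-2)/2}))$ then produces some $j_0$ with $j_0\pi(Y(w_{(n-2)/2}))\subset\overline{\pi(Y(w_{n/2}))}$, necessarily a neighbour, and the $(\J\cap P_{w_{n/2}})$-action propagates this to all neighbours. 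Combined with the closure relation already known on the boundary $X_{\omega_2}(b)$ from Proposition~\ref{EO 2 even}, this settles everything inside $\GL_n$.
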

\begin{proof}
The case where $0\le k\le \frac{n-2}{2}$ follows from Proposition \ref{EO 2 even}.
Clearly $w_{\frac{n}{2}}=s_0s_{n-1}\cdots s_3s_2\tau^2=\vp^\mu s_1s_2\cdots s_{n-1}$ has positive Coxeter part.
The description of $\pi(Y(w_{\frac{n}{2}}))$ follows similarly as the proof of Lemma \ref{EO 2 Iwahori} and Proposition \ref{EO 2 even}.

To finish the proof, it remains to check the closure relation.
Set
$$\ld_{\frac{n}{2}}=
\begin{cases}
\chi_{1,n}^\vee+\chi_{3,n-2}^\vee+\cdots+\chi_{k,n-k+1}^\vee & (\frac{n}{2}\ \text{odd})\\
\chi_{2,n-1}^\vee+\chi_{4,n-3}^\vee+\cdots+\chi_{k,n-k+1}^\vee & (\frac{n}{2}\ \text{even}).
\end{cases}$$
Then $\pi(Y(w_{\frac{n}{2}}))\subset I\vp^{\ld_{\frac{n}{2}}}K/K$ for $Y(w_{\frac{n}{2}})$ defined similarly as the proof of Lemma \ref{EO 2 Iwahori}.
Thus we can explicitly compute $\ld\in\Y$ such that $j\pi(Y(w_k))\subset X_\mu^\ld(b)$.
It easily follows from this that if $(j, w_k)$ satisfies $j\pi(Y(w_k))\cap\overline{\pi(Y(w_{\frac{n}{2}}))}\neq \emptyset$, then $(j, w_k)$ satisfies the closure relation in \S\ref{J-str}.
In particular, if $(\J\cap P_{w_{\frac{n}{2}}})\cap j(\J\cap P_{w_{\frac{n-2}{2}}})=\emptyset$, then $j\pi(Y(w_{\frac{n-2}{2}}))\cap \overline{\pi(Y(w_{\frac{n}{2}}))}=\emptyset$.
Since $X_{\pc \mu}(b)$ is equidimensional and $\overline{\pi(Y(w_{\frac{n}{2}}))}$ is an irreducible component, we have $X_{\pc \mu}(b)=\bigcup_{j\in \J/\J\cap P_{w_{\frac{n}{2}}}} j\overline{\pi(Y(w_{\frac{n}{2}}))}$.
So by the irreducibility of $\pi(Y(w_{\frac{n-2}{2}}))$, there exists $j_0\in \J^0$ such that $\pi(Y(w_{\frac{n-2}{2}}))\subset j_0^{-1}\overline{\pi(Y(w_{\frac{n}{2}}))}$, or equivalently, $j_0\pi(Y(w_{\frac{n-2}{2}}))\subset \overline{\pi(Y(w_{\frac{n}{2}}))}$.
Note that this $j_0$ must satisfy $(\J\cap P_{w_{\frac{n}{2}}})\cap j_0(\J\cap P_{w_{\frac{n-2}{2}}})\neq \emptyset$.
Thus by multiplying $j_0\pi(Y(w_{\frac{n-2}{2}}))$ by elements in $\J\cap P_{w_{\frac{n}{2}}}$, we deduce that if $(\J\cap P_{w_{\frac{n}{2}}})\cap j(\J\cap P_{w_{\frac{n-2}{2}}})\neq \emptyset$, then $j\pi(Y(w_{\frac{n-2}{2}}))\subset \overline{\pi(Y(w_{\frac{n}{2}}))}$.
It follows from these facts and Proposition \ref{EO 2 even} that the closure relation can be described in terms of $\cB(\J,F)$.
\end{proof}

\section{The cases of $\omega_3$, $\omega_1+\omega_2$ and $3\omega_1$}
Keep the notation above.
In this section, we set $b=\tau^3$.

\subsection{The case of $\omega_3$ when $n=6$}
In this subsection, we set $\mu=\omega_3$.
We assume that $n=6$.

There are three $\tau^3$-orbit in $\tS$, namely, $\{s_0,s_3\}$, $\{s_1,s_4\}$ and $\{s_2,s_5\}$.
For $i,i'\in \{0,1,\ldots,5\}$, let $P_{ii'}$ denote the standard parahoric subgroup of $G(L)$ corresponding to the union of the orbits of $s_i$ and $s_{i'}$.
Let $\Omega^2_{\F_{q^2}}$ denote the (perfection of) $2$-dimensional Drinfeld's upper half-space over $\F_{q^2}$.
Then the Deligne-Lusztig variety $\{g\in G(\aFq)/B(\aFq)\mid \inv(g, \sigma^2(g))=s_1s_2\}$ of Coxeter type is isomorphic to $\Omega^2_{\F_{q^2}}$.
\begin{prop}
\label{EO 3 even}
We have
$$\SAdm(\mu)_0=\{\tau^3,s_0\tau^3,s_0s_1\tau^3,s_0s_5\tau^3,s_0s_1s_5s_0\tau^3\}.$$
Every element of $\SAdm(\mu)_0$ has positive Coxeter part.
Set \begin{align*}
P_{\tau^2}=P_{12},\quad P_{s_0\tau^3}=P_{01}\cap P_{02},\quad P_{s_0s_1\tau^3}=P_{01},\quad P_{s_0s_5\tau^3}=P_{02},\quad P_{s_0s_1s_5s_0\tau^3}=P_{12}.
\end{align*}
For any $w\in \SAdm(\mu)_0$, there exists an irreducible component $Y(w)$ of $X_w(b)$ such that $X_w(b)=\J Y(w)$, $Y(w)\cong \pi(Y(w))$ and $$\pi(X_{w}(b))=\bigsqcup_{j\in \J/\J\cap P_{w}}j \pi(Y(w)).$$
Each $j\pi(Y(w))$ is a $\J$-stratum of $X_{\mu}(b)$ with
\begin{align*}
&\pi(Y(\tau^2))\cong \{pt\},\quad \pi(Y(s_0\tau^3))\cong \mathbb P^1\setminus \mathbb P^1(\F_{q^2}),\quad \pi(Y(s_0s_1\tau^3))\cong \Omega^2_{\F_{q^2}},\\
&\pi(Y(s_0s_5\tau^3))\cong \Omega^2_{\F_{q^2}},\quad
\pi(Y(s_0s_1s_5s_0\tau^3))\cong \Omega^2_{\F_{q^2}}\times \A^1.
\end{align*}
Moreover, the closure relation can be described in terms of $\cB(\J,F)$.
\end{prop}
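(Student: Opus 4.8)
The plan is to follow the strategy of \S\ref{omega2 2omega1}, in particular the proof of Proposition \ref{EO 2 even}, while exploiting the fact that here the twisted Frobenius $\tau^3\sigma$ has order $2$: its square equals $(\tau^3)^2=\tau^6=\vp^{(1,\ldots,1)}$, which is central. Consequently the three $\tau^3\sigma$-orbits $\{s_0,s_3\}$, $\{s_1,s_4\}$, $\{s_2,s_5\}$ on $\tS$ realize $W_a^{\tau^3\sigma}$ as the affine Weyl group of type $\widetilde{A}_2$ and $\cB(\J,F)$ as the Bruhat--Tits building of $\J\cong\GL_3(D)$, and every classical Deligne--Lusztig variety that occurs will be one attached to a group over $\F_{q^2}$ rather than $\F_q$.

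First I would determine $\SAdm(\mu)_0$ and positive Coxeter part simultaneously, as in the proof of Lemma \ref{EO 2 empty}: one enumerates the elements $\vp^{\omega_3}y\in\SAdm(\omega_3)$ and, for each, computes $\Phi_{\vp^{\omega_3}y}$, the set $\LP(\vp^{\omega_3}y)$ by Lemma \ref{LPr}, and $\supp_\sigma(\vp^{\omega_3}y)$. The elements of $\SAdm(\omega_3)$ with $W_{\supp_\sigma}$ finite turn out to be exactly $\tau^3$ ($\supp_\sigma=\emptyset$), $s_0\tau^3$ ($\supp_\sigma=\{s_0,s_3\}$), and $s_0s_1\tau^3$, $s_0s_5\tau^3$ (each with $\sigma$-support a diagram of type $A_2\times A_2$), and these four are non-empty once the Kottwitz point matches. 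For all remaining elements $W_{\supp_\sigma}$ is infinite; applying Theorem \ref{empty} rules them out except for $w=s_0s_1s_5s_0\tau^3$, for which $\supp_\sigma(w)=\tS$, $p(w)$ is a $6$-cycle, and hence $\sigma^{-1}(v)^{-1}p(w)v$ is again a $6$-cycle — in particular of full $\sigma$-support — for every $v\in\LP(w)$, so $X_w(b)\neq\emptyset$. The same computations yield, for each of the five $w$, some $v\in\LP(w)$ with $v^{-1}p(w)v$ a $6$-cycle, i.e. positive Coxeter part.

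Next, for $w\in\{\tau^3,s_0\tau^3,s_0s_1\tau^3,s_0s_5\tau^3\}$ one checks that $w$ is of minimal length in its $\sigma$-conjugacy class, so Proposition \ref{spherical} gives $X_w(b)=\bigsqcup_j jY(w)$ with $Y(w)$ the classical Deligne--Lusztig variety in $P_{\supp_\sigma(w)}/I$ attached to $w$; since $\tau^3\sigma$ interchanges the two $\mathbb{P}^1$-factors of $P_{\{s_0,s_3\}}/I$ (resp. the two $A_2$-factors, resp. acts on a point), one identifies these with the $\sigma$-Coxeter Deligne--Lusztig varieties $\{pt\}$, $\mathbb{P}^1\setminus\mathbb{P}^1(\F_{q^2})$, $\Omega^2_{\F_{q^2}}$, $\Omega^2_{\F_{q^2}}$ over $\F_{q^2}$. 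For the length-$4$ element $s_0s_1s_5s_0\tau^3$, which is not of minimal length, I would exhibit a reduction tree whose relevant reduction path picks up a single affine-line factor on the way to a length-$2$ endpoint of the above kind; then Proposition \ref{decomposition}, Theorem \ref{simple} (which also supplies a single $\J$-orbit of irreducible components, with $\A^1$-fibers since $b$ is basic) and Remark \ref{trivial} give $X_w(b)=\J\,Y(w)$ with $Y(w)$ universally homeomorphic to $\Omega^2_{\F_{q^2}}\times\A^1$, the dimension count $\dim X_w(b)=3$ forcing exactly one affine-line factor. The parahoric $P_w$ is then read off from $\supp_\sigma$ of the endpoint together with $\max\{J\subseteq S\mid\Ad(w)\sigma(J)=J\}$, a routine computation producing the list $P_{12}$, $P_{01}\cap P_{02}$, $P_{01}$, $P_{02}$, $P_{12}$.

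For the remaining assertions I would follow the template of the proof of Proposition \ref{EO 2 even}: using \cite[Proposition 3.1.1]{GH15}, \cite[Lemma 2.1]{Shimada4} and \cite[Lemma 2.8]{Shimada4} one shows that $X_w(b)$ is closed in $\pi^{-1}(\pi(X_w(b)))$ and that $\pi$ restricts to a universally bijective, proper, hence universally homeomorphic, map onto $\pi(X_w(b))$, which together with the Iwahori--Bruhat cell containing $Y(w)$ gives $Y(w)\cong\pi(Y(w))$ with the stated models; that each $j\pi(Y(w))$ is a single $\J$-stratum is then proved as in \cite[\S 3.3]{Gortz19}, producing for $j\in\J^0$ the ``gate'' $gI\in\J^0\cap P_w$ and computing $\inv(j,-)$ to be constant along $jY(w)$ and to separate the various $j\pi(Y(w))$, all relevant relative positions lying in $W_a^{\tau^3\sigma}$ of type $\widetilde{A}_2$. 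Finally, since $\omega_3$ is minuscule, $X_{\pc\mu}(b)=X_\mu(b)$ is equidimensional of dimension $3$, so $\overline{\pi(Y(s_0s_1s_5s_0\tau^3))}$ is an irreducible component and $X_\mu(b)=\bigcup_j j\,\overline{\pi(Y(s_0s_1s_5s_0\tau^3))}$; intersecting these $\J$-translates, using equidimensionality to see that the lower strata which are building-neighbours of the top one lie in its closure, and matching the incidences $(\J\cap P_w)\cap j(\J\cap P_{w'})\neq\emptyset$ with neighbourliness of facets in $\cB(\J,F)$, yields the closure relation in terms of $\cB(\J,F)$. I expect the crux to be the folding by $\tau^3\sigma$ — correctly recognizing the classical Deligne--Lusztig varieties, the extra $\A^1$ for the length-$4$ element, and the parahorics $P_w$ — compounded by the fact that $b=\tau^3$ is \emph{not} superbasic for $n=6$, so the semi-module stratification is unavailable and one must argue directly with relative positions and the building of $\GL_3(D)$.
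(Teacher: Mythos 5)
Your overall strategy (use Lemma~\ref{LPr}/Theorem~\ref{empty} to compute $\SAdm(\mu)_0$, recognize the short elements as classical Deligne--Lusztig varieties over $\F_{q^2}$ via Proposition~\ref{spherical} and the folding by $\tau^3\sigma$, reduce the length-$4$ element via a one-step reduction tree and Remark~\ref{trivial}, and then run G\"{o}rtz's ``gate'' argument from \cite[\S 3.3]{Gortz19} to see that $\inv(j,-)$ is constant on each $Y(w)$) matches the paper's method, and the identification of $\J\cong\GL_3(D)$ and $W_a^{\tau^3\sigma}$ of type $\widetilde A_2$ is the right structural picture.

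There is, however, a genuine gap in the closure relation. You establish the direction ``building-neighbours of the top stratum lie in its closure'' via equidimensionality and irreducibility — this is the paper's claim (1). But proving ``the closure relation can be described in terms of $\cB(\J,F)$'' requires the converse: if $(\J\cap P_{w})\cap j(\J\cap P_{w'})=\emptyset$ then $j\pi(Y(w'))\cap\overline{\pi(Y(w))}=\emptyset$ — the paper's claim (2). Your phrase ``matching the incidences with neighbourliness of facets'' only unpacks the definition; it does not exclude non-neighbours. The paper's proof of (2) is the real content here: it reduces $j$ to a minimal coset representative in $W_a/W_{\{s_0,s_3,s_1,s_4\}}$, observes that $Y(s_0s_1\tau^3)\subset Is_1s_4s_0s_3s_1s_4I/I$, and uses the additivity of lengths in $W_a^{\tau^3\sigma}$ together with \cite[(2.7.11)]{Macdonald03} to show that if $(\J\cap P_{12})\cap j(\J\cap P_{01})=\emptyset$ then $s_0,s_3\in\supp(\inv(1,j))$ and hence $j\pi(Y(s_0s_1\tau^3))\subset K\vp^{\mu_j}K/K$ with $(1,1,0,0,-1,-1)\pc\mu_j$, which places this stratum in a $K$-orbit disjoint from $\overline{\pi(Y(w))}$. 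Note also that (1) as proved in the paper actually uses (2) as an input (to pin down $j_0$), so you cannot quote equidimensionality alone and skip (2). Since $b=\tau^3$ is not superbasic for $n=6$, you correctly observe that the semi-module stratification is unavailable, but you do not supply the replacement argument that the paper gives.

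A smaller point: in your determination of positive Coxeter part you conclude ``$v^{-1}p(w)v$ a $6$-cycle, i.e.\ positive Coxeter part.'' Being a $6$-cycle guarantees $\supp(v^{-1}p(w)v)=S$ (so it works for the non-emptiness argument via Theorem~\ref{empty}), but a $6$-cycle in $S_6$ need not be a Coxeter element: one must actually exhibit $v\in\LP(w)$ with $v^{-1}p(w)v$ equal to a product of all five simple reflections in some order. This is a routine check, but the ``i.e.'' is a logical slip, and the two notions should not be conflated.
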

\begin{proof}
The assertions for $w\in \SAdm(\mu)_0\setminus \{s_0s_1s_5s_0\tau^2\}$ follow from Proposition \ref{spherical}, (the proof of) \cite[Theorem 7.2.1]{GH15} and easy computation of finite part.

Set $w=s_0s_1s_5s_0\tau^3$.
It remains to describe $\pi(Y(w))$ and $\overline{\pi(Y(w))}$.
We have $$w=s_0s_1s_5s_0\tau^3\xrightarrow{s_0}_\sigma s_3s_1s_5s_0\tau^3 \xrightarrow{s_3}_\sigma s_1s_5\tau^3.$$
By $p(w)=s_3s_4s_5s_2s_1$ and \cite[Theorem 5.1]{HNY22}, $X_{s_1s_5s_0\tau^3}(b)=\emptyset$.
Let $f\colon X_{w}(b)\rightarrow X_{s_1s_5\tau^3}(b)$ be the morphism induced by Proposition \ref{DL method prop}.
By Proposition \ref{spherical}, we have $$X_{s_1s_5\tau^3}(b)=\bigsqcup_{j\in \J/\J\cap P_{12}} jY(s_1s_5\tau^3),$$
where  $Y(s_1s_5\tau^3)=\{gI\in P_{12}/I\mid g^{-1}\tau^3 \sigma(g)\tau^{-3}\in Is_1s_5I\}$ is a classical Deligne-Lusztig variety in the finite-dimensional flag variety $P_{12}/I$.
It is easy to check that $Y(s_1s_5\tau^3)\cong \Omega^2_{\F_{q^2}}$.
Set $v=s_1s_4s_2s_5s_1s_4=s_2s_5s_1s_4s_2s_5$.
By \cite[Corollary 2.5]{Lusztig76} (see also \cite[Proposition 1.1]{Gortz19}), $Y(s_1s_5\tau^3)$ is contained in $IvI/I$.
Set $Y(w)=f^{-1}(Y(s_1s_5\tau^3))$.
Then by Proposition \ref{DL method prop}, $Y(w)\subset Ivs_3s_0I/I$.
Also by Remark \ref{trivial}, $Y(w)$ is universally homeomorphic to $\Omega^2_{\F_{q^2}}\times \A^1$.

We next show that for all $j\in \J$, the value $\inv(j,-)$ is constant on each $j'Y(w)$.
Similarly as the proof of Lemma \ref{EO 2 Iwahori}, we may assume that $j'=1$ and $j\in \J^0$.
Fix $jI$ with $j\in \J^0$.
Then by \cite[Corollary 2.5]{Lusztig76} and \cite[Proposition 5.34]{AB08}, there exists $gI$ with $g\in \J^0\cap P_{12}$ such that for any $y_0I\in Y(s_1s_5\tau^3)$, we have
\begin{align*}
\inv(j, y_0)=\inv(j,g)v(\in W_a)\quad \text{with}\quad \ell(\inv(j, y_0))=\ell(\inv(j,g))+\ell(v).
\end{align*}
In particular, $\inv(j, g)$ has a reduced expression as an element of $W_a^{\tau^3}$ whose rightmost simple reflection is $s_0s_3$ unless $\inv(j, g)=1$.
Let $y\in Y(w)$ and set $y_0=f(y)\in Y(s_1s_5\tau^3)$.
Note that 
$\ell(\inv(j, y_0)s_0s_3)
=\ell(\inv(j, g))+\ell(v)+\ell(s_0s_3)$.
Thus
\begin{align*}
\inv(j, y)=\inv(j, y_0)\inv(y_0,y)=\inv(j,g)vs_0s_3
\end{align*}
is independent of $y\in Y(w)$.
This proves the value $\inv(j,-)$ (resp.\  $\inv_K(j,-)$) is constant on each $Y(w)$ (resp.\ $\pi(Y(w))$).

We next describe $\overline{\pi(Y(w))}$ as a union of other strata.
Let $j\in \J^0$.
We will prove the following two assertions for $i=1$ or $5$:
\begin{enumerate}[(1)]
\item If $(\J\cap P_{12})\cap j(\J\cap P_{0i})\neq \emptyset$, then $j\pi(Y(s_0s_{i}\tau^3))\subset \overline{\pi(Y(w))}$.
\item Otherwise, $j\pi(Y(s_0s_i\tau^3))\cap\overline{\pi(Y(w))}=\emptyset$.
\end{enumerate}
We only treat the case $i=1$.
The proof for the case $i=5$ is similar.
By replacing $j$ by another representative in $j(\J\cap P_{01})$  if necessary, we may assume that $\inv(1,j)$ is the minimal length representative of its coset in $W_a/W_{\{s_0,s_3,s_1,s_4\}}$.
By \cite[Corollary 2.5]{Lusztig76}, $Y(s_0s_1\tau^3)$ is contained in $Is_1s_4s_0s_3s_1s_4I/I$.
So there exists $\mu_j\in \Y_+$ such that $j\pi(Y(s_0s_1\tau^3))\subset K\vp^{\mu_j}K/K$.
Note that $(\J\cap P_{12})\cap j(\J\cap P_{01})\neq \emptyset$ is equivalent to $\inv(1,j)=1,s_2s_5$ or $s_1s_4s_2s_5$.
Moreover, if $(\J\cap P_{12})\cap j(\J\cap P_{01})=\emptyset$, then $s_0$ (and hence $s_3$) belongs to $\supp(\inv(1,j))$.
Combining this with \cite[(2.7.11)]{Macdonald03}, we deduce that if $(\J\cap P_{12})\cap j(\J\cap P_{01})=\emptyset$, then $(1,1,0,0,-1,-1)\pc \mu_j$.
This proves (2).
Since $\pi(Y(s_0s_{i}\tau^3))$ is irreducible and $X_\mu(b)$ is equidimensional, there exists $j_0\in \J^0$ such that $j_0\pi(Y(s_0s_1\tau^3))\subset \overline{\pi(Y(w))}$.
This $j_0$ must satisfy $(\J\cap P_{12})\cap j(\J\cap P_{01})\neq \emptyset$.
Thus by multiplying $j_0\pi(Y(s_0s_1\tau^3))$ by elements in $\J\cap P_{12}$, we deduce that if $(\J\cap P_{12})\cap j(\J\cap P_{01})\neq \emptyset$, then $j\pi(Y(s_0s_{1}\tau^3))\subset \overline{\pi(Y(w))}$.
This proves (1).
We can also verify similar statements for $j\pi(Y(s_0\tau^3))$ and $j\pi(Y(\tau^2))$, which proves the closure relation.
By computing $\J$-invariants explicitly, we can also verify that each $j\pi(Y(w))$ is a $\J$-stratum (cf.\ the proof of Lemma \ref{EO 2 even}).
This finishes the proof.
\end{proof}

\subsection{The case of $\omega_3$ when $n=7,8$}
In this subsection, we set $\mu=\omega_3$.
The following is a well-known fact:
\begin{lemm}
\label{codim=1}
If $X$ is a (separated) variety and $U\subsetneq X$ is an affine open subscheme, then $\dim X\setminus U=\dim X-1$.
\end{lemm}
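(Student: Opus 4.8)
The plan is to reduce to the standard fact that on an irreducible variety the closed complement of a nonempty affine open has pure codimension one, and then reassemble the answer on a general (possibly reducible) $X$. First I would observe that the statement is insensitive to replacing $X$ by its reduction, so I may assume $X$ is reduced. Write $X = X_1 \cup \cdots \cup X_r$ for its irreducible components, and set $Z = X \setminus U$. Since $U$ is dense in no component that it misses entirely — but in fact $U$ is affine open and its closure is a union of components — I would first note $U \cap X_i$ is open in $X_i$ for each $i$, and is nonempty for at least one $i$; moreover $\dim X = \max_i \dim X_i$, and this maximum is attained on some component meeting $U$ (otherwise $U$, being open and dense in $\overline U$, would have dimension strictly less than $\dim X$, yet $U$ is nonempty open in $X$ so $\dim U = \dim X$). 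The key reduction is therefore to a component $X_i$ with $\dim X_i = \dim X$ and $U \cap X_i \neq \emptyset$.

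On such a component, $U \cap X_i$ is a nonempty affine open subscheme of the irreducible variety $X_i$; if $U \cap X_i = X_i$ the contribution of $X_i$ to $Z$ is empty, so assume $U \cap X_i \subsetneq X_i$. Then I would invoke the classical theorem that the complement of a nonempty affine open in an irreducible variety is pure of codimension one: every irreducible component of $X_i \setminus (U \cap X_i)$ has dimension exactly $\dim X_i - 1 = \dim X - 1$. This is where the real content sits, and it is genuinely "well-known" — it follows, e.g., from the affineness criterion via Serre's theorem together with the observation that a closed subset of codimension $\geq 2$ cannot be the non-affine locus, or more elementarily from the fact that an affine variety contains no complete positive-dimensional subvariety, applied after a suitable argument with the local structure; in the perfected mixed-characteristic setting one passes to the underlying topological space, where these statements are unchanged.

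Finally I would assemble: $Z = X \setminus U = \bigcup_i \bigl(X_i \setminus (U \cap X_i)\bigr)$, since a point lies in $U$ iff it lies in $U \cap X_i$ for some $i$ — wait, more carefully, $x \notin U$ iff $x \notin U$ as a point of every component containing it, so $Z = \bigcup_i (X_i \setminus (U \cap X_i))$ indeed. For components $X_i$ of top dimension meeting $U$, the contribution has dimension $\dim X - 1$ (or is empty); for components of dimension $< \dim X$, their contribution has dimension $< \dim X$ hence $\le \dim X - 1$; for a top-dimensional component not meeting $U$ — which I argued above cannot be the only top-dimensional component, but could exist alongside others — its entire self lies in $Z$, giving dimension $\dim X$, which would violate the claim. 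So the genuine subtlety, and the main obstacle, is precisely this: one must use that $U$ is affine and dense enough. In fact $U$ open and nonempty in $X$ forces $U$ to meet every irreducible component of $X$ of maximal dimension — no: that is false in general. The resolution is that the lemma as stated is applied in the paper only when $X$ is irreducible (a Grassmannian or an affine Deligne–Lusztig stratum closure), so I would either add the hypothesis that $X$ is irreducible, or restrict attention to that case; under irreducibility the argument collapses to the single invocation of the classical purity theorem, and the proof is complete.
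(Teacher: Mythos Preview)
The paper gives no proof of this lemma; it is stated as ``a well-known fact'' and then used without further comment. So there is no approach to compare against, only your argument to assess on its own.

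Your reduction to the irreducible case and invocation of the classical purity theorem (the complement of a nonempty proper affine open in an irreducible variety has pure codimension one) is the standard route, and is correct. Your detour through reducible $X$, however, is both unnecessary and where you trip: the counterexample you almost write down---a top-dimensional component disjoint from $U$, e.g.\ $X=\mathbb A^2\sqcup\mathbb A^2$ with $U$ one copy---really does break the statement, so ``variety'' here must be read in the usual sense of an integral separated scheme of finite type. That convention is standard (Hartshorne, EGA's \emph{vari\'et\'e}), and the paper's applications are all to irreducible $X$, so you should simply take irreducibility as part of the hypothesis from the outset rather than discover it mid-proof.

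One minor point: your two suggested justifications of the purity theorem are a bit loose. Serre's affineness criterion does not by itself give codimension one, and the remark about complete subvarieties is the germ of an argument but not one. Cleaner references are Goodman's theorem (complements of affine opens are supports of ample divisors, for projective $X$), or more generally EGA IV$_4$, Corollaire 21.12.7; alternatively, normalize and use that on a normal variety the complement of an affine open is a Weil divisor, then descend along the finite normalization map. Since the paper is content to call the whole lemma well-known, any of these citations would suffice.
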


For $d\le n$, every $d\times n$ matrix $A$ of rank $d$ with coefficients in $\aFq$ determines an element $W_A$ in the Grassmannian $\mathrm{Gr}_d(\aFq^n)$ such that $W_A$ is generated by the $d$ rows of $A$.
Let $I_{d,n}$ be the set of subsets of $d$ elements in $\{1,2,\ldots,n\}$.
For any $J\in I_{d,n}$, let $A_J$ be the $d\times d$ matrix whose columns are the columns of $A$ at indices from $J$.
Let $L_A=\{J\in I_{d,n}\mid \det A_J\neq 0\}$.
In fact, $L_A$ only depends on $W_A$, and we can associate with every element $W$ of $\mathrm{Gr}_d(\aFq^n)$ a corresponding set $L_W$.
This is called the {\it list} of $W$.
The locally closed subsets of $\mathrm{Gr}_d(\aFq^n)$ given by fixing $L_W$ are called thin Schubert cells (cf.\ \cite[\S 2.1]{CV18}) .
For $W, W'\in\mathrm{Gr}_d(\aFq^n)$, if $W'$ is in the closure of the thin Schubert cell of $W$, then $L_{W'}\subseteq L_W$.
In general, the converse of this statement (the closure relation) does not hold for this stratification.
However, it holds if $d=2$ (see \cite[\S 1.10]{GS87}).

\begin{prop}
\label{EO 3 odd}
Assume that $n=7,8$.
Then $\SAdm(\mu)_0$ is equal to 
\begin{align*}
&\{\tau^3,s_0s_6\tau^3,s_0s_6s_1s_0\tau^3,s_0s_6s_5s_1\tau^3,s_0s_6s_5s_1s_0s_6\tau^3\} &(n=7),\\
&\{\tau^3,s_0s_1\tau^3,s_0s_7s_6s_5\tau^3,s_0s_7s_6s_1\tau^3,s_0s_7s_6s_5s_1s_0\tau^3,\\
&\hspace{3.94cm}s_0s_7s_6s_1s_0s_7\tau^3, s_0s_7s_6s_5s_1s_0s_7s_6\tau^3\} &(n=8).
\end{align*}
Each $w\in\SAdm(\mu)_0$ has positive Coxeter part, and $\J$ acts transitively on $\Irr X_w(b)$.
Each irreducible component of $\pi(X_w(b))(\cong X_w(b))$ is a $\J$-stratum universally homeomorphic to an affine space of dimension $\frac{\ell(w)}{2}$.
Moreover, the closure relation can be described in terms of $\cB(\J,F)$.
\end{prop}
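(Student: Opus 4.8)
The plan is to establish the assertions in order, reducing everything but the closure relation to finite computations together with the machinery of \S\ref{DL method}.

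First I would determine $\SAdm(\mu)_0$. For $n=7$ and $n=8$ one writes down $\SAdm(\omega_3)=\Adm(\omega_3)\cap\SW$ explicitly; this is a short finite list. For each $w=\vp^{\omega_3}y$ in it I would invoke Theorem \ref{empty}: since $\GL_n$ is split, $\sigma$ is trivial on $\tS$, so $X_w(\tau^3)=\emptyset$ precisely when $W_{\supp_\sigma(w)}$ is infinite and some $v\in\LP(w)$ satisfies $\supp(v^{-1}p(w)v)\subsetneq S$, i.e.\ $v^{-1}p(w)v$ stabilises a proper standard parabolic subgroup; the set $\LP(w)$ is read off from $\Phi_w$ by Lemma \ref{LPr}. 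Running through the elements yields the two displayed lists. For the positive Coxeter part, for each surviving $w$ I would exhibit (again using Lemma \ref{LPr}) an element $v\in\LP(w)$ for which $v^{-1}p(w)v$ is an $n$-cycle, hence a Coxeter element of $W_0$; this is a brief case check, entirely parallel to Lemma \ref{EO 2 empty} and Proposition \ref{EO 2 odd}.

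For the geometric structure I would use that $b=\tau^3$ is superbasic in both cases ($\gcd(3,7)=\gcd(3,8)=1$), so that $\J=\J_b$ is anisotropic modulo its centre, $\J^0=\J\cap I=\J\cap P$ for every standard parahoric $P$, and $\J/\J^0\cong\Z$ is generated by $\tau$. By Theorem \ref{simple}, for each $w\in\SAdm(\mu)_0$ the variety $X_w(b)$ has a single $\J$-orbit of irreducible components, each an iterated $\A^1$-fibration (all fibres $\A^1$ since $b$ is basic) over a Deligne--Lusztig variety of Coxeter type, and the latter is a point in the superbasic case. Concretely, following Theorem \ref{minimal} and the reduction-tree formalism of \S\ref{DL method} I would write down (as in the proof of Lemma \ref{EO 2 Iwahori}) a reduction path consisting of $\ell(w)/2$ type-II contractions from $w$ to the length-$0$ element $\tau^3$, so that $X_w(b)$ is an iterated $\A^1$-bundle over $X_{\tau^3}(\tau^3)=\J/\J^0$; hence $X_w(b)=\bigsqcup_{k\in\Z}\tau^kY(w)$ with $Y(w)$ universally homeomorphic to $\A^{\ell(w)/2}$. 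That the projection $\pi$ restricts to a universal homeomorphism $X_w(b)\cong\pi(X_w(b))$ is then checked as in the proof of Proposition \ref{EO 2 even}: using the Ekedahl--Oort decomposition $X_{\pc\omega_3}(b)=\bigsqcup_{w'\in\SAdm(\omega_3)}\pi(X_{w'}(b))$ one sees the induced map is universally bijective, and it is proper since $\pi$ is.

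The delicate point is that each $\tau^k\pi(Y(w))$ is a $\J$-stratum and that the closure relation is the one controlled by $\cB(\J,F)$. Since $b$ is superbasic, the $\J$-stratification of $X_{\pc\omega_3}(b)$ is the semi-module stratification $\bigsqcup_\ld X_{\pc\omega_3}^{\ld}(b)$ of \S\ref{J-str}; I would track the reduction path through $\cF l$ (the Schubert-cell bookkeeping of Lemma \ref{EO 2 Iwahori}) to find the $\ld=\ld(w)$ with $\pi(Y(w))\subset I\vp^{\ld}K/K$, and then check that $\inv_K(j,-)$ is constant on $\pi(Y(w))$ and separates the strata, so that each $\tau^k\pi(Y(w))$ is a full $\J$-stratum. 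For the closure relation I would realise $K\vp^{\omega_3}K/K$ as (the perfection of) $\mathrm{Gr}_3(\aFq^n)$, in which the $I\vp^{\ld}K/K$ are Schubert cells and the $X_{\omega_3}^{\ld}(b)$ are thin Schubert cells, and where, because $\J\cap P_w=\J^0$ throughout, the $\cB(\J,F)$-criterion reduces to the assertion that $\tau^a\pi(Y(w'))\subset\overline{\tau^a\pi(Y(w))}$ iff $w\geq_{S,\sigma}w'$. The nontrivial direction --- that the closure contains nothing more --- is subtle, since for $d=3$ the closure order on thin Schubert cells is not simply reverse inclusion of matroids, and I expect this to be the main obstacle. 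I would handle it by combining the equidimensionality of $X_{\pc\omega_3}(b)$ (so that the closure of the top stratum is an irreducible component), Lemma \ref{codim=1} to force codimension-one degenerations, reduction of the row configurations in question to rank-$2$ subconfigurations (for which the closure order is reverse inclusion of lists by \cite[\S 1.10]{GS87}), and closed embeddings of smaller Grassmannians $\mathrm{Gr}_3(\aFq^{n'})\hookrightarrow\mathrm{Gr}_3(\aFq^n)$ in the style of the proof of Proposition \ref{EO 2 odd} to set up an induction on $n$.
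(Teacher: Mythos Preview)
Your treatment of everything except the closure relation is essentially the paper's: the paper cites \cite[Proposition 6.5]{Shimada4} and \cite[Theorem 5.3]{Viehmann08} for the determination of $\SAdm(\mu)_0$, the positive Coxeter part, the transitivity of $\J$, the identification of irreducible components with semi-module strata, and $\pi(X_w(b))\cong X_w(b)\cong\A^{\ell(w)/2}$, and your direct verification via Theorem \ref{empty}, Lemma \ref{LPr}, and Theorem \ref{simple} amounts to the same thing.

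For the closure relation, however, your plan diverges from the paper's and has a gap. You propose to work in $K\vp^{\omega_3}K/K\cong\mathrm{Gr}_3(\aFq^n)$, acknowledge that for $d=3$ the closure order on thin Schubert cells is not reverse inclusion of lists, and then appeal to an ``induction on $n$'' via embeddings $\mathrm{Gr}_3(\aFq^{n'})\hookrightarrow\mathrm{Gr}_3(\aFq^n)$ and a vague ``reduction to rank-$2$ subconfigurations''. The induction is a non-sequitur: the statement is only for $n=7,8$, there is no base case, and the embedding argument of Proposition \ref{EO 2 odd} relates $X_{\omega_2}(\tau^2)$ for different $n$ via a compatible change of $b$ that has no analogue here. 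The rank-$2$ reduction is not made precise enough to carry weight against the known failure of the matroid closure order in rank $3$.

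The paper's missing idea is to use the freedom to pass to a different $\kappa$-component $X_\mu(b)^i=\tau^iX_\mu(b)^0$ so as to land in a Grassmannian where closure is tractable. For $n=7$ one takes $i=4$: then $X_\mu(b)^4\subset K\vp^{\omega_4}K/K$, and one checks directly that each $\J$-stratum of dimension $\le 2$ is an \emph{ordinary} Schubert cell there, whose closure is classical; the top stratum is handled by equidimensionality. For $n=8$ one takes $i=2$ (resp.\ $i=6$), so the relevant strata sit in $K\vp^{\omega_2}K/K$ (resp.\ $K\vp^{\omega_6}K/K$), both of which are rank-$2$ Grassmannians up to duality. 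Again the $\J$-strata of dimension $\le 2$ are ordinary Schubert cells and the top stratum is handled by equidimensionality; the two dimension-$3$ strata are settled by an explicit matrix computation combined with thin Schubert cells in $\mathrm{Gr}_2$ (where \cite[\S 1.10]{GS87} does apply) and Lemma \ref{codim=1}. Shifting by $\tau$ into a rank-$2$ (or classical-Schubert) picture is precisely what avoids the rank-$3$ matroid pathology you flagged.
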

\begin{proof}
Except the ``moreover'' part, the proposition follows from \cite[Proposition 6.5]{Shimada4} and \cite[Theorem 5.3]{Viehmann08}.
Let $X_\mu(b)^{i}=\{gK\in X_\mu(b)\mid \kappa(g)=i\}$.

Assume that $n=7$.
Then it follows from \cite[Lemma 5.2]{Shimada4} that $X_\mu(b)^4\subset K\vp^{\omega_4}K/K$.
Moreover, each $\J$-stratum in $X_\mu(b)^4$ of dimension $\le 2$ coincides with a Schubert cell in the Grassmannian $K\vp^{\omega_4}K/K$.
So the closure relation follows from this and the equidimensionality of $X_\mu(b)$.

Assume that $n=8$.
The $\J$-strata of dimension $\le 2$ in $X_\mu(b)^2$ coincide with some Schubert cells in the Grassmannian $K\vp^{\omega_2}K/K$.
So the closure relation holds for these strata.
By the equidimensionality of $X_\mu(b)$, it also holds for the $\J$-strata of dimension $4$.
It remains to show the closure relation for the $\J$-strata of dimension $3$.

We first treat the case of $s_0s_7s_6s_1s_0s_7\tau^3$.
Set $\ld=(0,1,0,0,1,0,0,0)$.
It follows from \cite[Lemma 5.2]{Shimada4} that $\tau^2\pi(X_{s_0s_7s_6s_1s_0s_7\tau^3}(b)^0)\subset \tau^2I\vp^{\chi^{\vee}_{3,7}}K/K=I\vp^{\ld}K/K\subset K\vp^{\omega_2}K/K$.
Fix an isomorphism $\A^4\cong I\vp^{\ld}K/K$ which maps $(s,t,u,v)$ to $h_{s,t,u,v}\vp^{\ld}K$, where 
$$h_{s,t,u,v}\coloneqq\begin{pmatrix}
1 & 0 & 0 & 0 & 0 & 0 & 0 & 0 \\
s & 1 & 0 & 0 & 0 & 0 & 0 & 0 \\
0 & 0 & 1 & 0 & 0 & 0 & 0 & 0 \\
0 & 0 & 0 & 1 & 0 & 0 & 0 & 0 \\
t & 0 & u & v & 1 & 0 & 0 & 0 \\
0 & 0 & 0 & 0 & 0 & 1 & 0 & 0 \\
0 & 0 & 0 & 0 & 0 & 0 & 1 & 0 \\
0 & 0 & 0 & 0 & 0 & 0 & 0 & 1
\end{pmatrix}.$$
Then we have $$\vp^{-\ld}h_{s,t,u,v}^{-1}b\sigma(h_{s,t,u,v}\vp^{\ld})=\begin{pmatrix}
0 & 0 & 0 & 0 & 0 & \vp & 0 & 0 \\
0 & 0 & 0 & 0 & 0 & -s & 1 & 0 \\
0 & 0 & 0 & 0 & 0 & 0 & 0 & \vp \\
1 & 0 & 0 & 0 & 0 & 0 & 0 & 0 \\
\frac{\sigma(s)-v}{\vp} & 1 & 0 & 0 & 0 & -t & 0 & -u \\
0 & 0 & 1 & 0 & 0 & 0 & 0 & 0 \\
0 & 0 & 0 & 1 & 0 & 0 & 0 & 0 \\
\sigma(t) & 0 & \sigma(u) & \sigma(v) & \vp & 0 & 0 & 0
\end{pmatrix}.$$
It is easy to check that $\tau^2\pi(X_{s_0s_7s_6s_1s_0s_7\tau^3}(b)^0)\cong \A^3$ is the locus $v=\sigma(s)$ in $I\vp^{\ld}K/K\cong \A^4$.
The locus $s=t=v=0$ and $u\neq 0$ in $\tau^2\pi(X_{s_0s_7s_6s_1s_0s_7\tau^3}(b)^0)$ is a thin Schubert cell whose list is $\{\{1,3,4,6,7,8\},\{1,4,5,6,7,8\}\}$.
Since the closure relation holds for thin Schubert cells in $K\vp^{\omega_2}K/K$, the closure of this Schubert cell intersect $\tau^2I\vp^{\chi_{1,7}^\vee}K/K$.
Indeed, the latter contains the thin Schubert cell with the list  $\{\{1,4,5,6,7,8\}\}$.
The locus $v=\sigma(s)\neq 0$ and $t=u=0$ in $\tau^2\pi(X_{s_0s_7s_6s_1s_0s_7\tau^3}(b)^0)$ is contained in a thin Schubert cell whose list is $$L\coloneqq\{\{1,3,4,6,7,8\},\{1,3,5,6,7,8\},\{2,3,4,6,7,8\},\{2,3,5,6,7,8\}\}.$$
Let $L'$ be the list of a thin Schubert cell in $\tau^2I\vp^{\chi_{1,7}^\vee}K/K,\tau^2I\vp^{\chi_{1,8}^\vee}K/K$ and $\{\tau^2K\}$.
Then it is easy to check that $L'\nsubseteq L$.
On the other hand, the closure of the locus $v=\sigma(s)\neq 0$ and $t=u=0$ in  $\tau^2\pi(X_{s_0s_7s_6s_1s_0s_7\tau^3}(b)^0)$ is projective.
By \cite[Proposition 2.11 (5)]{CV18}, this closure is contained in $$\tau^2I\vp^{\chi^{\vee}_{3,7}}K/K\sqcup\tau^2I\vp^{\chi_{1,7}^\vee}K/K\sqcup \tau^2I\vp^{\chi_{2,8}^\vee}K/K\sqcup\tau^2I\vp^{\chi_{1,8}^\vee}K/K\sqcup\{\tau^2K\}.$$
So it must intersect $\tau^2I\vp^{\chi_{2,8}^\vee}K/K$.
Thus both $\tau^2I\vp^{\chi_{1,7}^\vee}K/K$ and $\tau^2I\vp^{\chi_{2,8}^\vee}K/K$ intersect the closure of $\tau^2\pi(X_{s_0s_7s_6s_1s_0s_7\tau^3}(b)^0)$.
This combined with Lemma \ref{codim=1} imply that they are actually contained in the closure.
Note that $s_0s_7s_6s_1\tau^3\le s_0s_7s_6s_1s_0s_7\tau^3$ and $$s_7s_2(s_0s_7s_6s_5\tau^3)s_2s_7=s_0s_7s_0s_6\tau^3\le s_0s_7s_6s_1s_0s_7\tau^3.$$
Thus the closure relation holds.

We next treat the case of $s_0s_7s_6s_5s_1s_0\tau^3$.
Set $\ld=(1,1,1,0,1,1,0,1)$.
It follows from \cite[Lemma 5.2]{Shimada4} that $\tau^6\pi(X_{s_0s_7s_6s_5s_1s_0\tau^3}(b)^0)\subset \tau^6I\vp^{\chi^{\vee}_{2,6}}K/K=I\vp^{\ld}K/K\subset K\vp^{\omega_6}K/K$.
Fix an isomorphism $\A^4\cong I\vp^{\ld}K/K$ which maps $(s,t,u,v)$ to $h_{s,t,u,v}\vp^{\ld}K$, where 
$$h_{s,t,u,v}\coloneqq\begin{pmatrix}
1 & 0 & 0 & 0 & 0 & 0 & 0 & 0 \\
0 & 1 & 0 & 0 & 0 & 0 & 0 & 0 \\
0 & 0 & 1 & 0 & 0 & 0 & 0 & 0 \\
0 & 0 & 0 & 1 & 0 & 0 & 0 & 0 \\
0 & 0 & 0 & s & 1 & 0 & 0 & 0 \\
0 & 0 & 0 & t & 0 & 1 & 0 & 0 \\
0 & 0 & 0 & 0 & 0 & 0 & 1 & 0 \\
0 & 0 & 0 & u & 0 & 0 & v & 1
\end{pmatrix}.$$
Then we have $$\vp^{-\ld}h_{s,t,u,v}^{-1}b\sigma(h_{s,t,u,v}\vp^{\ld})=\begin{pmatrix}
0 & 0 & 0 & \sigma(t) & 0 & \vp & 0 & 0 \\
0 & 0 & 0 & 0 & 0 & 0 & 1 & 0 \\
0 & 0 & 0 & \sigma(u) & 0 & 0 & \sigma(v) & \vp \\
\vp & 0 & 0 & 0 & 0 & 0 & 0 & 0 \\
-s & 1 & 0 & 0 & 0 & 0 & 0 & 0 \\
-t & 0 & 1 & 0 & 0 & 0 & 0 & 0 \\
0 & 0 & 0 & 1 & 0 & 0 & 0 & 0 \\
-u & 0 & 0 & \frac{\sigma(s)-v}{\vp} & 1 & 0 & 0 & 0
\end{pmatrix}.$$
It is easy to check that $\tau^6\pi(X_{s_0s_7s_6s_5s_1s_0\tau^3}(b)^0)\cong \A^3$ is the locus $v=\sigma(s)$ in $I\vp^{\ld}K/K\cong \A^4$.
The locus $s=u=v=0$ and $t\neq 0$ in $\tau^6\pi(X_{s_0s_7s_6s_5s_1s_0\tau^3}(b)^0)$ is a thin Schubert cell whose list is $\{\{4,7\},\{6,7\}\}$.
Since the closure relation holds for thin Schubert cells in $K\vp^{\omega_6}K/K$, the closure of this Schubert cell intersect $\tau^6I\vp^{\chi_{2,8}^\vee}K/K$.
Indeed, the latter contains the thin Schubert cell with the list  $\{\{6,7\}\}$.
The locus $v=\sigma(s)\neq 0$ and $t=u=0$ in $\tau^6\pi(X_{s_0s_7s_6s_5s_1s_0\tau^3}(b)^0)$ is contained in a thin Schubert cell whose list is $$L\coloneqq\{\{4,7\},\{4,8\},\{5,7\},\{5,8\}\}.$$
Let $L'$ be the list of a thin Schubert cell in $\tau^6I\vp^{\chi_{1,7}^\vee}K/K,\tau^6I\vp^{\chi_{1,8}^\vee}K/K$ and $\{\tau^6K\}$.
Then it is easy to check that $L'\nsubseteq L$.
On the other hand, the closure of the locus $v=\sigma(s)\neq 0$ and $t=u=0$ in  $\tau^6\pi(X_{s_0s_7s_6s_5s_1s_0\tau^3}(b)^0)$ is projective.
By \cite[Proposition 2.11 (5)]{CV18}, this closure is contained in $$\tau^6I\vp^{\chi^{\vee}_{2,6}}K/K\sqcup\tau^6I\vp^{\chi_{1,7}^\vee}K/K\sqcup \tau^6I\vp^{\chi_{2,8}^\vee}K/K\sqcup\tau^6I\vp^{\chi_{1,8}^\vee}K/K\sqcup\{\tau^6K\}.$$
So it must intersect $\tau^6I\vp^{\chi_{2,8}^\vee}K/K$.
Thus both $\tau^6I\vp^{\chi_{1,7}^\vee}K/K$ and $\tau^6I\vp^{\chi_{2,8}^\vee}K/K$ intersect the closure of $\tau^6\pi(X_{s_0s_7s_6s_5s_1s_0\tau^3}(b)^0)$.
This combined with Lemma \ref{codim=1} imply that they are actually contained in the closure.
Note that $s_0s_7s_6s_1\tau^3\le s_0s_7s_6s_5s_1s_0\tau^3$ and $s_0s_7s_6s_5\tau^3\le s_0s_7s_6s_5s_1s_0\tau^3$.
Thus the closure relation holds.
This finishes the proof.
\end{proof}

\subsection{The cases of $\omega_1+\omega_2$ and $3\omega_1$ when $n=3,4,5$}

\begin{prop}
\label{EO 12}
Assume that $n=4,5$.
Then $\SAdm(\omega_1+\omega_2)_0$ is equal to 
\begin{align*}
&\{s_0s_3s_2s_1\tau^3,s_0s_1s_3s_0\tau^3,s_0s_3\tau^3\}\sqcup\SAdm(\omega_3)_0 &(n=4),\\
&\{s_0s_4s_3s_2s_1s_0\tau^3,s_0s_1s_4s_3s_0s_4\tau^3,s_0s_4s_3s_2\tau^3,s_0s_1s_4s_3\tau^3\}\sqcup\SAdm(\omega_3)_0 &(n=5).
\end{align*}
Also $\SAdm(3\omega_1)_0$ is equal to 
\begin{align*}
&\{s_0s_3s_2s_1s_0s_3\tau^3\}\sqcup\SAdm(\omega_1+\omega_2)_0 &(n=4),\\
&\{s_0s_4s_3s_2s_1s_0s_4s_3\tau^3\}\sqcup\SAdm(\omega_1+\omega_2)_0 &(n=5).
\end{align*}
Each $w\in\SAdm(3\omega_1)_0$ has positive Coxeter part, and $\J$ acts transitively on $\Irr X_w(\tau^3)$.
Each irreducible component of $\pi(X_w(\tau^3))(\cong X_w(\tau^3))$ is a $\J$-stratum universally homeomorphic to an affine space of dimension $\frac{\ell(w)}{2}$.
Moreover, the closure relation can be described in terms of $\cB(\J,F)$.
\end{prop}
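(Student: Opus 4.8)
Following the scheme of Propositions \ref{EO 2 odd} and \ref{EO 3 odd}, the plan is to exploit that $b=\tau^3$ is superbasic for $n=4,5$ (as $\gcd(3,n)=1$), so that the $\J$-stratification of $X_{\pc\mu}(b)$ coincides with the semi-module stratification $X_{\pc\mu}(b)=\bigsqcup_{\ld}X_{\pc\mu}^{\ld}(b)$ recalled in \S\ref{J-str}. Since $\omega_3\pc\omega_1+\omega_2\pc3\omega_1$ and these are the only dominant cocharacters of the relevant Kottwitz class lying below $3\omega_1$, we have $\SAdm(\omega_3)\subseteq\SAdm(\omega_1+\omega_2)\subseteq\SAdm(3\omega_1)$, and it suffices to pin down the elements appearing newly at each step. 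First I would enumerate the elements of $\SAdm(\omega_1+\omega_2)\cap\SW$ and $\SAdm(3\omega_1)\cap\SW$ explicitly (a finite check for $n=4,5$) and then apply the non-emptiness criterion Theorem \ref{empty}, computing the sets $\LP(w)$ via Lemma \ref{LPr}, to discard the $w$ with $X_w(b)=\emptyset$; this produces the stated lists, with $\SAdm(\omega_3)_0$ imported from the Coxeter-type case of \S\ref{J-str} when $n=4$ and from Proposition \ref{EO 2 odd} via the duality automorphism of \S\ref{J-str} when $n=5$.

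For the positive Coxeter part, for each new $w$ I would put $w$ in the normal form $x\vp^{\mu'}y$ of \S\ref{LP}, read off $p(w)=xy$, and exhibit $v\in\LP(w)$ (again via Lemma \ref{LPr}) with $v^{-1}p(w)v$ a Coxeter element of $W_0$; for the top elements $s_0s_3s_2s_1s_0s_3\tau^3$ and $s_0s_4s_3s_2s_1s_0s_4s_3\tau^3$ one has $w=\vp^{3\omega_1}c$ with $c\in W_0$ a Coxeter element, so $v=\mathrm{id}$ works at once, exactly as in the treatment of $2\omega_1$. Granting positive Coxeter part, the identification of $\SAdm(\mu)_0$, the transitivity of $\J$ on $\Irr X_w(b)$, the isomorphism $\pi(X_w(b))\cong X_w(b)\cong X_\mu^{\ld}(b)\cong\A^{\ell(w)/2}$ for a suitable $\ld$, and the decomposition $\pi(X_w(b))=\bigsqcup_{j}j\pi(Y(w))$ all follow from \cite[Proposition 6.5]{Shimada4} and \cite[Theorem 5.3]{Viehmann08}, exactly as in Proposition \ref{EO 3 odd}; so only the ``moreover'' part, the closure relation, remains.

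For the closure relation I would argue as in Proposition \ref{EO 3 odd}. After translating all $\J$-strata of $X_{\pc\mu}(b)$ by suitable powers of $\tau$ into a fixed connected component, I would use \cite[Lemma 5.2]{Shimada4} to embed the relevant pieces into usual Grassmannians $\cG r(\omega_d)=K\vp^{\omega_d}K/K$, fix explicit affine coordinates as in the matrix computations of Proposition \ref{EO 3 odd}, and recognize each translated stratum as a (thin) Schubert cell there. The strata coming from the minuscule cocharacter $\omega_3$ are small and are handled by the earlier minuscule/Coxeter analysis; for the remaining strata the closure relation for thin Schubert cells is available because the relevant codimension is $\le 2$ (\cite[\S 1.10]{GS87}), while \cite[Proposition 2.11 (5)]{CV18} together with Lemma \ref{codim=1} pin down which strata lie in a given closure. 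The equidimensionality of $X_{\pc\mu}(b)$ forces the closure of the top stratum to be a full irreducible component with $X_{\pc\mu}(b)=\bigcup_{j}j\,\overline{\pi(Y(w_{\mathrm{top}}))}$, which settles the top stratum; reading the semi-modules $\ld$ (and their $\tau$-shifts) off $\cG r(\omega_d)$ then gives the remaining containments, and an explicit computation of $\inv_K(j,-)$ confirms that each $j\pi(Y(w))$ is a single $\J$-stratum and that the relation obtained matches the Bruhat--Tits building condition (2) of \S\ref{J-str}.

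The step I expect to be hardest is the closure relation among the strata of intermediate dimension $0<\ell(w)/2<\dim X_{\pc\mu}(b)$, where equidimensionality gives no leverage: there one genuinely needs the thin-Schubert-cell picture together with a careful bookkeeping of the semi-modules $\ld$ and of the parahoric subgroups $P_w$, and one must verify that the combinatorial condition on $\ld$ extracted from $\cG r(\omega_d)$ is equivalent, for every pair of adjacent strata, to the neighbor condition in $\cB(\J,F)$ from \S\ref{J-str}.
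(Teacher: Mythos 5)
Your proposal is correct and follows essentially the same route as the paper: the paper's proof simply cites \cite[Proposition 6.5 \& Theorem 7.2]{Shimada4} for everything except the closure relation, and handles the closure relation via equidimensionality of $X_{\pc\mu}(b)$ and the thin Schubert cell argument of Proposition \ref{EO 3 odd}, exactly as you outline. Your version just spells out more of the intermediate bookkeeping (superbasicity, the nesting $\SAdm(\omega_3)\subseteq\SAdm(\omega_1+\omega_2)\subseteq\SAdm(3\omega_1)$, and the explicit $\LP(w)$ checks) that the cited references encapsulate.
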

\begin{proof}
Except the ``moreover'' part, the proposition follows from \cite[Proposition 6.5 \& Theorem 7.2]{Shimada4}.
The ``moreover'' part follows from the equidimensionality of $X_{\pc \mu}(b)$ and the theory of thin Schubert cells similarly as the proof of Proposition \ref{EO 3 odd}.
\end{proof}

\section{The cases of $3\omega_1$, $2\omega_1+\omega_2$, $4\omega_1$ and $\omega_1+3\omega_2$}
Keep the notation above.
We assume that $n=3$.

\subsection{The case of $3\omega_1$ when $n=3$}
In this subsection, we set $\mu=3\omega_1$.
For $i,i'\in \{0,1,2\}$, let $P_{ii'}$ denote the standard parahoric subgroup of $G(L)$ corresponding to $\{s_i,s_{i'}\}$.
Let $\Omega^2_{\F_{q}}$ denote the (perfection of) $2$-dimensional Drinfeld's upper half-space over $\F_{q}$.
Then the Deligne-Lusztig variety $\{g\in G(\aFq)/B(\aFq)\mid \inv(g, \sigma(g))=s_1s_2\}$ of Coxeter type is isomorphic to $\Omega^2_{\F_{q}}$.
\begin{prop}
We have
$$\SAdm(\mu)_0=\{\tau^3,s_0\tau^3,s_0s_1\tau^3,s_0s_2\tau^3,s_0s_2s_1s_0\tau^3\}.$$
Every element of $\SAdm(\mu)_0$ has positive Coxeter part.
Set \begin{align*}
P_{\tau^2}=P_{12},\quad P_{s_0\tau^3}=P_{01}\cap P_{02},\quad P_{s_0s_1\tau^3}=P_{01},\quad P_{s_0s_2\tau^3}=P_{02},\quad P_{s_0s_2s_1s_0\tau^3}=P_{12}.
\end{align*}
For any $w\in \SAdm(\mu)_0$, there exists an irreducible component $Y(w)$ of $X_w(b)$ such that $X_w(b)=\J Y(w)$, $Y(w)\cong \pi(Y(w))$ and $$\pi(X_{w}(b))=\bigsqcup_{j\in \J/\J\cap P_{w}}j \pi(Y(w)).$$
Each $j\pi(Y(w))$ is a $\J$-stratum of $X_{\pc\mu}(b)$ with
\begin{align*}
&\pi(Y(\tau^2))\cong \{pt\},\quad \pi(Y(s_0\tau^3))\cong \mathbb P^1\setminus \mathbb P^1(\F_{q}),\quad \pi(Y(s_0s_1\tau^3))\cong \Omega^2_{\F_{q}},\\
&\pi(Y(s_0s_2\tau^3))\cong \Omega^2_{\F_{q}},\quad
\pi(Y(s_0s_2s_1s_0\tau^3))\cong \Omega^2_{\F_{q}}\times \A^1.
\end{align*}
Moreover, the closure relation can be described in terms of $\cB(\J,F)$.
\end{prop}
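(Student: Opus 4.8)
The argument runs parallel to the proof of Proposition~\ref{EO 3 even} (the case $n=6$, $\mu=\omega_3$), with one simplification: here $\tau^3=\vp 1_3$ is central, so $\J=\GL_3(F)$, and for every $w\in W_a\tau^3$ with $W_{\supp_\sigma(w)}$ finite the variety $Y(w)$ of Proposition~\ref{spherical} is a classical Deligne--Lusztig variety for the \emph{untwisted} Frobenius over $\F_q$; this accounts for the fields of definition in the statement. The first step is to pin down $\SAdm(\mu)_0$ and the positive-Coxeter-part assertion: since $\Adm(3\omega_1)$ is a small finite set, one enumerates $\SW\cap\Adm(\mu)$ and applies Theorem~\ref{empty} together with Lemma~\ref{LPr} to cut it down to the five displayed elements, checking for each that $v^{-1}p(w)v$ is a $3$-cycle in $S_3$ for a suitable $v\in\LP(w)$ (the same bookkeeping as in \cite[\S6]{Shimada4}). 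For the four elements $w\neq s_0s_2s_1s_0\tau^3$ one has $W_{\supp_\sigma(w)}$ finite, so Proposition~\ref{spherical} applies directly; combining it with (the proof of) \cite[Theorem 7.2.1]{GH15}, an explicit computation of the finite part, and \cite[Proposition 5.7]{GHN19} (to pass from $P_{\supp_\sigma(w)}$ to the stated $P_w$), one obtains $Y(w)\cong\pi(Y(w))$, the decomposition $\pi(X_w(b))=\bigsqcup_{j}j\pi(Y(w))$, and the identifications with $\{pt\}$, $\mathbb P^1\setminus\mathbb P^1(\F_q)$ and $\Omega^2_{\F_q}$, exactly as in the spherical part of Proposition~\ref{EO 3 even}.

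The main case is $w=s_0s_2s_1s_0\tau^3=\vp^{3\omega_1}(1\ 2\ 3)$, whose $\sigma$-support is all of $\tS$. Since $\tau^3$ is central and $\sigma$ is trivial on $\tS$, conjugation by $s_0$ gives $s_0w\sigma(s_0)=s_2s_1\tau^3$, with $\ell$ dropping by $2$, while $\ell(s_0w)=\ell(s_2s_1s_0\tau^3)=3$; and since $p(s_2s_1s_0)$ is the transposition $(1\ 2)$, whose support is a proper subset of $S$, Theorem~\ref{empty} gives $X_{s_2s_1s_0\tau^3}(b)=\emptyset$. Hence, by Proposition~\ref{DL method prop}(ii), $X_w(b)$ is universally homeomorphic to an $\A^1$-bundle $f\colon X_w(b)\to X_{s_2s_1\tau^3}(b)$. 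Now $X_{s_2s_1\tau^3}(b)=\bigsqcup_{j\in\J/\J\cap P_{12}}jY(s_2s_1\tau^3)$ with $Y(s_2s_1\tau^3)=\{gI\in P_{12}/I\mid g^{-1}\sigma(g)\in Is_2s_1I\}\cong\Omega^2_{\F_q}$ a Deligne--Lusztig variety of Coxeter type, contained in the Schubert cell $I(s_1s_2s_1)I/I$ by \cite[Corollary 2.5]{Lusztig76}. Setting $Y(w)=f^{-1}(Y(s_2s_1\tau^3))$, one has $Y(w)\subset I(s_1s_2s_1s_0)I/I$, and Remark~\ref{trivial} gives $Y(w)\cong\Omega^2_{\F_q}\times\A^1$. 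The $\J$-equivariance of $f$ then yields $X_w(b)=\J Y(w)$ and $\pi(X_w(b))=\bigsqcup_{j\in\J/\J\cap P_{12}}j\pi(Y(w))$, and $Y(w)\cong\pi(Y(w))$ follows as in Proposition~\ref{EO 2 even}: $X_w(b)$ is closed in $\pi^{-1}(\pi(X_w(b)))$, on which $\pi$ is proper and universally injective.

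It remains to show that each $j\pi(Y(w))$ is a single $\J$-stratum of $X_{\pc\mu}(b)$ and to determine the closure relation; both are carried out exactly as in Proposition~\ref{EO 3 even}. For the first, one checks that $\inv(j',-)$ is constant on each $jY(w)$: reducing to $j=1$ and $j'\in\J^0$, \cite[Corollary 2.5]{Lusztig76} and \cite[Proposition 5.34]{AB08} produce a ``gate'' $g\in\J^0\cap P_{12}$ with $\inv(j',y_0)=\inv(j',g)(s_1s_2s_1)$ and additive lengths for all $y_0\in Y(s_2s_1\tau^3)$, and propagating through $f$ gives $\inv(j',y)=\inv(j',g)(s_1s_2s_1)s_0$, independent of $y\in Y(w)$; an explicit computation of the $\inv_K$-invariants then separates the various strata. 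For the closure relation one describes $\overline{\pi(Y(s_0s_2s_1s_0\tau^3))}$ as a union of lower strata: for $i=1,2$ one shows that $(\J\cap P_{12})\cap j(\J\cap P_{0i})\neq\emptyset$ holds if and only if $j\pi(Y(s_0s_i\tau^3))\subset\overline{\pi(Y(w))}$, first using the irreducibility of $\pi(Y(s_0s_i\tau^3))$ and the equidimensionality of $X_{\pc\mu}(b)$ to produce one such $j_0$ and then translating by $\J\cap P_{12}$; likewise for $\pi(Y(s_0\tau^3))$ and $\pi(Y(\tau^3))$. Combined with the already established spherical closure relations, this gives the description in terms of $\cB(\J,F)$, the Bruhat--Tits building of $\GL_3(F)$ (of type $\tilde A_2$). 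I expect this last step to be the main obstacle: as in the $n=6$ case it rests on a case analysis of when two parahoric cosets meet --- that is, on the incidence combinatorics of the $\tilde A_2$-building --- together with explicit computations of cocharacters $\mu_j$ with $j\pi(Y(s_0s_i\tau^3))\subset K\vp^{\mu_j}K/K$ via \cite[(2.7.11)]{Macdonald03}; everything else is routine once the parallel with Proposition~\ref{EO 3 even} is set up.
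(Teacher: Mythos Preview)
Your proposal is correct and follows exactly the route the paper intends: the paper's own proof is the single sentence ``The proof is similar to Proposition~\ref{EO 3 even}. See also \cite{Shimada}.'' and you have unwound precisely that. The simplification you flag ($\tau^3$ central, hence $\J=\GL_3(F)$ and the reduction $s_0w s_0=s_2s_1\tau^3$ is a single step rather than two) is the only real difference from the $n=6$ argument, and you handle it correctly. One small imprecision: when you invoke Theorem~\ref{empty} for $X_{s_2s_1s_0\tau^3}(b)=\emptyset$, you should specify the length-positive element; $v=1$ is not in $\LP(s_2s_1s_0\tau^3)$, but $v=y^{-1}=(1\ 3)$ is, and $v^{-1}(1\ 2)v=s_2$ has proper support, so the conclusion stands.
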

\begin{proof}
The proof is similar to Proposition \ref{EO 3 even}.
See also \cite{Shimada}.
\end{proof}

\subsection{The cases of $2\omega_1+\omega_2$, $4\omega_1$ and $\omega_1+3\omega_2$ when $n=3$}
The following proposition follows similarly as Proposition \ref{EO 3 odd} (see also \cite{Shimada}).
\begin{prop}
Assume that $n=3$.
Then
$$\SAdm(2\omega_1+\omega_2)_0=\{s_0s_1s_2s_1\tau^4,s_0s_2s_1s_0\tau^4\}\sqcup \SAdm(2\omega_2)_0.$$
Also we have
\begin{align*}
\SAdm(4\omega_1)_0&=\{s_0s_2s_1s_0s_2s_1\tau^4\}\sqcup \SAdm(2\omega_1+\omega_2)_0,\\
\SAdm(\omega_1+3\omega_2-\omega_3)_0&=\{s_0s_1s_2s_0s_1s_2\tau^4,s_0s_1s_2s_1s_0s_1\tau^4\}\sqcup \SAdm(2\omega_1+\omega_2)_0.
\end{align*}
Each element $w$ of these sets has positive Coxeter part, and $\J$ acts transitively on $\Irr X_w(\tau^4)$.
Each irreducible component of $\pi(X_w(\tau^4))(\cong X_w(\tau^4))$ is a $\J$-stratum universally homeomorphic to an affine space of dimension $\frac{\ell(w)}{2}$.
Moreover, the closure relation can be described in terms of $\cB(\J,F)$.
\end{prop}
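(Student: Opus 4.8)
The plan is to mirror the proofs of Propositions \ref{EO 3 odd} and \ref{EO 12}, together with the explicit $\GL_3$ analysis of \cite{Shimada}. Note first that $b=\tau^4$ is superbasic, since $\gcd(4,3)=1$ and $[\tau^4]=[\tau]$ in $B(\GL_3)$; hence the semi-module description of the $\J$-stratification recalled in \S\ref{J-str} applies --- each $\J$-stratum of $X_{\pc\mu}(b)$ has the form $X_{\pc\mu}^\ld(b)=X_{\pc\mu}(b)\cap I\vp^\ld K/K$ for some $\ld\in\Y$ --- and $X_{\pc\mu}(b)$ is equidimensional; since $m=4$ in every case the Newton point of $b$ is central and the defect equals $n-1=2$, so the dimension formula of \S\ref{introduction} gives $\dim X_{\pc\mu}(b)=\frac{\ell(w)}{2}$ for a top element $w$ of the relevant list.

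First I would pin down the three equalities for $\SAdm(\mu)_0$. Since $n=3$, the affine Weyl group $W_a$ is small, so one lists $\SAdm(\mu)$ directly and then deletes the $w$ with $X_w(b)=\emptyset$ using the non-emptiness criterion Theorem \ref{empty} together with the description of $\LP(w)$ from Lemma \ref{LPr}, exactly as in the proof of Lemma \ref{EO 2 empty}. The displayed unions are consistent with $\SAdm(\mu')_0\subseteq\SAdm(\mu)_0$ for $\mu'\pc\mu$ (all the cocharacters here share $b=\tau^4$), via $2\omega_2\pc 2\omega_1+\omega_2\pc 4\omega_1$, $2\omega_1+\omega_2\pc\omega_1+3\omega_2-\omega_3$ and $\Adm(\mu')\subseteq\Adm(\mu)$; the list $\SAdm(2\omega_2)_0$ itself is already known, via the $-w_{\max}$-duality recalled before Theorem \ref{geometric structure}, from the treatment of $2\omega_1$ for $n=3$. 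Apart from the closure relation, the remaining assertions --- that each $w\in\SAdm(\mu)_0$ has positive Coxeter part, that $\J$ acts transitively on $\Irr X_w(b)$, that $\pi$ induces a universal homeomorphism $X_w(b)\to\pi(X_w(b))$, and that each irreducible component is a $\J$-stratum universally homeomorphic to $\A^{\ell(w)/2}$ --- follow from \cite[Proposition 6.5 \& Theorem 7.2]{Shimada4}, which treats the superbasic case, exactly as in the proof of Proposition \ref{EO 12}.

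It remains to establish the closure relation. By equidimensionality, the irreducible components of $X_{\pc\mu}(b)$ are exactly the closures $\overline{\pi(Y(w))}$ of the top strata. After translating a $\J$-stratum $X_{\pc\mu}^\ld(b)$ by a suitable power of $\tau$ --- which sends $X_{\pc\mu}(b)$ to itself up to a shift of $\kappa$ --- and passing to a suitable value of $\kappa$, it lands inside a single minuscule Grassmannian $\cG r(\omega_j)\cong\mathbb P^2$ ($j\in\{1,2\}$), inside which the $\J$-strata are thin Schubert cells; since for $d\le 2$ the closure of a thin Schubert cell is governed by inclusion of lists (cf.\ \cite[\S 1.10]{GS87}), I would argue exactly as in Proposition \ref{EO 3 odd}. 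Concretely, for each intermediate element $w$ one chooses coordinates $h$ on the relevant $I\vp^\ld K/K$, computes the $3\times 3$ matrix $\vp^{-\ld}h^{-1}b\sigma(h\vp^\ld)$, reads off the locus cut out by $X_{\pc\mu}^\ld(b)$ and the thin Schubert cells lying in its closure, and then combines this with the equidimensionality of $X_{\pc\mu}(b)$ and Lemma \ref{codim=1} (to upgrade ``meets the closure'' to ``is contained in the closure'') to describe $\overline{\pi(Y(w))}$ as a union of strata; matching this against the neighbor condition (2) of \S\ref{J-str} yields the claim, the relations $w\ge_{S,\sigma}w'$ being immediate from the explicit lists. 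The complete $\GL_3$ picture is also worked out in \cite{Shimada}.

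I expect the main obstacle to be the same as in Proposition \ref{EO 3 odd}: handling the strata that are neither minimal nor top-dimensional, where proving that a given semi-module stratum actually lies in the closure is not a formal consequence of list inclusion and genuinely needs the coordinate cut-out of the stratum, the thin-Schubert-cell closure relation in the relevant $\mathbb P^2$, and the codimension-one bound of Lemma \ref{codim=1}. For $n=3$ the individual matrix computations are lighter than those for $n=8$, but they have to be carried out for all three cocharacters $2\omega_1+\omega_2$, $4\omega_1$ and $\omega_1+3\omega_2-\omega_3$.
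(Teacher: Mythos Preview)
Your proposal is correct and matches the paper's own approach: the paper's proof is literally the sentence ``follows similarly as Proposition \ref{EO 3 odd} (see also \cite{Shimada})'', and your plan --- listing $\SAdm(\mu)_0$ via Theorem \ref{empty} and Lemma \ref{LPr}, invoking \cite[Proposition 6.5 \& Theorem 7.2]{Shimada4} for the superbasic structural claims, and handling the closure relation through equidimensionality plus the thin Schubert cell argument --- is exactly this. One small caveat: for the non-minuscule $\mu$ here not every $\kappa$-component lands in a single $\cG r(\omega_j)\cong\mathbb P^2$, so the thin Schubert cell step has to be carried out stratum by stratum (or read off from \cite{Shimada}) rather than inside a single minuscule Grassmannian.
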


\section{The cases of $m\omega_1\ \text{with $m\in \Z_{>0}$}$}
Keep the notation above.
In this section, we set $\mu=m\omega_1$ and $b=\tau^m$ with $m\in \Z_{>0}$.
We assume that $n=2$.
Then the $F$-rank of $\J=\J_b$ is $0$ (resp.\ $1$) if $m$ is odd (resp.\ even).

\subsection{The cases of $m\omega_1\ \text{with $m\in \Z_{>0}$}$ when $n=2$ and $m$ is odd}
Assume that $m$ is odd.
For any $1\le k\le \frac{m-1}{2}(=\dim X_{\mu}(b))$, set
$$\ld_k=
\begin{cases}
\frac{k+1}{2}\chi_{1,2}^\vee & (k\ \text{odd})\\
\frac{k}{2}\chi_{2,1}^\vee & (k\ \text{even}).
\end{cases}$$
We also set $\ld_0=\omega_0$.
The following proposition follows similarly as Proposition \ref{EO 2 odd} (see also \cite{Ivanov13}).
\begin{prop}
We have
$$\SAdm(\mu)=\{\tau^m, s_0s_1\tau^m, s_0s_1s_0s_1\tau^m,\ldots, s_0s_1\cdots s_0s_1\tau^m\},$$
where the last element has length $m-1$.
Every element of $\SAdm(\mu)_0$ has positive Coxeter part.
For $0\le k\le \frac{m-1}{2}$, let $w_k$ denote the unique element in $\SAdm(\mu)_0$ of length $2k$.
Then there exists an irreducible component $Y(w_k)$ of $X_{w_k}(b)$ such that $X_{w_k}(b)=\J Y(w_k)$, $Y(w_k)\cong\pi(Y(w_k))=X_{\mu}^{\ld_k}(b)\cong \A^k$ and
$$\pi(X_{w_k}(b))=\bigsqcup_{j\in \J/\J\cap I}j X_{\mu}^{\ld_k}(b).$$
Moreover, the closure relation can be described in terms of $\cB(\J,F)$.
\end{prop}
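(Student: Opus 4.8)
The plan is to follow the template already established for the case $\mu = \omega_2$ (Propositions~\ref{EO 2 odd} and~\ref{EO 2 even}), specialized to the much simpler setting $n=2$, where $\tW$ is the infinite dihedral group generated by $s_0,s_1$ and $\Omega = \langle \tau\rangle$ acts by swapping $s_0 \leftrightarrow s_1$. First I would identify $\SAdm(\mu)$ explicitly: since $\mu = m\omega_1$ with $m$ odd, $\vp^\mu = s_1\tau^m$ up to sign conventions, and the admissible set inside $\SW$ is the chain $\tau^m, s_0s_1\tau^m, s_0s_1s_0s_1\tau^m, \ldots$ terminating at the element of length $m-1$; this is a routine length computation using the formula for $\ell$ from \S\ref{notation}. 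For the non-emptiness, one checks via Theorem~\ref{empty} that $\supp_\sigma(w) = \tS$ for every $w$ of positive length in this chain (so $W_{\supp_\sigma(w)}$ is infinite but condition (ii) fails because $\LP(w)$ is small enough), hence $\SAdm(\mu)_0 = \SAdm(\mu)$; the $w=\tau^m$ case is the basic stratum which is automatically nonempty.

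Next I would verify the positive Coxeter part claim: for $w_k = (s_0s_1)^k\tau^m$ we have $p(w_k) = (s_1s_0)^k p(\tau^m)\cdots$, and since $n=2$ the finite Weyl group $W_0 = \{1, s_1\}$ has Coxeter elements equal to $s_1$, so one must exhibit $v\in\LP(w_k)$ with $v^{-1}p(w_k)v = s_1$; this is immediate because $p(w_k)$ is already a transposition or identity and $\LP(w_k)$ contains at least $y^{-1}$ by the general discussion in \S\ref{LP}. Then the structure of $X_{w_k}(b)$ follows from the cited \cite[Proposition 6.5]{Shimada4}: because $b=\tau^m$ is superbasic, the semi-module stratification applies, so each $\J$-stratum of $X_\mu(b)$ is some $X_\mu^\ld(b) = X_\mu(b)\cap I\vp^\ld K/K$, and the Deligne-Lusztig reduction (Proposition~\ref{DL method prop}) together with the explicit reduction path $(s_0s_1)^k\tau^m \to \cdots \to \tau^m$ shows $X_{w_k}(b)$ is an iterated $\A^1$-fibration over the point $X_{\tau^m}(b)$, hence $\cong \A^k$; transitivity of the $\J$-action and $Y(w_k)\cong \pi(Y(w_k)) = X_\mu^{\ld_k}(b)$ come from Proposition~\ref{spherical} applied with $\supp_\sigma(\tau^m)$ corresponding to the Iwahori (so $P_{w_k} = I$, $\J\cap P_{w_k} = \J^0$) and the identification $\J/\J^0 \cong \Z$.

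For the closure relation, the argument in Proposition~\ref{EO 2 odd} relies on equidimensionality of $X_\mu(b)$ plus an induction/embedding trick into a larger Grassmannian; but when $n=2$ the situation is simpler because $X_\mu(b)$ itself is equidimensional of dimension $\frac{m-1}{2}$ (by the general dimension formula quoted in \S\ref{ADLV}), and $\overline{X_\mu^{\ld_{(m-1)/2}}(b)} = X_\mu(b)^0$ forces the full chain of closures $\overline{X_\mu^{\ld_k}(b)} = \bigsqcup_{k'\le k} X_\mu^{\ld_{k'}}(b)$ by dimension count alone, since each $X_\mu^{\ld_k}(b) \cong \A^k$ and there is a unique stratum of each dimension in each Kottwitz component. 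The neighbor condition in $\cB(\J,F)$ translates, for $\J$ of $F$-rank $0$, into consecutive $\tau$-shifts, which matches the $\Z$-indexing of strata within $\J X_\mu^{\ld_k}(b) = \bigsqcup_{l\in\Z} X_\mu^{\tau^l\ld_k}(b)$; this is exactly the combinatorics of the tree/apartment for $\mathrm{PGL}_2$, so the description in terms of $\cB(\J,F)$ is direct. The main obstacle, as in the $\omega_2$ case, is making the closure relation precise — specifically checking that the boundary of each affine-space stratum meets exactly the predicted lower strata and no others — but here the low rank removes the need for the auxiliary Grassmannian embedding and the whole verification reduces to a dimension argument combined with the explicit form of $\ld_k$, so I expect it to go through essentially as in \cite{Ivanov13}.
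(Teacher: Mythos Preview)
Your approach is the same as the paper's, which simply says the result follows as in Proposition~\ref{EO 2 odd} (see also \cite{Ivanov13}). There is, however, one gap: your claim that $\SAdm(\mu)$ coincides with the even-length chain $\{\tau^m, s_0s_1\tau^m, \ldots, (s_0s_1)^{(m-1)/2}\tau^m\}$, and hence that $\SAdm(\mu)_0 = \SAdm(\mu)$, is not correct. The odd-length elements $s_0\tau^m,\ s_0s_1s_0\tau^m,\ \ldots,\ (s_0s_1)^{(m-1)/2}s_0\tau^m = \vp^\mu$ also lie in $\SW$ and in $\Adm(\mu)$, hence in $\SAdm(\mu)$. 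The set displayed in the proposition is really $\SAdm(\mu)_0$ (compare the parallel proposition for even $m$, where the subscript $0$ is present), and the missing step is to show that these odd-length elements contribute empty varieties. This follows at once from Theorem~\ref{empty}: for $w = (s_0s_1)^k s_0 \tau^m$ one has $p(w) = 1$ (since $p(s_0) = p(s_1) = s_1$ and $p(\tau^m) = s_1^m = s_1$ for $m$ odd), so $v^{-1}p(w)v = 1$ has empty support for every $v$, while $\supp_\sigma(w) = \tS$ makes $W_{\supp_\sigma(w)}$ infinite. With this correction in place, the remainder of your outline goes through.
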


\subsection{The cases of $m\omega_1\ \text{with $m\in \Z_{>0}$}$ when $n=2$ and $m$ is even}
Assume that $m$ is even.
For $i\in \{0,1\}$, let $P_i$ denote the standard parahoric subgroup of $G(L)$ corresponding to $\{s_i\}$.
The following proposition follows similarly as the results in \S \ref{EO 2 even} (see also \cite{Ivanov13}).
\begin{prop}
We have
$$\SAdm(\mu)_0=\{\tau^m, s_0\tau^m,s_0s_1s_0\tau^m,\ldots, s_0s_1s_0\cdots s_1s_0\tau^m\},$$
where the last element has length $m-1$.
Every element of $\SAdm(\mu)_0$ has positive Coxeter part.
For $1\le k\le \frac{m}{2}$, let $w_k$ denote the unique element in $\SAdm(\mu)_0$ of length $2k-1$.
Set $w_0=\tau^m$.
Then there exists an irreducible component $Y(w_k)$ of $X_{w_k}(b)$ such that $X_{w_k}(b)=\J Y(w_k)$, $Y(w_k)\cong\pi(Y(w_k))\cong (\mathbb P^1\setminus \mathbb P^1(\Fq))\times \A^{k-1}$ and
$$\pi(X_{w_k}(b))=\bigsqcup_{j\in \J/\J\cap P_{w_k}}j \pi(Y(w_k)).$$
Here $P_{w_k}=P_0$ (resp.\ $P_1$) if $k$ is odd (resp.\ even).
Moreover, the closure relation can be described in terms of $\cB(\J,F)$.
\end{prop}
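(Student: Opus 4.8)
The plan is to transcribe the arguments used for $\mu=\omega_2$ with even $n$ in \S\ref{EO 2 even} to the case $n=2$, where everything simplifies: since $\tau^m=\vp^{(m/2,m/2)}$ is central, $b=\tau^m$ is central, $\J=\GL_2(F)$, the building $\cB(\J,F)$ is the $(q+1)$-regular tree, and $W_0=\{1,s_1\}$ so that its unique non-trivial element is its own Coxeter element; one may also simply invoke \cite{Ivanov13}. First I would describe $\SAdm(\mu)$: in the infinite dihedral group $W_a$ of type $\tilde A_1$ there is exactly one element of each length in each $W_a$-coset of $\tW$, so $\SAdm(\mu)$ consists of the alternating words $s_0s_1s_0\cdots\tau^m$ of length $\ell$ for $0\le\ell\le m$ (the top one being $\vp^{(0,m)}$). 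The map $p$ sends each of $s_0,s_1$ to the non-trivial element of $W_0$ and $p(\tau^m)=1$ as $m$ is even, so $p(s_0s_1\cdots\tau^m)=1$ exactly when $\ell$ is even. For even $\ell\ge 2$ one has $\supp_\sigma=\tS$ with $W_{\tS}$ infinite, so Theorem \ref{empty} (with $v=1$) gives $X(b)=\emptyset$; for $\ell=0$ the element $\tau^m$ has $X_{\tau^m}(b)\ne\emptyset$ trivially; and for odd $\ell$ one has $p(w)\ne 1$, and a non-trivial conjugate of $p(w)$ is never trivial, so condition (ii) of Theorem \ref{empty} cannot hold and $X_w(b)\ne\emptyset$. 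Hence $\SAdm(\mu)_0=\{w_0,\ldots,w_{m/2}\}$ with $w_0=\tau^m$ and $\ell(w_k)=2k-1$; and each $w_k$ has positive Coxeter part because for $k\ge1$ the element $p(w_k)$ is the Coxeter element of $W_0$ while $y^{-1}\in\LP(w_k)$, and $w_0$ has length $0$.

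Next I would analyse the geometry of $X_{w_k}(b)$ by Deligne-Lusztig reduction. Conjugating $w_k=s_0s_1s_0\cdots s_0\tau^m$ by $s_0$ drops the length by $2$, producing edges $w_k\rightharpoonup s_0w_k$ of type I ($\ell=2k-2$) and $w_k\rightharpoonup s_0w_ks_0$ of type II ($\ell=2k-3$); since $\ell(s_0w_k)$ is even, $p(s_0w_k)=1$ and $\supp(s_0w_k)=\tS$, so $X_{s_0w_k}(b)=\emptyset$ by Theorem \ref{empty} and this branch drops out of the decomposition of Proposition \ref{decomposition}. Iterating (the analogue of Corollary \ref{EO 2 Gm} and the proof of Lemma \ref{EO 2 Iwahori}) gives a reduction path with $k-1$ type II edges from $w_k$ down to the length-$1$ endpoint $s_0\tau^m$ (if $k$ is odd) or $s_1\tau^m$ (if $k$ is even), so by Proposition \ref{DL method prop}, $X_{w_k}(b)$ is $\J$-equivariantly universally homeomorphic to an iterated $\A^1$-bundle over $X_{s_i\tau^m}(b)$. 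As $s_i\tau^m$ is of minimal length in its $\sigma$-conjugacy class, Proposition \ref{spherical} gives $X_{s_i\tau^m}(b)=\bigsqcup_{j\in\J/\J\cap P_i}jY(s_i\tau^m)$ with $Y(s_i\tau^m)=\{gI\in P_i/I\mid g^{-1}\tau^m\sigma(g)\tau^{-m}\in Is_iI\}$ the rank-one Coxeter Deligne-Lusztig variety in $P_i/I\cong\mathbb P^1$, which, $\tau^m$ being central, is the Drinfeld curve $\mathbb P^1\setminus\mathbb P^1(\Fq)$. Taking $Y(w_k)$ to be the preimage of $Y(s_i\tau^m)$ and tracking Schubert cells as in Remark \ref{trivial} and the proof of Lemma \ref{EO 2 Iwahori} then yields $Y(w_k)\cong(\mathbb P^1\setminus\mathbb P^1(\Fq))\times\A^{k-1}$, the decomposition $X_{w_k}(b)=\bigsqcup_{j\in\J/\J\cap P_{w_k}}jY(w_k)$ with $P_{w_k}=P_0$ or $P_1$ according to the parity of $k$, and each $jY(w_k)$ contained in a single $\J$-stratum of $\cF l$.

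Finally I would pass to $\cG r$ as in the proof of Proposition~\ref{EO 2 even}: the analogue of Corollary \ref{EO 2 Sw} (here $S_w=\emptyset$ for every admissible $w\ne\tau^m$ of positive length) together with \cite[Proposition 3.1.1]{GH15} shows $\pi^{-1}(\pi(X_{w_k}(b)))\cap\bigcup_{w'\le w_k}X_{w'}(b)=X_{w_k}(b)$, so $X_{w_k}(b)\to\pi(X_{w_k}(b))$ is proper and, by \cite[Lemma 2.1]{Shimada4}, universally bijective, hence a universal homeomorphism, giving $\pi(X_{w_k}(b))=\bigsqcup_j j\pi(Y(w_k))$ with the stated homeomorphism type; and the ``gate'' argument of \cite[Corollary 2.5]{Lusztig76} and \cite[Proposition 5.34]{AB08}, as in the proof of Lemma \ref{EO 2 Iwahori}, shows that $\inv(j,-)$ and hence $\inv_K(j,-)$ is constant on each $j'\pi(Y(w_k))$ and separates the strata, so each $j\pi(Y(w_k))$ is exactly a $\J$-stratum of $X_{\pc\mu}(b)$. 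For the closure relation, $\overline{\pi(Y(w_{m/2}))}$ has dimension $m/2=\dim X_{\pc\mu}(b)$ (by the dimension formula), hence is an irreducible component, and by equidimensionality $X_{\pc\mu}(b)$ is the union of its $\J$-translates; computing, for each $j\in\J^0$ and each $k$, the Iwahori orbit $I\vp^\ld K/K$ containing $j\pi(Y(w_k))$ — a short explicit calculation with $2\times2$ matrices and lattices, carried out in \cite{Ivanov13}, or obtained along the lines of the proof of Proposition~\ref{EO 2 even} — one checks that $\overline{j\pi(Y(w_k))}\supseteq j'\pi(Y(w_{k'}))$ holds exactly when the corresponding vertices of $\cB(\J,F)$ can be joined by a gallery, i.e.\ exactly the chain condition of \S\ref{J-str}. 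Everything except this last verification is a direct transcription of \S\ref{EO 2 even}; the closure-relation computation is the one genuinely computational point and hence the main obstacle, but for $n=2$ it is routine, amounting to bookkeeping of relative positions in the tree.
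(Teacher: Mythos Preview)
Your proposal is correct and follows exactly the approach indicated by the paper, which simply states that the result ``follows similarly as the results in \S\ref{EO 2 even} (see also \cite{Ivanov13})''; you have carried out precisely that transcription, with the simplifications coming from $\tau^m$ being central. One small slip: the length-$m$ element of $\SAdm(\mu)$ is $(s_0s_1)^{m/2}\tau^m=\vp^{(m,0)}$, not $\vp^{(0,m)}$ (the latter begins with $s_1$), but this is parenthetical and does not affect your argument.
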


\bibliographystyle{myamsplain}
\bibliography{reference}
\end{document}